\newcommand{\N}{{\mathbb N}}
\newcommand{\R}{{\mathbb R}}
\newcommand{\sR}{{\mathcal R}}
\newcommand{\inv}{^{-1}}
\newcommand{\supp}{\text{supp}}
\newcommand{\ep}{\varepsilon}
\newcommand{\Rm}{\text{Rm}}
\newcommand{\Rc}{\text{Rc}}
\newcommand{\Vol}{\text{Vol}}
\newcommand{\inj}{\operatorname{inj}}
\newcommand{\AVR}{\operatorname{AVR}}
\newcommand{\RNum}[1]{\uppercase\expandafter{\romannumeral #1\relax}}
\newcommand{\Sob}{\operatorname{Sob}}
\let\epsilon\varepsilon
\newcommand{\be}{\begin{equation}}
\newcommand{\ee}{\end{equation}}
\newcommand{\ba}{\begin{align*}}
\newtheorem{thm}{Theorem}[section]
\newtheorem{lem}[thm]{Lemma}
\newtheorem{claim}[thm]{Claim}
\newtheorem{prop}[thm]{Proposition}
\newtheorem{cor}[thm]{Corollary}
\theoremstyle{definition}
\newtheorem{remark}[thm]{Remark}
\newtheorem{question}[thm]{Question}
\def\mod#1{{\ifmmode\text{\rm\ (mod~$#1$)}
\else\discretionary{}{}{\hbox{ }}\rm(mod~$#1$)\fi}}
\begin{document}

\setlength\parindent{0pt}

\vspace{-3em}

        \author[A. Martens]{Adam Martens}
        \address{Department of Mathematics, The University of British Columbia,
1984 Mathematics Road, Vancouver, B.C.,  Canada V6T 1Z2}
\email{martens@math.ubc.ca}

        \bibliographystyle{amsplain}
\title{Sharpening a gap theorem: nonnegative Ricci and small curvature concentration}

\maketitle

\pagestyle{plain}

\vspace{-3em}

\begin{abstract}
We sharpen a gap theorem of Chan \& Lee \cite{ChanLee} for nonnegative Ricci curvature manifolds that have positive asymptotic volume ratio and small enough scale-invariant integral curvature (so-called ``curvature concentration"), by showing that the curvature concentration need only depend linearly on the asymptotic volume ratio. We prove the result by exhibiting a long-time Ricci flow solution with faster than $1/t$ curvature decay, which allows us to shift the limiting contradiction argument to time infinity and thus obtain an explicit bound on the size of the gap. 
\end{abstract}

\section{Introduction}

The Ricci flow, introduced by Hamilton \cite{Hamilton1}, deforms an initial Riemannian metric $g(0)=g_0$ on a $n$-dimensional manifold $M^n$ according to the equation
\[
\frac{\partial }{\partial t}g(t)=-2\Rc_{g(t)}.
\]

A central aspect of Ricci flow analysis is the tendency for the flow to roughly preserve the geometry of almost Euclidean regions, despite the manifold's global geometry being possibly not well understood. This is the so-called ``pseudolocality" of the Ricci flow. Although the sense in which a region is ``almost Euclidean" may differ, the general arguments of pseudolocality theorems often follow this outline:

\begin{enumerate}[1.]
\item Construct a sequence of Ricci flows which are becoming more ``locally Euclidean" but for which some undesirable behaviour occurs (often of the form of large curvature).
\item Normalize the metrics to focus on the local region where the undesirable behavior occurs. 
\item Analyze the sequence of normalized metrics as they approach a limiting space, wherein a contradiction is apparent. 
\end{enumerate} 

We will refer to this method of proof (where a contradiction is derived in the limiting space) as a ``limiting contradiction argument". A notable aspect of such arguments is that they often involve constants that are not explicitly computable due to the nature of the contradiction appearing in the limit space instead of at a specific moment along the sequence. 

\vspace{5pt}

One of the reasons why pseudolocality theorems are important is that they relay geometric and topological information in the local region. In particular, if the sense in which a manifold $M$ is ``almost Euclidean" is invariant under scaling and is satisfied everywhere on $M$, then one can apply the result to a sequence of blow-downs of the initial metric, thus providing information about the manifolds global geometric structure. Consider for example the following notable result of Chan \& Lee.

\vspace{5pt}

\begin{thm}{\cite[Theorem 1.1]{ChanLee}}\label{ChanLeeGap}
For any $n\geq 4$ and $v>0$ there exists some $\ep$, depending on $n$ and $v$, such that if $(M^n,g)$ is complete with nonnegative Ricci curvature $\Rc_{g}\geq 0$, asymptotic volume ratio satisfying
\[
\lim_{r\to \infty} \frac{\Vol_g B_g(x_0, r)}{\omega_n r^n}\geq v,
\]
and globally small scale-invariant integral curvature (curvature concentration)
\be\label{chanleesmallcurv}
\int_M |\text{Rm}|_g^{\frac{n}{2}}\,dV_g\leq \ep(n,v),
\ee
then $(M^n,g)$ is isometric to $(\R^n, g_E)$. 
\end{thm}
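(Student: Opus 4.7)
The plan is to deform $g_0$ by Ricci flow and exploit the scale invariance of hypothesis \eqref{chanleesmallcurv} by analyzing the flow's long-time behavior, following the strategy outlined in the abstract.

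First I would establish a long-time Ricci flow $g(t)$, $t \in [0,\infty)$, with $g(0) = g_0$. Nonnegativity of $\Rc$ is preserved along the flow, and the scale-invariant $L^{n/2}$ curvature bound together with volume non-collapsing from $\AVR \geq v$ should yield an $\epsilon$-regularity type local smoothing estimate that converts the integral smallness into short-time pointwise curvature bounds. Iterating this smoothing, and observing that the assumptions are scale invariant and hence persist under parabolic rescaling, should give existence on all of $[0,\infty)$ together with preservation of $\Rc \geq 0$ and $\AVR \geq v$.

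Next I would upgrade the baseline pseudolocality bound $|\Rm|_{g(t)} \leq C(n,v)/t$ to a strictly faster decay $|\Rm|_{g(t)} \leq C(n,v)\, t^{-\alpha}$ with $\alpha > 1$. The key input is that under $\Rc \geq 0$ the volume form is non-increasing along the flow, while parabolic smoothing forces the global scale-invariant integral $\int_M |\Rm|^{n/2}\,dV_{g(t)}$ itself to decay in $t$. Substituting a time-decaying integral bound back into the $\epsilon$-regularity estimate would then yield the improved pointwise decay.

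With this decay in hand, I would perform a blow-down at time infinity: set $\tilde{g}_s(t) := s^{-1} g(st)$ and send $s \to \infty$. These are Ricci flows on $M$ with $\Rc_{\tilde{g}_s} \geq 0$, with $\AVR_{\tilde{g}_s} = \AVR_{g_0} \geq v$ by scale invariance and the flow's preservation of AVR, and with $|\Rm|_{\tilde{g}_s(1)} \leq C\, s^{\,1-\alpha} \to 0$. A Cheeger--Hamilton compactness argument, applicable thanks to non-collapsing, extracts a smooth pointed limit that is a complete flat Ricci flow with $\AVR \geq v > 0$, hence isometric to $(\R^n, g_E)$. Since AVR is scale invariant, this forces $\AVR(g_0) = \omega_n$, and Cheeger--Colding volume rigidity for nonnegative Ricci curvature then implies that $(M, g_0)$ itself is isometric to $(\R^n, g_E)$. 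Because the rate of approach to the flat limit is controlled by an explicit power of $s$, the ``limiting contradiction" is realized at a quantitatively controlled blow-down rather than as an abstract compactness argument, which is what produces explicit constants and allows the sharpened linear dependence of $\epsilon$ on $v$ to be tracked.

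The main anticipated obstacle is the second step: producing curvature decay strictly faster than $1/t$. Standard pseudolocality yields only $O(1/t)$, and improving the exponent requires a careful interplay between the evolution of the global $L^{n/2}$ integral under nonnegative Ricci curvature and the localized $\epsilon$-regularity estimate, with constants kept explicit enough to yield the linear-in-$v$ bound highlighted in the paper's sharpening.
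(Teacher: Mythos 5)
Your overall skeleton matches the paper's: run a long-time Ricci flow, get curvature decay strictly faster than $1/t$, blow down at time infinity to a flat limit with positive asymptotic volume ratio, and conclude via volume rigidity. However, three of your intermediate claims are genuine gaps.

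First, ``nonnegativity of $\Rc$ is preserved along the flow'' is false for $n\geq 4$, and the paper explicitly avoids relying on it: only the lower \emph{scalar} curvature bound is preserved, and the non-collapsing is instead propagated through the equivalence of volume ratios, $L^2$-Sobolev inequalities, and $\bar\nu$-functional control (Section \ref{sectionsobolev}), the latter being almost-monotone along the flow. This also undercuts your later appeal to ``the flow's preservation of AVR'': once $\Rc\geq 0$ is lost at positive times, AVR of $g(t)$ is not obviously well-defined or constant, so the step ``the blow-down limit is $\R^n$, hence $\AVR(g_0)=1$'' does not follow as stated. The paper closes this loop differently: it transfers the near-Euclidean volume of the limit back to $t=0$ using the Shrinking Balls Lemma together with the monotonicity $\partial_t\,dV_{g(t)}=-\sR_{g(t)}dV_{g(t)}\leq 0$, obtaining $\Vol_g B_g(x_0,r)/r^n\to\omega_n$ and then invoking Bishop--Gromov rigidity for the \emph{initial} metric.

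Second, your claimed decay $|\Rm|_{g(t)}\leq C t^{-\alpha}$ with $\alpha>1$ overreaches: the paper emphasizes that it obtains only $t\,|\Rm|_{g(t)}\to 0$ with \emph{no rate}, and the mechanism you propose (that the global $\int_M|\Rm|^{n/2}$ itself decays in $t$) is not established anywhere --- the flow constructed in the paper only keeps this integral bounded up to a dimensional constant, and the faster-than-$1/t$ decay is imported from a separate compactness-based result applied to the bounded-curvature metric $g(1)$. Fortunately $o(1/t)$ suffices for the blow-down, so this overreach is repairable, but as written your second step has no proof. Finally, note that the real technical weight of the paper is in producing the long-time flow at all for initial data of possibly unbounded curvature (approximation by complete bounded-curvature metrics preserving the Sobolev constant and local curvature concentration); your ``$\epsilon$-regularity plus iteration'' summary is the right intuition but is where most of the work actually lives.
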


\vspace{5pt}

The proof given in \cite{ChanLee} is a limiting contradiction argument wherein they fix some $v>0$ and then assume there is a sequence of nonnegative Ricci curved $(M_i^n,g_i)$, each with asymptotic volume ratio at least $v$ and integral curvatures satisfying \eqref{chanleesmallcurv} with $\ep_i\to 0$, which results in a contradiction in the limit. This raises two questions: 1. For any fixed $v>0$, precisely how small does $\ep(v)$ need to be to lie within the desired gap? 2. Given two distinct $v_1, v_2>0$, how does $\ep(v_1)$ relate to $\ep(v_2)$? The purpose of this paper is to provide an answer to both of these questions. We prove the following. 

\vspace{5pt}

\begin{thm}\label{main}
For $n\geq 4$, there exists a dimensional constant $\ep(n)  \geq (10n)^{-10n^3}$ such that the following holds. Let $(M^n,g)$ be any complete Riemannian manifold (not necessarily bounded curvature) with nonnegative Ricci curvature $\Rc_{g}\geq 0$. Suppose that $(M^n,g)$ has positive asymptotic volume ratio
\[
\lim_{r\to \infty} \frac{\Vol_g B_g(x_0, r)}{\omega_n r^n}\geq v>0,
\]
and curvature concentration satisfying
\be\label{linearrelationshipmainthm}
\int_M |\text{Rm}|_g^{\frac{n}{2}}\,dV_g\leq \ep(n) v.
\ee
Then $(M^n,g)$ is isometric to $(\R^n, g_E)$. 
\end{thm}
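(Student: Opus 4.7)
The plan is to run a Ricci flow starting from $g_0$ and analyze its long-time behaviour, shifting the limiting contradiction argument from the initial scale to $t=\infty$ and tracking every constant along the way.

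First, I would establish long-time existence of a Ricci flow $g(t)$ on $M^n$ starting from $g_0$. Since bounded curvature is not assumed, this requires invoking recent existence results for Ricci flow on complete noncompact manifolds with only integral curvature control (for example, Hochard--Simon--Topping type constructions), using the smallness of $\int_M |\Rm|^{n/2}\,dV$ together with the Euclidean volume growth to guarantee that the flow exists for all $t\in[0,\infty)$. Along the flow I would verify three preserved properties: (i) nonnegativity of Ricci curvature, by a maximum principle in the noncompact setting (again using the smallness/volume-growth hypotheses to justify), (ii) the asymptotic volume ratio remains $\geq v$ (monotonicity under nonnegative Ricci and Ricci flow), and (iii) $\int_M |\Rm|_{g(t)}^{n/2}\,dV_{g(t)}$ is non-increasing in $t$.

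Second, I would combine (ii) with an $\epsilon$-regularity/smoothing estimate for Ricci flow (of the type giving $\sup_{B(x,\sqrt{t}/2)}|\Rm|\leq C(n)/t$ whenever $\int_{B(x,\sqrt{t})} |\Rm|^{n/2}$ is below a dimensional threshold and the ball is noncollapsed) to upgrade the usual $|\Rm|_{g(t)}\leq C(n,v)/t$ bound to something of the form
\[
|\Rm|_{g(t)}(x)\;\leq\;\frac{C(n)}{v^{\alpha}}\Bigl(\int_M |\Rm|_{g(t)}^{n/2}\,dV_{g(t)}\Bigr)^{\!2/n}\frac{1}{t}\;\leq\;\frac{C(n)(\epsilon v)^{2/n}}{v^{\alpha}\,t}.
\]
With the linear hypothesis $\int|\Rm|^{n/2}\leq\epsilon\cdot v$, the factor $(\epsilon v)^{2/n}/v^{\alpha}$ reduces to $\epsilon^{2/n}$ times a purely dimensional constant (this is precisely where the linear relationship between $\epsilon$ and $v$ enters, matching the statement of Theorem~\ref{main}), so the curvature decays faster than the barrier $1/t$ allowed by AVR alone.

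Third, with this faster-than-$1/t$ decay, $\int_0^\infty \sup_M |\Rm|_{g(t)}\,dt$ is either convergent or small enough on bounded scales to control distance distortion of the flow via Hamilton's distance-change inequality, so $g(t)$ and $g_0$ remain uniformly equivalent on compact sets. Rescaling the flow at large times by $\lambda_t=1/t$ and passing to a pointed Cheeger--Gromov subsequence yields an ancient, complete, Ricci-nonnegative flow with AVR $\geq v$ and zero curvature, hence isometric to Euclidean space. Because the decay is quantitative rather than extracted by compactness, the identification with $(\R^n,g_E)$ happens at an explicit scale depending only on $n$, and pulling back through the distance estimate shows $g_0$ itself is flat.

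The main obstacle, and the source of the explicit bound $\epsilon(n)\geq (10n)^{-10n^3}$ in Theorem~\ref{main}, will be making every constant above dimensionally explicit: the existence time of the flow, the preservation of AVR with a quantitative rate, the $\epsilon$-regularity constant, and the distance-distortion constant. The subtle point is the interplay between the $\epsilon$-regularity (which wants the $L^{n/2}$ curvature in a parabolic ball to be small) and the noncollapsing estimate (which provides the volume lower bound necessary to conclude pointwise smallness of $|\Rm|$), and arranging that both can be iterated in time while maintaining a purely \emph{dimensional} threshold on $\epsilon$. All other steps are, in principle, quantitative consequences of already-existing machinery.
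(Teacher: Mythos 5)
Your high-level strategy --- run a long-time flow, get faster-than-$1/t$ curvature decay, and blow down at $t=\infty$ --- is indeed the skeleton of the paper's argument, but several of the individual steps you rely on are false or unsupported as stated. Most seriously: (1) nonnegative Ricci curvature is \emph{not} preserved by Ricci flow when $n\geq 4$, so your property (i) is unavailable; the paper works only with the preserved condition $\sR_{g(t)}\geq 0$, and recovers the volume lower bound (iv) of Theorem \ref{main2} not from Ricci nonnegativity along the flow but from preservation of a Sobolev inequality via the monotonicity of the local $\nu$-functional. (2) Your property (iii), monotonicity of $\int_M|\Rm|_{g(t)}^{n/2}\,dV_{g(t)}$, is not true; the evolution inequality (Lemma \ref{curvconcevolutionlem}) has positive reaction and cutoff terms, and the best one gets is preservation up to a multiplicative constant $\Lambda(n,\lambda)$, which is exactly why the paper's $\delta(n,\lambda)$ carries a factor of $\Lambda^{-1}$. (3) Your quantitative $\ep$-regularity claim $|\Rm|_{g(t)}\leq C(n)\ep^{2/n}/t$ is both unestablished and, even if granted, insufficient: a small constant times $1/t$ is not $o(1/t)$, so the rescaled limits at $t_i\to\infty$ would have small but possibly nonzero curvature, and your step 3 conclusion of a flat limit does not follow. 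The paper explicitly forgoes any rate and obtains genuine $t|\Rm|_{g(t)}\to 0$ by applying the (non-quantitative, limiting) decay theorem of \cite{CM} to the bounded-curvature slice $g(1)$. Relatedly, $\int_0^\infty \sup_M|\Rm|_{g(t)}\,dt$ diverges under a bound of the form $c/t$, so the distance-distortion control you invoke fails; the paper instead closes the argument at time zero via the Shrinking Balls Lemma and Bishop--Gromov rigidity applied to $\AVR(g)=1$.

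Two further gaps concern precisely the points where the explicit bound $\ep(n)\geq(10n)^{-10n^3}$ and the linear dependence on $v$ are actually earned. First, long-time existence from an unbounded-curvature initial metric is not a black box here: the bulk of the paper is an approximation of blow-downs of $(M,g)$ by complete bounded-curvature metrics (Theorem \ref{CHLmainthm}, Proposition \ref{approximationunboundedthm}) combined with a short-time existence theorem (Theorem \ref{shorttimeexistence}) whose content is that the \emph{local} curvature concentration and the \emph{local} Sobolev constant are preserved up to dimensional factors on a time interval independent of the curvature bound. Second, the linear relationship $\int|\Rm|^{n/2}\leq\ep(n)v$ hinges on the sharp translation $\AVR(g)\geq v\Rightarrow\Sob(M,g)\leq[C(n)v^{-2/n},0]$ (Lemma \ref{SobfromAVR2}, built on Brendle's sharp Sobolev inequality); the standard route through Wang's local entropy bound only gives a Sobolev constant of order $v^{-2(n+1)/n}$, which would destroy the linear dependence. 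Your proposal assumes this sharp translation implicitly without identifying it as the crux.
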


\vspace{5pt}

Even though Theorem \ref{main} is stated for dimensions $n\geq 4$, there is no generality lost with this assumption because it follows from work of Cheeger {\cite[Theorem 6.10]{Cheeger}} and Colding's Volume Continuity Theorem \cite{Colding} that for any odd $n$ (in particular, $n=3$), merely the assumption of $\|\Rm\|_{L^{\frac{n}{2}}(g)}<\infty$ is sufficient to guarantee that any nonnegative Ricci curvature manifold with Euclidean volume growth is necessarily isometric to $(\R^n,g_E)$. When $n$ is even, the picture is less clear. For instance, we have the example of the Eguchi-Hanson metric which is a nonflat 4-dimensional Ricci flat manifold with $\AVR\geq \frac{1}{2}$ and $\|\Rm\|_{L^{\frac{n}{2}}}<\infty$. For the dimensions $n$ in which such examples exist, there is a well-defined function $f_n:(0,1]\to \R$ which defines the ``size of the gap" for manifolds with at least the given asymptotic volume ratio, i.e., $f_n(v)$ is the supremum of all $\rho>0$ for which
\[
\left\{(M^n,g) \text{ with } \Rc_g\geq 0, \;\AVR(g)\geq v \text{ and } \int_M |\Rm|_g^{\frac{n}{2}}\,dV_g\leq \rho \right\}=\{(\R^n, g_E)\},
\]
where both of the sets of Riemannian manifolds above is understood in the sense of equivalent classes modulo isomorphism. The general behaviour of $f_n(v)$ is not currently well understood. For instance, it is clear that $v\mapsto f_n(v)$ is nondecreasing, but it is not known (to the author's knowledge) if we can even say that $\lim_{v\to 0^+}f_n(v)=0$. In light of Theorem \ref{main} however, we have the following natural question. 

\vspace{5pt}

\begin{question}\label{question1}
Is the linear relationship between the required smallness of the curvature concentration and the asymptotic volume ratio $v$ in Theorem \ref{main} optimal? In other words, is it true that 
\[
\lim_{v\to 0^+} \frac{f_n(v)}{v} <\infty?
\]
\end{question}

\vspace{5pt}

Some evidence towards an affirmative answer to this question is the relationship between between the curvature concentration and an $L^2$-Sobolev inequality via H\"older's inequality: 
\[
\int_M |\Rm| u^2\leq \left(\int_M |\Rm|^{\frac{n}{2}}\right)^{\frac{2}{n}}\left(\int_M u^{\frac{2n}{n-2}}\right)^{\frac{n-2}{n}}\lesssim v^{\frac{2}{n}}\left(\int_M u^{\frac{2n}{n-2}}\right)^{\frac{n-2}{n}}.
\]
This coefficient of $v^{\frac{2}{n}}$ on the RHS turns out to be the largest one possible that still allows the proof to run because our translations between volume ratios and Sobolev constants is sharp with respect to $v$ (see Section \ref{sectionsobolev}) and the particular power of $\frac{2}{n}$ is precisely the one that is necessary to eliminate the scalar curvature term in the $\nu$-functional as well as the $|\Rm|^{\frac{n}{2}+1}$ term in the evolution of $|\Rm|^{\frac{n}{2}}$ along the Ricci flow (see the proof of Theorem \ref{main2}). This is suggestive that the linear relationship of curvature concentration to $v$ as in \eqref{linearrelationshipmainthm} may be the largest one that the Ricci flow approach can handle.\\

The way in which we are able to obtain an explicit bound on $\ep(n)$ is to demonstrate the existence of a long-time solution to the Ricci flow with faster than $1/t$ curvature decay, which allows us to shift the limiting contradiction argument to time infinity. It is worth noting in the following result that we obtain \emph{no control on the rate} of which $t|\Rm|_{g(t)}\to 0$. This is the sacrifice we make in exchange for obtaining an explicit bound on the constant $\ep(n)$ in the statement of Theorem \ref{main}.

\vspace{5pt}

\begin{thm}\label{main2}
For any $n\geq 4$ and $v>0$, there exists $c(n)>0$, $\iota(n,v)>0$ such that the following holds. Suppose that $(M^n,g)$ satisfies the hypotheses of Theorem \ref{main} with this choice of $n$, $v$. Then there exists a long-time solution $g(t)$, $t\in [0,\infty)$ to the Ricci flow with $g(0)=g$ which satisfies:
\begin{enumerate}[(i)]
\item $\sR_{g(t)}\geq 0$ for every $t\in [0,\infty)$; \label{main2cond5}
\item $|\Rm|_{g(t)}\leq \frac{1}{t}$ for every $t\in (0,\infty)$; \label{main2cond1}
\item $\inj_{g(t)}\geq  \iota(n,v) \sqrt{t}$ for every $t\in (0,\infty)$;  \label{main2cond2}
\item $\Vol_{g(t)}B_{g(t)}(x, r)\geq c(n)v\,r^n$ for every $x\in M$, $r>0$,  $t\in (0,\infty)$;  and \label{main2cond3}
\item $\lim_{t\to \infty}  t |\Rm|_{g(t)}=0$. \label{main2cond4} 
\end{enumerate}
\end{thm}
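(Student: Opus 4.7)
The plan is to start the Ricci flow from $g$, uniformly propagate positive $\AVR$ and the smallness of $\int_M|\Rm|^{n/2}$ along the flow so as to obtain a $|\Rm|\leq 1/t$ bound on $[0,\infty)$, and then upgrade this to $t|\Rm|\to 0$ by a blow-up/limit argument at time infinity. For short-time existence, the hypotheses $\Rc_g\geq 0$ and $\AVR(g)\geq v$ provide, via Bishop-Gromov and the Croke/Cheeger-Colding comparison (Section~\ref{sectionsobolev}), a scale-invariant $L^{2}$-Sobolev inequality on $(M,g)$ with constant $\lesssim v^{-2/n}$. Combined with $\int_{M}|\Rm|^{n/2}\leq \ep(n)v$ and after first approximating $g$ by bounded-curvature metrics, a scale-invariant $\ep$-regularity argument in the spirit of \cite{ChanLee} produces a Ricci flow $g(t)$ on $M\times[0,T)$ with $g(0)=g$ satisfying $|\Rm|_{g(t)}\leq 1/t$.

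\emph{Uniform propagation on $[0,T)$.} Scalar nonnegativity (\ref{main2cond5}) follows from the maximum principle applied to $\partial_t\sR=\Delta\sR+2|\Rc|^{2}$. For the remaining properties I would track $\int_M|\Rm|^{n/2}$. The standard Kato-Simon evolution gives
\[
\tfrac{d}{dt}\int_M|\Rm|^{n/2}\,dV\leq -c(n)\int_M|\nabla|\Rm|^{n/4}|^{2}\,dV+C(n)\int_M|\Rm|^{n/2+1}\,dV,
\]
and the H\"older-Sobolev absorption
\[
\int_M|\Rm|^{n/2+1}\leq \Bigl(\int_M|\Rm|^{n/2}\Bigr)^{2/n}\Sob(g(t))\int_M|\nabla|\Rm|^{n/4}|^{2}\,dV
\]
lets the reaction term be absorbed into the gradient term whenever $\Sob(g(t))\cdot(\int|\Rm|^{n/2})^{2/n}$ is a small dimensional constant. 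Since the product is $\lesssim v^{-2/n}\cdot(\ep(n)v)^{2/n}=C(n)\ep(n)^{2/n}$, choosing $\ep(n)$ small enough (dimensionally) forces $\int|\Rm|^{n/2}$ to be non-increasing on $[0,T)$. That $\Sob(g(t))$ itself stays controlled by a dimensional multiple of $v^{-2/n}$ along the flow follows from monotonicity of Perelman's $\nu$-entropy, which in turn yields volume noncollapsing (\ref{main2cond3}); combining (\ref{main2cond3}) with $|\Rm|\leq 1/t$, Cheeger-Gromov-Taylor gives (\ref{main2cond2}). Uniformity of the curvature bound on $[0,T)$ yields $T=\infty$.

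\emph{Upgrade to $t|\Rm|\to 0$.} Suppose for contradiction that $\alpha:=\limsup_{t\to\infty}\sup_M t|\Rm|_{g(t)}\in(0,1]$. Pick $(x_i,t_i)$ with $t_i\to\infty$ and $Q_i:=|\Rm|_{g(t_i)}(x_i)$ such that $t_iQ_i\to\alpha$, and rescale $\tilde g_i(s):=Q_i\,g(t_i+s/Q_i)$ for $s\in[-t_iQ_i,\infty)$. Each $\tilde g_i$ has $\Rc\geq 0$, $\AVR\geq c(n)v$, $|\Rm|_{\tilde g_i(s)}\leq 1/(t_iQ_i+s)$, inherits the scale-invariant bound $\int|\Rm|^{n/2}\leq \ep(n)v$, and has a uniform injectivity radius lower bound at $s=0$ thanks to (\ref{main2cond2}). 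Hamilton's compactness theorem then yields a subsequential limit immortal Ricci flow $\tilde g_\infty$ on $(-\alpha,\infty)$ with $|\Rm|_{\tilde g_\infty(0)}(x_\infty)=1$, $\Rc\geq 0$, $\AVR\geq c(n)v$, and $\int_{M_\infty}|\Rm|^{n/2}\leq \ep(n)v$. Applying the original Chan-Lee gap theorem (Theorem \ref{ChanLeeGap}) at the time slice $(M_\infty,\tilde g_\infty(0))$ forces flatness of the limit, contradicting $|\Rm|(x_\infty,0)=1$.

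\emph{Main obstacle.} The delicate step is the uniform propagation: preserving the Sobolev inequality with its sharp $v^{-2/n}$ scaling in tandem with the Hölder absorption of the $|\Rm|^{n/2+1}$ term is what allows $\ep(n)$ to be purely dimensional. Without this sharp matching of exponents - exactly the $2/n$ flagged in the introduction - the constant $\ep(n)$ would inherit a $v$-dependence and we would lose the linear relation \eqref{linearrelationshipmainthm}.
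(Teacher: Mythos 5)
Your outline for (i)--(iv) and for long-time existence is broadly aligned with the paper's strategy (approximate by bounded-curvature metrics, propagate a $v^{-2/n}$-Sobolev inequality and the smallness of $\int|\Rm|^{n/2}$ via the evolution inequality plus H\"older--Sobolev absorption, then conclude $|\Rm|\leq 1/t$, noncollapsing, and the injectivity radius bound). But your final step --- the upgrade to $t|\Rm|_{g(t)}\to 0$ --- has two genuine gaps. First, nonnegative Ricci curvature is \emph{not} preserved by Ricci flow in dimension $n\geq 4$, so the rescaled time slices $\tilde g_i(0)=Q_i\,g(t_i)$ and their limit $(M_\infty,\tilde g_\infty(0))$ need not satisfy $\Rc\geq 0$; consequently Theorem \ref{ChanLeeGap}, whose hypotheses require $\Rc\geq 0$ and whose notion of asymptotic volume ratio presupposes it, simply does not apply to the limit. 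Only $\sR\geq 0$ and the volume lower bound (iv) survive to positive times, which is exactly why the paper routes the argument differently. Second, even granting applicability, Theorem \ref{ChanLeeGap} only provides a non-explicit threshold $\ep_{CL}(n,v')$ with $v'=c(n)v$, whereas your limit carries $\int|\Rm|^{n/2}\leq C(n)\ep(n)v$ with the \emph{fixed dimensional} $\ep(n)$ from the hypotheses of Theorem \ref{main}. Verifying $C(n)\ep(n)v\leq \ep_{CL}(n,c(n)v)$ for all $v>0$ amounts to knowing $f_n(v)\gtrsim v$, i.e.\ precisely the linear gap theorem being established (Theorem \ref{main} is itself deduced from Theorem \ref{main2}); so as written the step is quantitatively circular and cannot deliver the theorem with the stated dimensional constant.

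The paper closes this step by a different mechanism: once the flow exists on $[0,1]$ with $|\Rm|_{g(1)}\leq 1$, a \emph{global} Sobolev inequality $\Sob(M,g(1))\leq [C(n)\lambda v_0^{-2/n},0]$, and $\int_M|\Rm|_{g(1)}^{n/2}\leq \Lambda\delta v_0$, it applies the bounded-curvature result {\cite[Theorem 1.2]{CM}} to $g(1)$. That theorem's smallness threshold is explicit, of the form $(\delta(n)/A)^{n/2}$ with $A\sim v^{-2/n}$, hence linear in $v$ --- this is what makes the dimensional $\ep(n)$ possible --- and it directly outputs $\lim_{t\to\infty}t\sup_M|\Rm|_{g(t)}=0$ with no blow-up at time infinity and no appeal to a gap theorem. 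If you want to keep a self-contained blow-up argument, you would need to replace Theorem \ref{ChanLeeGap} by an ingredient whose threshold is explicit and scales linearly in the volume ratio, and you would need to phrase the contradiction in terms of quantities actually preserved by the flow ($\sR\geq 0$, the volume ratio bound (iv), the Sobolev constant) rather than $\Rc\geq 0$.
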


\vspace{5pt}

The proof of Theorem \ref{main} follows directly from Theorem \ref{main2} and distance distortion along the Ricci flow, see section \ref{proofOfMain} for the details.

\vspace{5pt}

If we knew in advance that $(M^n, g)$ had \emph{bounded curvature}, we could circumvent the need for Theorem \ref{main2} in the proof of Theorem \ref{main} by directly applying the results from \cite{CM}. In particular, by Corollary \ref{SobfromAVRcor}, we could obtain the Sobolev inequality
\[
\left(\int_M |u|^{\frac{2n}{n-2}}\,dV_g\right)^{\frac{n-2}{n}}\leq C(n) v^{-\frac{2}{n}} \int_M |\nabla^g u|^2_g \,dV_g
\]
for all compactly supported $u\in W^{1,2}(M)$. If we restrict our $\ep(n)$ small enough so that 
\[
\ep(n)\leq  \left(\frac{\delta(n)}{C(n)}\right)^{\frac{n}{2}}
\]
where $\delta(n)$ is the dimensional constant in {\cite[Theorem 1.2]{CM}}, then by the conclusion of that theorem, we know that the complete bounded curvature Ricci flow $g(t)$ with initial condition $g(0)=g$, exists for all times $t\in [0,\infty)$ and satisfies the curvature estimates
\[
\sup_{x\in M} |\text{Rm}|_{g(t)}(x)\leq \frac{C_0(n)}{t},
\]
for any $t\in (0, \infty)$ and 
\[
\lim_{t\to \infty} t \sup_{x\in M} |\Rm|_{g(t)}(x) =0.
\]
This gives conditions akin to \eqref{main2cond1} and \eqref{main2cond4}. Moreover, by {\cite[Corollary 4.3]{CM}} and Lemma \ref{volfromSob}, we know that there exists a $v_0(n,v)>0$ such that 
\[
\Vol_{g(t)}B_{g(t)}(x, r)\geq v_0(n,v) r^n\; \text{ for any } \; t\geq 0, \;r>0, \; x\in M, 
\]
which gives \eqref{main2cond3}. Penultimately, for any $t>0$, the conditions
\[
\Vol_{g(t)} B_{g(t)}(x, \sqrt{t})\geq v_0 t^{\frac{n}{2}} \;\text{ and } \; |\Rm|_{g(t)}\leq \frac{C_0}{t},
\]
together imply the injectivity radius bound 
\[
\inj_{g(t)}(x) \geq  \iota(n,v_0) \sqrt{t}
\]
by \cite{CGT} for some $\iota(n,v_0)>0$ which does not depend on $t>0$. This gives condition \eqref{main2cond2}. Finally, condition \eqref{main2cond5} follows immediately from the assumption that $g(t)$ is a bounded curvature flow (so in particular, lower scalar curvature bounds are preserved by the maximum principle). The proof of Theorem \ref{main} then follows synonymously (see section \ref{proofOfMain}).\\

The issue that arises when $(M,g)$ has unbounded curvature is \emph{not} establishing short-time existence of the flow. Indeed, by {\cite[Theorem 4.1]{CHL}}, we can say that if $(M,g)$ satisfies the hypotheses of Theorem \ref{main} (for some $v>0$), then for any $\alpha>0$ there is a Ricci flow $g(t)$ with $g(0)=g$, defined on some short time interval $[0,T(\alpha, g)]$, which enjoys $\alpha/t$ curvature control. Rather, the issues that we primarily face in this case are: 1. Extending the flow for all times $t\in[0,\infty)$; and 2. Establishing the faster than $1/t$ curvature decay as in \eqref{main2cond4} in Theorem \ref{main2}. We get around both of these issues by showing that there exists a\footnote{The question of uniqueness of Ricci flows emanating from unbounded curvature complete metrics is still unanswered in general.} Ricci flow $g(t)$ which preserves the Sobolev inequality and curvature concentration of the initial metric (up to dimensional constants). Let us explain this in slightly more detail: Just as mentioned in the bounded curvature case above, we obtain from an asymptotic volume ratio at least $v>0$, the Sobolev inequality 
\[
\left(\int_{M} |u|^{\frac{2n}{n-2}}\,dV_g\right)^{\frac{n-2}{n}}\leq C(n) v^{-\frac{2}{n}}\int_{M} |\nabla^g u|^2_g \,dV_g.
\]
The Ricci flow that we construct will satisfy the properties
\be\label{flowpreservedpropertiesintro}
\begin{cases}
&|\Rm|_{g(t)}\leq \frac{1}{t}, \\
&\int_M |\Rm|_{g(t)}^{\frac{n}{2}}\,dV_{g(t)}\leq C_1(n) \ep(n) v, \text{ and } \\
&\left(\int_{M} |u|^{\frac{2n}{n-2}}\,dV_{g(t)}\right)^{\frac{n-2}{n}}\leq C_1(n)C(n) v^{-\frac{2}{n}}\int_{M} |\nabla^{g(t)} u|^2_{g(t)} \,dV_{g(t)}.
\end{cases}
\ee

This preservation of the Sobolev inequality and smallness of the curvature concentration allows us to apply {\cite[Theorem 1.2]{CM}} with the bounded curvature metric $g(1)$ replacing $g(0)$, from which we obtain the faster than $1/t$ curvature decay. \\

The organization of this paper is as follows. In section \ref{proofOfMain}, we prove that Theorem \ref{main} follows from Theorem \ref{main2}. In section \ref{sectionsobolev} we investigate the relationship between volume ratios, $L^2$-Sobolev inequalities, and control on the $\bar\nu$-functional in spaces with nonnegative Ricci curvature. In fact, we prove that these three notions of Sobolev control are equivalent in this setting. In section \ref{sectiondistance} we discuss how distances evolve under the Ricci flow, proving in particular, a version of the so-called ``Expanding Balls Lemma" where the time interval depends only on the dimension, $\alpha/t$ curvature bound, and lower bound on scalar curvature. In section \ref{sectionboundedcurv}, we prove a short-time Ricci flow existence result for \emph{bounded curvature} metrics with the property that the bound on the local curvature concentration is preserved (up to a dimensional constant). Finally in section \ref{sectionproofofmain}, we prove Theorem \ref{main2} by approximating blow-downs of $(M,g)$ by bounded curvature metrics, from which our short-time bounded curvature Ricci flow existence result is readily applied. This proves the existence of a long-time Ricci flow solution emanating from $g$ with the desired properties given in \eqref{flowpreservedpropertiesintro}, and thus also with the faster than $1/t$ curvature decay.

\vspace{5pt}

\section*{Acknowledgement}

I would like to thank Albert Chau for the helpful comments on earlier drafts of this paper, as well as Man-Chun Lee for the discussions of related problems at CRM.

\vspace{5pt}

\section{Proof of Theorem \ref{main} from Theorem \ref{main2}}\label{proofOfMain}

Here we provide a brief proof of our main result Theorem \ref{main} from our auxiliary long-time Ricci flow existence result, Theorem \ref{main2}. The method of proof will be familiar to experts; due to the scale-invariant properties of the hypotheses of Theorem \ref{main2}, we may apply Hamilton's Compactness Theorem to a sequence of blow-down flows to extract a converging subsequence on $(0,1]$. Then the faster than $1/t$ curvature decay and volume estimates together imply that this limiting flow must be isometric to the flat Euclidean flow on $\R^n$. The rigidity of the original metric can subsequently be concluded using distance distortion of Ricci flows. Here are the details.  

\begin{proof}[Proof of Theorem \ref{main} (assuming Theorem \ref{main2})]
Let $g(t)$, $t\in [0,\infty)$ be the long-time Ricci flow emanating from $g$ given by Theorem \ref{main2}. Now take any sequence of times $t_i\to \infty$, and define the sequence of parabolically rescaled flows $g_i(t)=\frac{1}{t_i}g(t_i t)$, each defined on $[0,1]$. Note that the curvature bound \eqref{main2cond1} and the injectivity radius bound \eqref{main2cond2} survive this rescaling, and so by Hamilton's Compactness Theorem \cite{Hamilton} and a diagonal argument, the $(M, g_i(t), x_0)$ subconverges (in the pointed Cheeger-Gromov sense) to some limiting complete flow $(\widetilde M, \tilde g(t), x_\infty)$, $t\in (0, 1]$ which, by nature of the faster than $1/t$ curvature decay \eqref{main2cond4}, must satisfy 
\[
|\Rm|_{\tilde g(t)}\equiv 0 \;\text{ for each } t\in (0,1]. 
\]
Furthermore, each $\tilde g(t)$ must have a positive asymptotic volume ratio because \eqref{main2cond3} passes to the limit as well. Therefore, for each $t\in (0,1]$, we must have that $(\widetilde M, \tilde g(t))$ is isometric to $(\R^n, g_E)$. Thus if we fix any $\delta>0$ and $t\in (0, 1]$ small enough (depending on $\delta$ and $n$), we can say that for all sufficiently large $i\in\N$, there holds 
\ba
\Vol_{g_i(0)}B_{g_i(0)}(x_0, 1)& \geq\Vol_{g_i(t)}B_{g_i(0)}(x_0, 1)\\&\geq 
\Vol_{g_i(t)}B_{g_i(t)}(x_0, 1-\beta \sqrt{t})\\&\geq
(1-\delta)\omega_n (1-\beta \sqrt{t})^n\\&\geq 
(1-2\delta)\omega_n
\end{align*}
where we have used the nonnegative scalar curvature of the flow \eqref{main2cond5} in the first inequality, the Shrinking Balls Lemma {\cite[Corollary 3.3]{ST}} in the second (here $\beta=\beta(n)$ is a dimensional constant), the convergence $g_i(t)\to g_E$ in the third, and our smallness condition on $t$ in the fourth. Scaling back tells us that 
\[
\lim_{r\to \infty}\frac{\Vol_g B_g(x_0, r)}{ r^n}\geq (1-2\delta) \omega_n.
\]
Since $\delta>0$ was arbitrary, we can use our assumption of $\Rc_{g}\geq 0$ again to in fact see that 
\[
\lim_{r\to \infty}\frac{\Vol_g B_g(x_0, r)}{r^n}= \omega_n.
\]
Therefore $(M^n,g)$ is isometric to $(\R^n, g_E)$ by the rigidity statement in the Bishop-Gromov Volume Comparison Theorem. 
\end{proof}

\vspace{5pt}

\section{Relationship between Sobolev quantities}\label{sectionsobolev}

\vspace{5pt}

In this section, we will investigate the relationship between three different notions of ``Sobolev control" in manifolds which enjoy lower Ricci bounds: volume ratios, $L^2$-Sobolev inequality, and control on the $\bar\nu$-functional. We will see that when $\Rc_g\geq 0$, all three of these notions of Sobolev control are equivalent (up to dimensional constants). 

\vspace{5pt}

For the sake of brevity, we introduce the following notation. For a connected $\Omega\subset (M,g)$, we will write
\[
\Sob(\Omega)\leq [A,B]
\]
to mean that the $L^2$-Sobolev inequality
\[
\left(\int_{\Omega} |u|^{\frac{2n}{n-2}}\,dV_g\right)^{\frac{n-2}{n}}\leq A\left( \int_{\Omega} |\nabla^g u|^2_g +B u^2\,dV_g\right)
\]
holds for all $u\in W^{1,2}(\Omega)$ with $\supp \,u\subset \subset \Omega$. We may also write $\Sob(\Omega,g)$ to emphasize which metric is being used.

\vspace{5pt}

The fact that an $L^2$-Sobolev inequality implies control on volume ratios is well-known, but since we care about the exact relationship between these quantities, we will provide a short proof. The following is only a slight modification of {\cite[Theorem 6.1]{Ye}}. 

\vspace{5pt}

\begin{lem}[Volume from Sobolev] \label{volfromSob}
Let $(M^n,g)$, $n\geq 3$ be a (possibly incomplete) Riemannian manifold with $B_g(x,L)\subset \subset M$ for some $L>0$. Assume that $\Sob(B_g(x,L))\leq [A, B]$ for some $A>0$, $B\geq 0$. Then whenever $r\leq L$, there holds
\[
\Vol_g B_g(x,r)\geq \left(\frac{1}{A(2^{n+3}+2BL^2)}\right)^{\frac{n}{2}}r^n.
\]
\end{lem}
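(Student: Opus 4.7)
The plan is a standard Moser-type iteration: substitute a cutoff into the Sobolev inequality, convert the resulting estimate into a scale-invariant recursion on $V(r)/r^n$, and iterate it downward to small radii, where the smooth Euclidean limit $V(s)/s^n \to \omega_n$ at the interior point $x$ forces a contradiction with the contrary bound.

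Concretely, for any $r \in (0,L]$ I would plug the standard tent function $u$ (equal to $1$ on $B_g(x,r/2)$, linearly decaying to $0$ on $B_g(x,r) \setminus B_g(x,r/2)$, and vanishing outside) into the hypothesis $\Sob(B_g(x,L)) \leq [A,B]$. Since $|\nabla u| \leq 2/r$ almost everywhere and $B \leq BL^2/r^2$ (using $r \leq L$), writing $V(s) := \Vol_g B_g(x,s)$ we obtain
\[
V(r/2)^{(n-2)/n} \leq A(4 + BL^2)\,r^{-2}\,V(r).
\]
In the scale-free variable $f(s) := V(s)/s^n$ this is equivalent to the recursion
\[
f(r/2)^{(n-2)/n} \leq K f(r), \qquad K := A(2^n + 2^{n-2}BL^2),
\]
holding for every $r \in (0,L]$.

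Taking logarithms and iterating downward from $r$ to $r/2^k$, the amplification factor $a := n/(n-2) > 1$ telescopes into the closed form
\[
\log f(r/2^k) \leq a^k\bigl[\log f(r) + \tfrac{n}{2}\log K\bigr] - \tfrac{n}{2}\log K.
\]
Now suppose, toward contradiction, that $f(r) < K^{-n/2}$ for some $r \in (0,L]$. Then the bracketed quantity is strictly negative, and because $a^k \to \infty$ we conclude $f(r/2^k) \to 0$. But since $(M,g)$ is smooth and $B_g(x,L) \subset\subset M$, the classical local Euclidean expansion $V(s) = \omega_n s^n(1 + O(s^2))$ gives $f(r/2^k) \to \omega_n > 0$, a contradiction. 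Hence $f(r) \geq K^{-n/2}$ for every $r \in (0,L]$, which is the desired lower bound up to repackaging of the constant; the precise form $A(2^{n+3} + 2BL^2)$ quoted in the lemma is reachable by a slightly less efficient decomposition of the term $4/r^2 + B$ or by using the alternative test function $(r - d(x,y))_+$.

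The argument is routine Moser iteration, and the only point worth emphasizing is that the Sobolev constants $A$ and $B$ are fixed on the ambient ball $B_g(x,L)$ and therefore remain unchanged as the recursion is iterated to arbitrarily small scales, so the smooth Euclidean structure at $x$ can be accessed in the limit without any further analytic input.
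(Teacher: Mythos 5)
Your core argument --- plug in a tent function, pass to the scale-invariant quantity $f(s)=V(s)/s^n$, iterate the resulting recursion down to $s\to 0$ where smoothness forces $f(s)\to\omega_n$ --- is correct and is essentially the same iteration the paper invokes by citing the latter half of Ye's Theorem~6.1. The genuine difference, and the gap, is in how the zeroth-order term $B$ is handled. You carry $B$ through the iteration, and the normalization $(r/2)^{n-2}$ versus $r^{n-2}$ multiplies the \emph{entire} bracket $4/r^2+B$ by $2^{n-2}$; this is why you land on $K=A(2^n+2^{n-2}BL^2)$ rather than $A(2^{n+3}+2BL^2)$. For $n\geq 4$ and $BL^2$ large (e.g.\ $n=4$, $BL^2>56$) your constant is strictly worse than the one claimed in the lemma, so as written you have not proved the stated inequality. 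Moreover, your proposed repair does not work: the offending factor $2^{n-2}$ on $BL^2$ comes from the scale normalization, not from how you split $4/r^2+B$, and the test function $(r-d(x,y))_+$ only makes the $B$-coefficient worse (it yields $A(4/r^2+4B)$).

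The paper's route avoids this entirely: it argues by contradiction, rescales to the unit ball, and \emph{absorbs} the $B$-term at the outset via H\"older, using the contradiction hypothesis $\Vol_{\tilde g}B_{\tilde g}(x,1)<\delta$ to get $ABL^2\int u^2\leq ABL^2\delta^{2/n}\|u\|_{2n/(n-2)}^2\leq\frac12\|u\|_{2n/(n-2)}^2$. This reduces matters to a pure Sobolev inequality $\Sob\leq[2A,0]$, so the iteration never sees $B$ and the coefficient $2BL^2$ in the final constant reflects only the absorption threshold, decoupled from the dimensional factor $2^{n+3}$. Your proof is easily completed the same way: under the hypothesis $f(r)<\bigl(A(2^{n+3}+2BL^2)\bigr)^{-n/2}$, every test function in your iteration is supported in $B_g(x,r)$, so the same H\"older absorption applies at every scale, and your own recursion with $B=0$ and $A$ replaced by $2A$ gives $f(r)\geq(2^{n+1}A)^{-n/2}$, a contradiction. (In the paper's actual applications one has $BL^2\leq 1$, where your weaker constant would in fact suffice, but the lemma as stated claims the sharper form.)
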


\begin{proof}
Assume for the sake of contradiction that there exists $(M^n,g,x)$, $L>0$, $A>0$, $B\geq 0$ for which the hypotheses hold, and some $r\in (0,L]$ such that 
\[
\Vol_g B_g(x,r)< \left(\frac{1}{A(2^{n+3}+2BL^2)}\right)^{\frac{n}{2}}r^n=:\delta r^n.
\]
Set $\tilde g=r^{-2}g$ so that $\Vol_{\tilde g}B_{\tilde g}(x,1)<\delta$. Then by the assumed Sobolev inequality for the original metric $g$ and the fact that $r\leq L$, we obtain $\Sob(B_{\tilde g} (x,1))\leq [A, BL^2]$. Now by H\"older's inequality, $\Vol_{\tilde g} B_{\tilde g}(x,1)<\delta$, and the definition of $\delta$, we have
\[
ABL^2\int_{B_{\tilde g}(x,1)} u^2 \,dV_{\tilde g}\leq ABL^2\delta^{\frac{2}{n}}\left(\int_{B_{\tilde g}(x,1)} |u|^{\frac{2n}{n-2}} \,dV_{\tilde g}\right)^{\frac{n-2}{n}}\leq \frac{1}{2}\left(\int_{B_{\tilde g}(x,1)} |u|^{\frac{2n}{n-2}} \,dV_{\tilde g}\right)^{\frac{n-2}{n}}.
\]
Therefore this term can be absorbed into the LHS of the Sobolev inequality for $\tilde g$ to give $\Sob(B_{\tilde g} (x,1))\leq [2A, 0]$. The rest of the proof then is verbatim to the latter half of the proof of {\cite[Theorem 6.1]{Ye}}.
\end{proof}

\vspace{5pt}

Now we recall the definition of local entropy formulated by Wang \cite{WangA}: Let $(M^n,g)$ be a complete Riemannian manifold of dimension $n$, and $\Omega\subset M$ a connected open subset with smooth boundary and define
\ba
&\mathscr{S}(\Omega):=\left\{ u \bigg| u\in W^{1,2}_0(\Omega), \;\; u\geq 0, \;\; \int_\Omega u^2 \,dV_g=1\right\}, \\
&\overline{W}(\Omega, g,u, \tau):=\int_\Omega 4\tau |\nabla^g u|_g^2 -u^2 \log u^2 \,dV_g -\frac{n}{2}\log (4\pi \tau) -n, \\
&W(\Omega, g,u, \tau):=\int_\Omega \tau (4|\nabla^g u|_g^2+\sR_g u^2) -u^2 \log u^2 \,dV_g -\frac{n}{2}\log (4\pi \tau) -n, \\
&\bar{\mu}(\Omega, g, \tau):= \inf_{u\in \mathscr{S}(\Omega)} \overline{W}(\Omega, g,u, \tau), \;\;\; \mu(\Omega, g, \tau):= \inf_{u\in \mathscr{S}(\Omega)} W(\Omega, g,u, \tau), \\
&\bar{\nu}(\Omega, g, \tau):= \inf_{s\in (0, \tau]}\bar{\mu}(\Omega, g, s), \;\;\; \nu(\Omega, g, \tau):= \inf_{s\in (0, \tau]}\mu(\Omega, g, s), \\
&\bar{\nu}(\Omega, g):= \inf_{s\in (0, \infty)}\bar{\mu}(\Omega, g, s), \;\;\; \nu(\Omega, g):= \inf_{s\in (0, \infty)}\mu(\Omega, g, s).
\end{align*}

In the above (and throughout the paper), $\sR_g$ denotes the scalar curvature of $g$. Although these definitions are given for domains $\Omega$ with \emph{smooth} boundaries, we will often take the domain to be some geodesic ball within $M$, which will not necessarily have smooth boundary. This slight abuse of notation is validated for our purposes by {\cite[Lemma 2.6 and Remark 2.7]{WangA}}. \\

The following shows the implication of $\bar\nu$-functional control given a Sobolev inequality. Its proof is a simple consequence of Jensen's inequality (see for instance \cite{Ye}, \cite{ChenEric}, \cite{CM}). 

\vspace{5pt}

\begin{lem}[$\bar \nu$-functional from Sobolev]\label{nufromSob}
For any $n\geq 3$, there exists a dimensional constant $C(n)>0$ such that the following holds. Let $(M^n,g)$ be a (possibly incomplete) Riemannian manifold with $\Omega\subset\subset M$ and $\Sob(\Omega)\leq [A,\frac{a}{\tau}]$ for some $A\geq 1$, $a\geq 0$, $\tau>0$. Then $\bar\nu(\Omega, g,\tau)\geq -\frac{n}{2}\log(C(n) A)-4a$. If in addition $\int_{\Omega}|\Rm|_g^{\frac{n}{2}}\,dV_g\leq \left(\frac{2}{n A}\right)^{\frac{n}{2}}$, or more generally $\int_{\Omega}\sR_g^{\frac{n}{2}}\,dV_g\leq \left(\frac{2}{ A}\right)^{\frac{n}{2}}$, then $ \nu(\Omega, g, \tau)\geq -\frac{n}{2}\log(C(n) A)-4a$.
\end{lem}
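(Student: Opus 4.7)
The plan is a short direct computation combining Jensen's inequality with the hypothesized Sobolev inequality, so most of the work is bookkeeping. Fix $u\in \mathscr{S}(\Omega)$ and write $p=\tfrac{2n}{n-2}$. Since $u^2\,dV_g$ is a probability measure on $\Omega$, Jensen's inequality applied to the concave function $\log$ gives
\[
(p-2)\int_\Omega u^2\log u\,dV_g \;=\; \int_\Omega u^2\log u^{p-2}\,dV_g \;\le\; \log \int_\Omega u^{p}\,dV_g \;=\; p\log \|u\|_p;
\]
using $p-2=4/(n-2)$, this rearranges to $\int u^2\log u^2\le \tfrac{n}{2}\log\|u\|_p^2$. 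The hypothesis $\Sob(\Omega)\le [A,a/\tau]$ together with $\int u^2=1$ then yields
\[
\int_\Omega u^2\log u^2\,dV_g \;\le\; \tfrac{n}{2}\log\!\bigl(A\|\nabla^g u\|_{L^2}^2 + Aa/\tau\bigr).
\]

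Next I fix $s\in (0,\tau]$, abbreviate $X=\|\nabla^g u\|_{L^2}^2$, and introduce the variable $Y=sX+sa/\tau$. Substituting the entropy estimate into the definition of $\overline W$ and combining the two logarithms gives
\[
\overline W(\Omega, g, u,s) \;\ge\; 4sX - \tfrac{n}{2}\log(4\pi A Y) - n.
\]
The restriction $s\le \tau$ enters here in the form $s(a/\tau)\le a$, which yields $4sX=4(Y-sa/\tau)\ge 4Y-4a$ and thereby decouples the bound from the Sobolev constant $B=a/\tau$. Minimizing $4Y-\tfrac{n}{2}\log Y$ over $Y>0$ (at $Y=n/8$) produces a lower bound of the form $-\tfrac{n}{2}\log(C(n)A) -4a$ with an explicit $C(n)=en\pi/2$. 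The bound is uniform in both $s\in (0,\tau]$ and $u\in \mathscr{S}(\Omega)$, so taking infima gives the claim for $\bar\nu(\Omega, g,\tau)$.

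For the second statement, the only difference between $W$ and $\overline W$ is the term $s\int_\Omega \sR_g u^2\,dV_g$. Applying H\"older's inequality with exponents $n/2$ and $n/(n-2)$ followed by the Sobolev inequality gives
\[
\int_\Omega |\sR_g|\, u^2\,dV_g \;\le\; \Bigl(\int_\Omega |\sR_g|^{n/2}\Bigr)^{\!2/n}\|u\|_p^2 \;\le\; \tfrac{2}{A}\cdot A(X+a/\tau) \;=\; 2X + 2a/\tau,
\]
using the curvature hypothesis in the last step; hence $s\int \sR_g u^2\ge -2sX - 2a$ for $s\le\tau$. Rerunning the previous argument with the effective coefficient on the gradient term reduced from $4s$ to $2s$ (so the optimal $Y$ moves to $n/4$) reproduces the same shape of bound, at the cost of slightly enlarging $C(n)$. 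The $|\Rm|_g$-hypothesis reduces to the scalar one via the pointwise bound $|\sR_g|\le n|\Rm|_g$, which is exactly calibrated so that $(2/(nA))^{n/2}\cdot n^{n/2} = (2/A)^{n/2}$.

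The proof is essentially mechanical; the only real pitfall is keeping $s$ and $\tau$ distinct long enough to observe that the Sobolev estimate $\Sob(\Omega)\le [A,a/\tau]$ uses the fixed $\tau$, not $s$, which is precisely why the restriction $s\le \tau$ is needed to absorb the $B$-term into the lower bound and thereby pass from $\bar\mu, \mu$ to $\bar\nu, \nu$ without loss.
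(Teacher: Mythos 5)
Your proposal is correct and follows essentially the same route as the paper: Jensen's inequality turns the $L^2$-Sobolev bound into the entropy estimate $\int u^2\log u^2\le \tfrac{n}{2}\log(A(X+a/\tau))$, the restriction $s\le\tau$ absorbs the $a/\tau$-term into the $-4a$, and the scalar curvature term in $W$ is handled by H\"older plus the Sobolev inequality together with $|\sR_g|\le n|\Rm|_g$. The only cosmetic difference is that you minimize $4Y-\tfrac{n}{2}\log Y$ directly where the paper invokes $\log x\le bx-1-\log b$ (and relabels $\tilde s=2s$ in the $\nu$-case), which are equivalent manipulations.
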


\begin{proof}
Just as in the proof of {\cite[Lemma 3.1]{CM}}, Jensen's inequality and the fact that $\log x\leq  bx-1-\log b$ (for any $b>0$) implies
\be\label{2.2proof}
\begin{split}
\int_\Omega u^2 \log u^2\,dV_g&\leq\frac{n}{2}\log\left(\int_\Omega  |\nabla^g u|^2_g+\frac{a}{\tau}u^2\,dV_g\right)+\frac{n}{2}\log A\\&\leq 
4s\left(\int_\Omega  |\nabla^g u|^2_g+\frac{a}{\tau}u^2\,dV_g\right)+\frac{n}{2}\log A-1-\log \left(\frac{8s}{n}\right).
\end{split}
\ee
This shows that $\bar\nu(\Omega, g,\tau)\geq -\frac{n}{2}\log(C_1(n) A)-4a$ since $\frac{4sa}{\tau}\leq 4a$ when $s\leq \tau$. For the other implication, we apply H\"older's and our Sobolev inequality
\[
2s\int_\Omega \sR_g u^2\,dV_g\geq -4s\left(\int_\Omega|\nabla^g u|^2_g+\frac{a}{\tau}u^2\,dV_g\right).
\]
Putting this back into \eqref{2.2proof} gives 
\[
\int_\Omega u^2 \log u^2\,dV_g\leq 8s\left(\int_\Omega  |\nabla^g u|^2_g+\frac{\sR_g}{4}u^2+\frac{a}{\tau}u^2\,dV_g\right)+\frac{n}{2}\log A-1-\log \left(\frac{8s}{n}\right).
\]
Now relabel $\tilde s=2s$ and restrict $\tilde s\leq \tau$ to obtain $\nu(\Omega, g, \tau)\geq -\frac{n}{2}\log(2C_1(n) A)-4a$. For simplicity, we will assume $C(n)=2C_1(n)$ from which both implications clearly follow. 
\end{proof}

\vspace{5pt}

It is also well-known that rough control on the $\bar\nu$-functional on a set $\Omega$ is sufficient to obtain a Sobolev inequality on $\Omega$. We include a precise statement for our ease of reference. 

\vspace{5pt}

\begin{lem}[Sobolev from $\bar{\nu}$-functional]\label{Sobfromnu}
For any $n\geq 3$, there exists a dimensional constant $C(n)>0$ such that the following holds. Suppose $(M^n,g)$, is a (possibly incomplete) Riemannian manifold, $\Omega\subset\subset M$ is connected, and $A,\tau>0$.
\begin{enumerate}
\item If $\bar{\nu}(\Omega, g, \tau)\geq -\frac{n}{2}\log A$, then $\Sob(\Omega)\leq [C(n)A,\frac{1}{\tau}]$.\label{sobfromnupart1}
\item If $\nu(\Omega, g, \tau)\geq -\frac{n}{2}\log A$ and $\int_{\Omega}|\Rm|_g^{\frac{n}{2}}\,dV_g\leq \left(\frac{2}{nC(n)A}\right)^{\frac{n}{2}}$, or more generally $\int_{\Omega}\sR_g^{\frac{n}{2}}\,dV_g\leq \left(\frac{1}{ C(n)A}\right)^{\frac{n}{2}}$, then $\Sob(\Omega)\leq [2C(n)A,\frac{1}{\tau}]$.\label{sobfromnupart2}
\end{enumerate}
\end{lem}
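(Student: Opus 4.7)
The approach is to unwind the $\bar\nu$-bound as a defective logarithmic Sobolev inequality, optimize over the scale parameter $s$ to put it into Euclidean form, and then invoke the standard equivalence of Euclidean log-Sobolev with the $L^2$-Sobolev inequality to conclude. Unpacking $\bar\nu(\Omega,g,\tau)\ge -\tfrac{n}{2}\log A$ yields, for every $s\in(0,\tau]$ and every $u\in\mathscr{S}(\Omega)$,
\[
\int_\Omega u^2\log u^2\,dV_g\;\le\; 4s\int_\Omega|\nabla^g u|_g^2\,dV_g \;+\;\tfrac{n}{2}\log A - \tfrac{n}{2}\log(4\pi s) - n,
\]
and homogeneity (applied to $u/\|u\|_{L^2}$) extends this to arbitrary $u\in W^{1,2}_0(\Omega)$ at the price of an extra $\|u\|_{L^2}^2\log\|u\|_{L^2}^2$ term.

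For a normalized $u$ with Dirichlet energy $K=\|\nabla^g u\|_{L^2}^2$ satisfying $K\ge n/(8\tau)$, the minimizing choice $s^{\ast}=n/(8K)\in(0,\tau]$ is admissible, and substitution produces the Euclidean-type log-Sobolev inequality $\int u^2\log u^2\le \tfrac{n}{2}\log(C_1(n)AK)$. For smaller $K$, taking $s=\tau$ gives the analogous bound with $K$ replaced by $K+c(n)/\tau$, so the two regimes combine into a single Euclidean log-Sobolev in which the Dirichlet energy is effectively $\int_\Omega(|\nabla^g u|_g^2+\tfrac{1}{\tau}u^2)\,dV_g$. The classical equivalence of Euclidean log-Sobolev with the $L^2$-Sobolev inequality (via Beckner/Rothaus, or equivalently via the Davies--Gross route through heat semigroup ultracontractivity and Nash's inequality) then delivers $\Sob(\Omega)\le[C(n)A,1/\tau]$, completing part (1).

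For part (2), the same derivation starting from $\nu$-control produces the log-Sobolev with the additional right-hand side term $s\int_\Omega \sR_g u^2\,dV_g$. By H\"older's inequality and the scalar curvature smallness assumption,
\[
s\int_\Omega \sR_g u^2\,dV_g\;\le\; s\Big(\int_\Omega |\sR_g|^{n/2}\,dV_g\Big)^{2/n}\|u\|_{L^{2n/(n-2)}}^2 \;\le\; \frac{s}{C(n)A}\|u\|_{L^{2n/(n-2)}}^2.
\]
Running the log-Sobolev-to-Sobolev conversion of part (1) with this extra term produces the target Sobolev inequality, now with a residual coefficient $\le 1/2$ in front of $\|u\|_{L^{2n/(n-2)}}^2$ on the right-hand side provided $C(n)$ is taken sufficiently large. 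Absorbing this term into the left-hand side of the Sobolev then yields $\Sob(\Omega)\le[2C(n)A,1/\tau]$, concluding part (2).

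The main obstacle I expect is making the Euclidean log-Sobolev $\Rightarrow$ Sobolev step quantitative, with explicit dependence on $A$ and with the $B=1/\tau$ term emerging cleanly from the restriction $s\in(0,\tau]$. The cleanest route is through heat semigroup ultracontractivity: the defective log-Sobolev valid for $s\in(0,\tau]$ gives $\|e^{-t\Delta}\|_{L^1\to L^\infty}\lesssim (t/A)^{-n/2}$ for $t\in(0,\tau]$, which is equivalent to a Nash inequality and hence to the sought Sobolev inequality on $\Omega$; the truncation at $\tau$ is what produces the $B=1/\tau$ additive term. A secondary subtlety is justifying the absorption in part (2) without circularity, which is fine because $\|u\|_{L^{2n/(n-2)}}$ is \emph{a priori} finite by the compact Sobolev embedding $W^{1,2}_0(\Omega)\hookrightarrow L^{2n/(n-2)}(\Omega)$ on the precompact set $\Omega$.
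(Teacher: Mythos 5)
Your part (1) follows essentially the same route as the paper: the paper likewise unpacks the $\bar\nu$-bound into a logarithmic Sobolev inequality valid for $s$ in a bounded range and converts it to the $L^2$-Sobolev inequality, delegating that conversion to Ye's Theorem 5.5 — whose proof is precisely the Davies ultracontractivity/heat-kernel argument you sketch — with the explicit linear dependence $C(n)A$ obtained by tracking constants through Ye (as observed by Chen) and the additive $B=1/\tau$ term arising from the truncation of the admissible scales. So part (1) is fine, modulo the citation you would need for the quantitative conversion.

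In part (2), however, the order of operations as written does not go through. You apply H\"older to the scalar curvature term at the log-Sobolev stage, which places a term $\frac{s}{C(n)A}\|u\|_{L^{2n/(n-2)}}^2$ on the right-hand side of the logarithmic Sobolev inequality, and then assert that the log-Sobolev-to-Sobolev conversion runs with this extra term present. The conversion machinery (Gross--Davies ultracontractivity, or equivalently the Nash-inequality route) requires the right-hand side to be $s$ times the Dirichlet form of the semigroup being iterated plus a function of $s$; an $L^{2n/(n-2)}$-norm term is not of that form and cannot be carried through the semigroup iteration, while bounding it a priori by the Dirichlet energy would be circular since the Sobolev constant of $\Omega$ is exactly what is being established. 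The repair — and what the paper does — is to keep the scalar curvature inside the quadratic form: bound $s\int_\Omega \sR_g u^2\,dV_g\le s\int_\Omega \sR_g^+ u^2\,dV_g$ and run the conversion for the Schr\"odinger form $\int_\Omega\bigl(|\nabla^g u|_g^2+\tfrac{1}{4}\sR_g^+u^2+\lambda u^2\bigr)dV_g$ on compact $K\subset\subset\Omega$ (Ye's theorem applies to such operators with potential), obtaining the Sobolev inequality with the potential term on the right-hand side; only then does one apply H\"older together with the smallness of $\|\sR_g\|_{L^{n/2}}$ to absorb $\frac{C(n)A}{4}\int_K\sR_g^+u^2\,dV_g$ into the left-hand side, which is legitimate because $\|u\|_{L^{2n/(n-2)}(K)}$ is finite. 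Your final absorption step and its non-circularity justification are correct; it is only the intermediate claim that the conversion tolerates an $L^{2n/(n-2)}$ term on the right of the log-Sobolev inequality that needs to be reorganized in this way.
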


\begin{proof}
We will only discuss the proof of part \eqref{sobfromnupart2} since part \eqref{sobfromnupart1} is simpler. Fix some $\lambda>0$, a compact $K\subset \subset \Omega$, and observe that the condition on the $\nu$-functional implies
\ba
\int_\Omega u^2 \log u^2 \,dV_g&\leq s \left(\int_\Omega |\nabla^g u|_g^2+\frac{\sR_g }{4} u^2 \,dV_g\right) -\frac{n}{2}\log s+\frac{n}{2}\log A-\frac{n}{2}\log (\pi) -n\\&\leq  s \left(\int_\Omega |\nabla^g u|_g^2+\frac{\sR_g^+ }{4} u^2+\lambda u^2 \,dV_g\right) -\frac{n}{2}\log s+\frac{n}{2}\log A-\frac{n}{2}\log (\pi) -n
\end{align*}
for all $s\in (0,4\tau]$ and $u\in W^{1,2}_0(K)$ with $\int_K u^2\,dV_g=1$. Now by {\cite[Theorem 5.5]{Ye}}, we may conclude a Sobolev inequality 
\be\label{lem3e1sob}
\left(\int_K u^{\frac{2n}{n-2}}\,dV_g\right)^{\frac{n-2}{n}}\leq C(n,A) \left(\int_K |\nabla^g u|^2_g +\frac{\sR_g^+}{4} u^2+\frac{1}{\tau}u^2+\lambda u^2\,dV_g\right)
\ee
for all $u\in W^{1,2}_0(K)$. Even though the results in \cite{Ye} are stated for closed manifolds, equivalent statements hold for compact manifolds with boundary (such as our $K\subset\subset \Omega$). Now since $K$ is compact, we may send $\lambda\to 0$. The fact that the constant $C(n,A)$ is of the precise form $C(n)A$ follows by tracking through the constants in {\cite[Theorems 5.3-5.5]{Ye}} as was first noticed by Chen \cite{ChenEric} (see also {\cite[Appendix]{CM}}). So it remains to deal with the remaining scalar curvature term. For this, we apply H\"older's inequality
\[
\frac{C(n) A}{4}\int_K \sR_g^+ u^2\,dV_g\leq \frac{C(n) A}{4}\left(\int_K |\sR_g|^{\frac{n}{2}}\,dV_g\right)^{\frac{2}{n}}\left(\int_K u^{\frac{2n}{n-2}}\,dV_g\right)^{\frac{n-2}{n}}\leq \frac{1}{2}\left(\int_K u^{\frac{2n}{n-2}}\,dV_g\right)^{\frac{n-2}{n}}.
\]
So we can absorb this term into the LHS of \eqref{lem3e1sob} to give $\Sob(K)\leq [2C(n)A, \frac{1}{\tau}]$. The result follows by taking an exhaustion of compact $K_1\subset K_2\subset \dots \subset \Omega$. 
\end{proof}

\vspace{5pt}

\begin{remark}\label{productofconstantsbound}
In our application, we will have some Ricci flow $g(t)$, $t\in [0,T]$, an initial Sobolov inequality $\Sob(\Omega,g(0))\leq [A,0]$, and some assumed smallness of $\int |\Rm|_{g(0)}^{\frac{n}{2}}\,dV_{g(0)}$. Lemma \ref{nufromSob} then will give us $\nu$-functional control $\nu(\Omega, g(0), \tau+T)\geq -\frac{n}{2}\log(C_1(n) A)$, which - in our application where our flows enjoy $c/t$ curvature control - is preserved under the Ricci flow to give $\nu(\Omega, g(t), \tau)\geq -\frac{n}{2}\log(C_1(n) A)$. Then Lemma \ref{Sobfromnu} will return us to a Sobolev inequality for $g(t)$: $\Sob(\Omega, g(t))\leq [2C_2(n) C_1(n) A, \frac{1}{\tau}]$. It follows from {\cite[Proof of Theorem 3.4 and Appendix]{CM}} that $C_1(n)C_2(n)\leq 250n$. We will use this estimate in obtaining a bound for our constant $\ep(n)$.
\end{remark}

\vspace{5pt}

In the presence of a (possibly negative) lower Ricci curvature bound $\Rc_g\geq -K$, Wang {\cite[Theorem 3.6]{WangA}} proved that one can obtain local control on the $\bar\nu$-functional depending on the volume ratio:
\be\label{nufromlowerriccibound}
\bar\nu(B_g(x,r), g, r^2)\geq -10 n^2(1+K r)+\log\left(\frac{\Vol_g B_g(x,r)}{\omega_n r^n}\right)^{n+1}.
\ee
Here $\omega_n$ is the volume of an $n$-dimensional Euclidean unit ball. In particular, when $\Rc_g\geq 0$, we can obtain global control on the $\bar\nu$-functional (and subsequently a global Sobolev inequality) if we know that \emph{all the volume ratios} of $(M,g)$ are uniformly bounded from below. Such a manifold is said to have \emph{positive asymptotic volume ratio}, which we will denote by
\[
\AVR(g):=\lim_{r\to \infty}\frac{\Vol_g B_g(x_0,r)}{\omega^n r^n}.
\]
It is well-known that when $\Rc_g\geq 0$, $\AVR(g)$ does not depend on the choice of fixed point $x_0$, is a well-defined number in $[0,1]$, and that $\AVR(g)=1$ if and only if $(M^n,g)$ is isometric to $(\R^n, g_E)$ where $g_E$ is the Euclidean metric. \\

Now suppose that $(M,g)$ were some nonnegative Ricci curvature manifold with a known volume bound
\[
\Vol_g B_g(x,1)\geq v>0.
\]
Then by \eqref{nufromlowerriccibound} we may obtain $\bar\nu(B_g(x,1),g,1)\geq -C+\log (v^{\frac{n+1}{n}})$, which in turn gives $\Sob(B_g(x,1))\leq [C'v^{-\frac{2(n+1)}{n}}, 1]$ by Lemma \ref{Sobfromnu}. Now if we return to a bound on the volume via Lemma \ref{volfromSob}, we obtain
\[
\Vol_g B_g(x,1)\geq c v^{n+1},
\]
which is a much worse volume estimate than we started with as $v\to 0$. Therefore one step along this process is not sharp with respect to $v$. It turns out that this bluntness occurs in going from volume to $\bar\nu$-functional in \eqref{nufromlowerriccibound}, and is rectified in the following lemma. Put another way, Lemma \ref{SobfromAVR2} completes the circle of equivalences between volume ratios, Sobolev inequality, and control on the $\bar\nu$-functional (up to dimensional constants). 

\vspace{5pt}

\begin{lem}[Sobolev from volume]\label{SobfromAVR2}
For any $n\geq 3$, there exists a dimensional constant $C(n)>0$ such that the following holds. Let $(M^n,g)$ be a complete Riemannian manifold (not assumed bounded curvature) such that $\Rc_{g}\geq 0$ and $\AVR(g)>0$. If for some $r>0$ and $v>0$ there holds
\be\label{SobfromAVRvolumeratio}
\frac{\Vol_g B_g(x,r)}{r^n}= v, 
\ee
then $\Sob(B_g(x,r))\leq [C(n)v^{-\frac{2}{n}}, 0]$.
\end{lem}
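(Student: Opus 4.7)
The plan is to rescale to the unit ball and then deploy a sharp Sobolev inequality for nonnegative Ricci manifolds, localized so that the constant depends on the local volume ratio $v$ rather than the global $\AVR(g)$.

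First, by the scale invariance of the Sobolev inequality when $B=0$, I rescale $\tilde g = r^{-2} g$: this preserves $\Rc \geq 0$ and $\AVR > 0$, and gives $\Vol_{\tilde g} B_{\tilde g}(x, 1) = v$. So it suffices to prove $\Sob(B_{\tilde g}(x,1), \tilde g) \leq [C(n) v^{-2/n}, 0]$. Bishop--Gromov monotonicity at $x$ then provides uniform sub-ball ratios on $B_{\tilde g}(x, 1)$: for instance $\Vol_{\tilde g} B_{\tilde g}(y, \rho)/\rho^n \geq c(n) v$ whenever $y \in B_{\tilde g}(x, 1/2)$ and $\rho \leq 1/2$. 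As highlighted in the paragraph preceding the lemma, the naive route of combining Wang's estimate \eqref{nufromlowerriccibound} (with $K=0$) and Lemma \ref{Sobfromnu} yields only $\Sob \leq [C v^{-2(n+1)/n}, 0]$, losing a factor of $n+1$ in the exponent; so one cannot reach the sharp exponent $2/n$ from a Ricci-lower-bound-only input.

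The appropriate sharp tool is Brendle's ADM-type Sobolev inequality (equivalently, the sharp Agostiniani--Fogagnolo--Mazzieri isoperimetric/Sobolev inequality) for complete $(M, g)$ with $\Rc \geq 0$ and positive asymptotic volume ratio, which yields the \emph{global} estimate $\Sob(M, g) \leq [C(n) \AVR(g)^{-2/n}, 0]$. Bishop--Gromov forces $\AVR(g) \leq v/\omega_n$ but allows $\AVR(g)$ to be strictly smaller, so this global bound alone is not enough when applied to functions supported in $B_{\tilde g}(x, 1)$. The main step is to localize this argument: the Brendle/AFM ABP/optimal-transport proof produces a constant governed by the volume of the support of the test function, so for $u \in C_c^\infty(B_{\tilde g}(x, 1))$ the relevant source volume is $\Vol_{\tilde g} B_{\tilde g}(x, 1) = v$, rather than the asymptotic value $\AVR(g) \cdot \omega_n$. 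Rerunning the ABP argument with this in place (using the uniform sub-ball ratios above for regularity/doubling inputs) produces the claimed $\Sob(B_{\tilde g}(x, 1), \tilde g) \leq [C(n) v^{-2/n}, 0]$, and rescaling back gives the lemma.

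The main obstacle is executing this localization with the sharp constant. The Brendle/AFM proofs are global by construction, and one must verify that restricting the test function to $B_{\tilde g}(x, 1)$ genuinely allows $v$ to replace $\AVR(g)$ in the final estimate without introducing spurious factors. A safer alternative that sidesteps re-running the ABP argument is to compare $B_{\tilde g}(x, 1) \subset (M, \tilde g)$ with the Euclidean metric cone over a round $(n-1)$-sphere whose volume matches that of $\partial B_{\tilde g}(x, 1)$: that cone has $\AVR = v/\omega_n$ exactly and sharp Sobolev constant $C(n) v^{-2/n}$, and the sub-ball comparisons from the first step allow one to transfer the cone's Sobolev bound onto $B_{\tilde g}(x, 1)$ via a Cheeger--Colding-style comparison or a heat-kernel/Nash comparison.
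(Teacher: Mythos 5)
You have correctly identified the tool the paper uses---Brendle's ABP-type Sobolev inequality for nonnegative Ricci curvature and positive asymptotic volume ratio, localized to a ball of definite radius---and your remark that the Wang-functional route only yields the exponent $-2(n+1)/n$ matches the paper's own discussion. But the proposal stops exactly where the work begins: you assert that ``rerunning the ABP argument'' with $v$ in place of $\AVR(g)$ ``produces the claimed bound,'' and then concede that executing this localization is ``the main obstacle'' and that one ``must verify'' it introduces no spurious factors. That verification is the entire content of the proof, and your description of the mechanism is not the right one: it is not the volume of the support of $u$ that replaces $\AVR(g)$. What the paper extracts from Brendle's argument is the intermediate estimate (Proposition \ref{Brendlemainthm}): for nonnegative $u$ vanishing on $\partial D$ and normalized by $\int_D|\nabla u|=n\int_D u^{\frac{n}{n-1}}$, the volume of the $\rho$-neighbourhood of $D$ is at most $\int_D(1+\rho\,u^{\frac{1}{n-1}})^n\,dV_g$. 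Taking $D=B_g(x,r)$ and the \emph{finite} value $\rho=3r$, the neighbourhood contains $B_g(x,2r)$, so the left-hand side is at least $v_0(2r)^n$ with $v_0=\Vol_g B_g(x,2r)/(2r)^n\geq 2^{-n}v$. The right-hand side, expanded binomially, has leading term $\Vol_g B_g(x,r)$, and the argument closes only because of a volume-deficit estimate that your proposal never supplies: using a geodesic ray from $x$ (hence completeness and noncompactness) together with Bishop--Gromov, one places a ball of definite volume inside the annulus $B_g(x,2r)\setminus B_g(x,r)$ and concludes $\Vol_g B_g(x,r)\leq(1-(1/7)^n)\,v_0(2r)^n$. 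Only then does subtracting the leading term leave a positive multiple of $v_0$ on the left, giving $v_0\leq C(n)\int u^{\frac{n}{n-1}}$, i.e.\ an $L^1$-Sobolev inequality with constant of the correct order $v_0^{-1/n}$, which upgrades to the $L^2$ inequality with constant $C(n)v^{-2/n}$ by the standard Cauchy--Schwarz substitution. Without the deficit estimate the binomial expansion yields nothing, since a priori $\Vol_g B_g(x,r)$ could account for essentially all of $\Vol_g B_g(x,2r)$.

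Your fallback suggestion---transferring the sharp Sobolev constant from a Euclidean cone of matching volume via a Cheeger--Colding-style or heat-kernel comparison---is not a routine operation, is not accompanied by any argument, and would in any case have to confront the same quantitative issue of converting a local volume ratio into a Sobolev constant with the sharp power of $v$. As written, the proposal names the correct strategy but leaves the decisive step as an acknowledged gap.
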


\vspace{5pt}

\begin{remark}
The constant $v$ in the volume ratio hypothesis \eqref{SobfromAVRvolumeratio} is permitted to be much larger than the value of $\omega_n\cdot \AVR(g)$. In fact, the assumption that $\AVR(g)>0$ in Lemma \ref{SobfromAVR2} is likely superfluous because the proof of {\cite[Theorem 1.1]{Brendle}} (which we will be modifying) is local in nature until the end when they let $r\to \infty$ (instead we will take a determined value of $r$). Regardless, the stated version will be sufficient for our purposes. 
\end{remark}

\vspace{5pt}

\begin{proof}[Proof of Lemma \ref{SobfromAVR2}]
First note that by volume comparison, we have
\[
\frac{v}{2^n}=\frac{\Vol_g B_g(x,r)}{(2r)^n}\leq\frac{\Vol_g B_g(x,2r)}{(2r)^n}\leq \frac{\Vol_g B_g(x,r)}{r^n}= v.
\]
Therefore, it suffices to prove the lemma with $v$ replaced by 
\[
v_0:=\frac{\Vol_g B_g(x,2r)}{(2r)^n},
\]
at the expense of replacing the dimensional constant $C(n)$ with (at worst) $2^n C(n)$. \\

Fix some smooth nonnegative function $u\in C^\infty(B_g(x, r))$ with $u\equiv 0$ on $\partial B_g(x, r)$. Since the hypotheses and conclusion of the lemma are invariant under scaling, we may assume that
\be\label{Soblemscalecond}
\int_{B_g(x,r)}|\nabla^g u|_g\,dV_g=n \int_{B_g(x,r)} u^{\frac{n}{n-1}}\,dV_g.
\ee
If this seems like a somewhat mysterious condition, we refer the reader to the proof of {\cite[Theorem 1.1]{Brendle}}, which we will be modifying (note that the coefficient of $n$ in \eqref{Soblemscalecond} is only there to match the convention in \cite{Brendle}). We start with a couple elementary claims. 

\vspace{5pt}

\begin{claim}\label{Soblemclaim1}
$\Vol_g B_g(x,r)\leq (2r)^n (1-(\frac{1}{7})^n ) v_0$
\end{claim}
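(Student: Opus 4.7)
The plan is to rewrite Claim \ref{Soblemclaim1} as an annulus estimate and produce a ball of radius $r/2$ inside that annulus whose volume is controlled by Bishop--Gromov. Since $v_0 = \Vol_g B_g(x, 2r)/(2r)^n$, the claim is equivalent to
\[
\Vol_g\bigl(B_g(x,2r) \setminus B_g(x,r)\bigr) \geq (1/7)^n \Vol_g B_g(x, 2r),
\]
so it suffices to find a ball of at least this volume lying inside the annulus.

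First I would use the hypothesis $\AVR(g) > 0$ to conclude that $M$ is noncompact (if $M$ were compact, $\Vol_g B_g(x_0, r)$ would stay bounded as $r \to \infty$, forcing $\AVR(g) = 0$). Together with completeness and connectedness, the image of $d_g(x, \cdot)$ is a closed connected subset of $[0,\infty)$ containing $0$, and since $M$ is noncompact Hopf--Rinow rules out a bounded image, so $d_g(x, \cdot)$ attains every positive value. In particular one can pick $y \in M$ with $d_g(x, y) = 3r/2$. The triangle inequality then gives both
\[
B_g(y, r/2) \subset B_g(x, 2r) \setminus B_g(x, r) \quad \text{and} \quad B_g(x, 2r) \subset B_g(y, 7r/2).
\]

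Next I would apply Bishop--Gromov volume comparison at $y$ with radii $r/2 \leq 7r/2$ (valid because $\Rc_g \geq 0$) to get
\[
\Vol_g B_g(y, r/2) \geq (1/7)^n \Vol_g B_g(y, 7r/2) \geq (1/7)^n \Vol_g B_g(x, 2r).
\]
Combining with the two inclusions above yields
\[
\Vol_g B_g(x, 2r) \geq \Vol_g B_g(x, r) + \Vol_g B_g(y, r/2) \geq \Vol_g B_g(x, r) + (1/7)^n \Vol_g B_g(x, 2r),
\]
which rearranges to the claimed inequality. The only nontrivial ingredient is the existence of a point at distance exactly $3r/2$ from $x$, which is precisely why the global hypothesis $\AVR(g) > 0$ must be invoked rather than a purely local volume bound; everything else reduces to Bishop--Gromov and the triangle inequality, so I do not anticipate a real obstacle here.
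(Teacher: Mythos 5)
Your proof is correct and follows essentially the same route as the paper: the paper also places $y$ at distance $3r/2$ from $x$ (via a geodesic ray, which likewise exists by noncompactness), uses the same two inclusions $B_g(y,r/2)\subset B_g(x,2r)\setminus B_g(x,r)$ and $B_g(x,2r)\subset B_g(y,7r/2)$, and applies Bishop--Gromov at $y$ to get the $(1/7)^n$ lower bound on the annulus volume. The only cosmetic difference is how the point $y$ is produced.
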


\begin{proof}[Proof of Claim]
Let $\gamma: [0,\infty)\to M$ be a geodesic ray with $\gamma(0)=x$. Setting $y=\gamma(3r/2)$, we therefore have
\[
B_g(y,r/2)\subset B_g(x,2r)\setminus B_g(x,r).
\]
Then by volume comparison and the fact that $B_g(y,\frac{7}{2}r)\supset B_g(x,2r)$, we can see that
\[
\frac{\Vol_g B_g(y,r/2)}{(r/2)^n}\geq \frac{\Vol_g B_g(y,7r/2)}{(7r/2)^n}\geq \frac{\Vol_g B_g(x,2r)}{(7r/2)^n}\geq \frac{\Vol_g B_g(x,2r)}{(2r)^n}(4/7)^n= v_0 (4/7)^n.
\]
Then we obtain the upper bound 
\[
\Vol_g B_g(x,r)\leq \Vol_g B_g(x,2r)- \Vol_g B_g(y,r/2)\leq (2r)^n v_0 -(2r/7)^n v_0=(2r)^n (1-(1/7)^n) v_0.
\]
\end{proof}

\begin{claim}\label{Soblemclaim2}
There exists dimensional constants $\delta(n)>0$, $C_1(n)\geq 1$ such that for any measurable nonnegative function $f: B_g(x,r)\to \R$, there holds 
\[
\frac{1}{(2r)^n}\int_{B_g(x,r)}(1+f(x))^n\,dV_g \leq (1-\delta) v_0 +\frac{C_1}{(2r)^n} \int_{B_g(x,r)} f(x)^n\,dV_g.
\]
\end{claim}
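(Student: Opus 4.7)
The plan is to split the ball $B_g(x,r)$ into a region where $f$ is small and a region where $f$ is large, using a threshold $K=K(n)>0$ to be chosen after the fact. Each region will be handled by an elementary pointwise inequality, and the dimensional constants $\delta(n)$ and $C_1(n)$ will fall out by balancing them.

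For any $K>0$ and any nonnegative $f$, on the sublevel set $\{f\leq K\}$ we have the trivial bound $(1+f)^n\leq (1+K)^n$, while on the superlevel set $\{f>K\}$ we may write $1\leq f/K$, hence $1+f\leq f(1+\tfrac{1}{K})$, and therefore $(1+f)^n\leq (1+\tfrac{1}{K})^n f^n$. Combining these and applying Claim \ref{Soblemclaim1} to the volume of $B_g(x,r)$ gives
\[
\int_{B_g(x,r)}(1+f)^n\,dV_g \leq (1+K)^n \Vol_g B_g(x,r) + (1+\tfrac{1}{K})^n \int_{B_g(x,r)} f^n\,dV_g,
\]
which in turn is bounded above by
\[
(1+K)^n (1-(\tfrac{1}{7})^n)(2r)^n v_0 + (1+\tfrac{1}{K})^n \int_{B_g(x,r)} f^n\,dV_g.
\]

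The remaining task is to choose $K=K(n)$ so that the coefficient of $(2r)^n v_0$ is strictly less than $1$. Setting $\alpha:=(\tfrac{1}{7})^n$, this requires $(1+K)^n(1-\alpha)\leq 1-\delta$ for some $\delta=\delta(n)>0$; the choice $\delta(n):=\alpha/2=\tfrac{1}{2}\cdot 7^{-n}$ leads to the condition $(1+K)^n\leq \frac{1-\alpha/2}{1-\alpha}$, which admits a strictly positive solution $K=K(n)>0$ depending only on $n$. With this choice, setting $C_1(n):=(1+1/K(n))^n$ yields the claim.

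This argument is essentially routine once the correct splitting is identified; the only step requiring any care is ensuring that the factor $(1+K)^n$ on the "small $f$" region does not swallow the volume gain $(\tfrac{1}{7})^n$ extracted from Claim \ref{Soblemclaim1}, which is exactly what forces $K$ to be small in a quantitative, dimension-dependent way. I do not foresee a genuine obstacle here.
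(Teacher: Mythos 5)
Your proof is correct, but it takes a genuinely different route from the paper. The paper expands $(1+f)^n$ via the Binomial Theorem and controls each intermediate term $\int_{B_g(x,r)} f^k\,dV_g$, $1\leq k\leq n-1$, by H\"older's inequality followed by Young's inequality with exponents $p_k=\tfrac{n}{k}$, $q_k=\tfrac{n}{n-k}$, choosing the Young parameters $\ep_k$ small enough that the accumulated coefficient of $\Vol_g B_g(x,r)$ still beats the volume deficit $(1/7)^n$ from Claim \ref{Soblemclaim1}; you instead decompose the \emph{domain} into $\{f\leq K\}$ and $\{f>K\}$ and use the pointwise bounds $(1+f)^n\leq(1+K)^n$ and $(1+f)^n\leq(1+\tfrac1K)^nf^n$ respectively, with the same balancing act now concentrated in the single parameter $K$. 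Both arguments land on the same $\delta(n)=\tfrac12(1/7)^n$ and both yield explicit dimensional constants; since $K\geq \tfrac{1}{4n\cdot 7^n}$ (say), your $C_1(n)=(1+1/K)^n\leq(1+4n\cdot 7^n)^n$ is of the same order as (in fact slightly smaller than) the paper's $C_1(n)\leq n^{n+1}(2\cdot 7^n n^{n+1})^n$, so the explicit bound $C(n)\leq(10n)^{3n}$ computed in Remark \ref{boundonSobfromvolconstant} would survive your version unchanged. Your approach buys brevity and a one-parameter optimization; the paper's buys nothing essential here beyond being the more standard interpolation computation.
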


\begin{proof}[Proof of Claim]
Applying the Binomial Theorem gives
\[
\int_{B_g(x,r)}(1+f(x))^n\,dV_g \leq \Vol_g B_g(x,r)+\sum_{k=1}^n {n \choose k}\int_{B_g(x,r)}f(x)^k\,dV_g.
\]
For the $1\leq k\leq n-1$ terms, we write $p_k=\frac{n}{k}$, $q_k=\frac{n}{n-k}$ and apply H\"older's then Young's inequalities 
\ba
\int_{B_g(x,r)}f(x)^k\,dV_g&\leq \left(\int_{B_g(x,r)}f(x)^n\,dV_g\right)^{\frac{1}{p_k}}(\Vol_g B_g(x,r) )^{\frac{1}{q_k}}\\&\leq 
\frac{\ep_k^{q_k}}{q_k}\Vol_g B_g(x,r)+\frac{1}{\ep_k^{p_k} p_k}\int_{B_g(x,r)}f(x)^n\,dV_g.
\end{align*}
We may choose the $\ep_k$ small enough so that 
\[
(1-(1/7)^n)\left(1+\sum_{k=1}^{n-1} {n\choose k}\frac{\ep_k^{q_k}}{q_k}\right)\leq 1-\frac{1}{2}(1/7)^n.
\]
Therefore, by applying Claim \ref{Soblemclaim1}, we can see that with $\delta(n)=\frac{1}{2}(1/7)^n$ and 
\[
C_1(n)=1+\sum_{k=1}^{n-1} {n\choose k}\frac{1}{\ep_k^{p_k} p_k},
\]
the claim holds. 
\end{proof}

\vspace{5pt}
 
Now we recall some of the details from \cite{Brendle}. The following is taken out of the proof of their main theorem (specifically right up until the end of their Corollary 2.6 and the subsequent few lines), which we state as an independent result for clarity. 

\vspace{5pt}

\begin{prop}[Proof of {\cite[Theorem 1.1]{Brendle}}]\label{Brendlemainthm}
Suppose that $(M,g)$ satisfies $\Rc_g\geq 0$ and $\AVR(g)>0$. Let $D\subset M$ be some compact connected region with smooth boundary and $u\in C^\infty(D)$ is nonnegative with $u\equiv 0$ on $\partial D$ and 
\[
\int_D |\nabla^g u|\,dV_g=n\int_D u^{\frac{n}{n-1}}\,dV_g.
\]
Then for all $\rho>0$, there holds
\[
\Vol_g \{y\in M : d_g(x,y)< \rho \;\text{ for all } x\in D\}\leq \int_D \left(1+\rho \,u^{\frac{1}{n-1}}\right)^n \,dV_g.
\]
\end{prop}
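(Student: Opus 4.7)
The plan is to implement Brendle's ABP-type optimal-transport argument used in the proof of his sharp isoperimetric inequality on manifolds with $\Rc_g \geq 0$; indeed the proposition is essentially a by-product of that argument, extracted before passing to the $\rho \to \infty$ limit. First, I would solve an elliptic Neumann boundary-value problem for a potential $\varphi$ on $D$, of the schematic form
\begin{equation*}
\Div_g(u\,\nabla^g\varphi) = n\,u^{n/(n-1)} - |\nabla^g u|_g,
\end{equation*}
with boundary data on $\langle \nabla^g\varphi, \nu\rangle_g$ chosen so that contact points of the transport map introduced in the next step cannot occur on $\partial D$. Integrating this equation over $D$ (using $u\equiv 0$ on $\partial D$) yields a compatibility condition that is precisely the normalization hypothesis $\int_D |\nabla^g u|_g\,dV_g = n\int_D u^{n/(n-1)}\,dV_g$; degeneracy at $\{u=0\}$ is addressed by the standard nondegenerate approximation.

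For each $\rho > 0$, define the transport map $\Phi_\rho(x) := \exp_x\!\bigl(\rho\,\nabla^g\varphi(x)\bigr)$ and the contact set $\sA_\rho \subset D$ of those points $x$ which realize the maximum of $x' \mapsto \varphi(x') - \tfrac{1}{2\rho}\,d_g(x',\Phi_\rho(x))^2$ over $\overline{D}$. For any $y$ in the target set $\{y : d_g(x,y)<\rho\ \forall x \in D\}$, the function $\varphi - \tfrac{1}{2\rho}d_g(\cdot,y)^2$ attains its maximum in $\overline{D}$; the chosen boundary condition forbids a boundary maximum, so some interior contact point $x$ satisfies $\Phi_\rho(x) = y$. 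At such a contact point, second-order optimality combined with Bishop-type Hessian comparison for the squared distance function (this is where $\Rc_g\geq 0$ enters, via Jacobi-field estimates along $t \mapsto \exp_x(t\rho\,\nabla^g\varphi(x))$) gives a matrix inequality for the linearization of $\Phi_\rho$. Taking the trace, using the PDE of the first step to identify $\Delta\varphi$ on $\sA_\rho$, and applying the arithmetic-geometric mean inequality to the eigenvalues of $\mathrm{Id}+\rho\,\nabla^2\varphi$ produces the pointwise Jacobian bound
\begin{equation*}
|\det d\Phi_\rho(x)| \leq \bigl(1 + \rho\,u(x)^{1/(n-1)}\bigr)^n.
\end{equation*}

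The conclusion then follows from the area formula applied to $\Phi_\rho|_{\sA_\rho}$:
\begin{equation*}
\Vol_g\bigl\{y : d_g(x,y) < \rho\ \forall x \in D\bigr\} \;\leq\; \int_{\sA_\rho} |\det d\Phi_\rho|\,dV_g \;\leq\; \int_D \bigl(1 + \rho\,u^{1/(n-1)}\bigr)^n\,dV_g.
\end{equation*}
The main obstacle is the technical execution of the Jacobian bound: the squared distance function is not smooth at cut-locus points, so the pointwise Hessian comparison must be replaced by a measure-theoretic statement showing that cut-locus contributions to the area formula are negligible, and the Jacobian bound must be interpreted in the weak sense of absolutely continuous measures. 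Brendle resolves this by showing that, up to a null set, $\sA_\rho$ avoids the cut locus of $\nabla^g\varphi$ and that the Bishop Hessian comparison extends to almost every contact point.
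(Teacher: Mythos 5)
Your proposal correctly reconstructs Brendle's ABP/optimal-transport argument — the Neumann problem $\Div_g(u\,\nabla^g\varphi)=nu^{\frac{n}{n-1}}-|\nabla^g u|_g$ whose compatibility condition is the stated normalization, the transport map $\Phi_\rho=\exp(\rho\,\nabla^g\varphi)$ covering the target set via interior contact points, the Jacobian bound $(1+\rho\,u^{\frac{1}{n-1}})^n$ from the Riccati/Jacobi comparison under $\Rc_g\geq 0$ together with AM--GM, and the area formula over the contact set — and this is precisely the argument the proposition is extracted from. The paper offers no independent proof: it states the result verbatim as the content of the proof of Theorem 1.1 in \cite{Brendle} up through Corollary 2.6 there, so your sketch matches the intended source argument.
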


\vspace{5pt}

We apply Proposition \ref{Brendlemainthm} with\footnote{The smooth boundary condition can be accomplished by padding $B_g(x,r)$ slightly by taking $B_g(x,r)\subset D_\ep$, where $D_\ep$ has smooth boundary and is contained in the $\ep$-ball of $B_g(x,r)$ (see {\cite[Lemma 2.6]{WangA}}), then take $\ep\to 0$.}  $D=B_g(x,r)$ and $\rho=3r$ to obtain 
\[
\Vol_g B_g(x,2r)=\Vol_g \{y\in M : d_g(p,y)<3r \;\text{ for all } p\in B_g(x,r)\} \leq \int_{B_g(x,r)}\left(1+3r \,u^{\frac{1}{n-1}}\right)^n \,dV_g.
\]
Now divide both sides by $(2r)^n$ and apply Claim \ref{Soblemclaim2} with $f=3r u^{\frac{1}{n-1}}$ to obtain
\[
v_0\leq (1-\delta(n)) v_0 +\frac{C_1}{(2r)^n} \int_{B_g(x,r)} \left(3r\, u^{\frac{1}{n-1}}\right)^n\,dV_g.
\]
Rearranging and writing $C_2(n)=\frac{C_1 3^n}{\delta 2^n}$ gives
\[
v_0\leq C_2 \int_{B_g(x,r)} u^{\frac{n}{n-1}}\,dV_g.
\]
Therefore, by applying our scaling condition \eqref{Soblemscalecond}, we immediately see
\be\label{Sobineqe2}
\int_{B_g(x,r)}|\nabla^g u|_g\,dV_g=n \int_{B_g(x,r)} u^{\frac{n}{n-1}}\,dV_g\geq n v_0^{\frac{1}{n}}C_2^{-\frac{1}{n}}\left(\int_{B_g(x,r)} u^{\frac{n}{n-1}}\,dV_g\right)^{\frac{n-1}{n}}.
\ee
Thus we obtain an $L^1$-Sobolev inequality for the given function $u$, which is scale-invariant. So we may scale back to our original value of $r$ and therefore may assume that \eqref{Sobineqe2} holds for all nonnegative $u\in C^\infty(B_g(x,r))$ with $u\equiv 0$ on $\partial B_g(x,r)$. Now it is well-known that an $L^1$-Sobolev inequality implies an $L^2$-Sobolev inequality (cf. \cite{Li}). We provide a brief proof for the reader's convenience and to verify the proper dependence on $v_0$. By the $L^1$-Sobolev inequality \eqref{Sobineqe2} and Cauchy-Schwarz, we have
\ba
\left(\int_{B_g(x,r)} u^{\frac{2n}{n-2}} \,dV_g\right)^{\frac{2(n-1)}{n}}&=\left(\int_{B_g(x,r)} \left(u^{\frac{2(n-1)}{n-2}}\right)^{\frac{n}{n-1}} \,dV_g\right)^{\frac{2(n-1)}{n}}\\&\leq
\left(n v_0^{\frac{1}{n}}C_2^{-\frac{1}{n}}\right)^{-2} \left(\int_{B_g(x,r)} |\nabla^g u^{\frac{2(n-1)}{n-2}}|_g\,dV_g\right)^2\\&=
\left(\left(n  v_0^{\frac{1}{n}}C_2^{-\frac{1}{n}}\right)\inv \frac{2(n-1)}{n-2}\right)^2\left(\int_{B_g(x,r)} u^{\frac{n}{n-2}}|\nabla^g u|_g\,dV_g\right)^2\\&\leq
C(n) v_0^{-\frac{2}{n}}\left(\int_{B_g(x,r)} u^{\frac{2n}{n-2}} \,dV_g\right)\left(\int_{B_g(x,r)} |\nabla^g u|_g^2 \,dV_g\right).
\end{align*}
We obtain the desired $L^2$-Sobolev inequality by rearranging the above and approximation. This completes the proof of the lemma. 
\end{proof} 	

\vspace{5pt}

\begin{remark}\label{boundonSobfromvolconstant}
By taking the $\ep_k=(2\cdot 7^n n^{n+1})\inv$ in the proof of Claim \ref{Soblemclaim2}, we can calculate a naive bound on the dimensional constant in Lemma \ref{SobfromAVR2} when $n\geq 4$ of 
\[
C(n)\leq 2^n \underbrace{\left(\frac{2(n-1)}{n(n-2)}\right)^2}_{\leq 2} C_2^{\frac{2}{n}}\leq 2^{n+1}\left[\left(\frac{3}{2}\right)^n\underbrace{2\cdot 7^n}_{=1/\delta}\underbrace{n^{n+1}(2\cdot 7^n n^{n+1})^n}_{\leq C_1}\right]^{\frac{2}{n}}\leq (10n)^{3n}.
\]
\end{remark}

\vspace{5pt}

As previously mentioned, when $\Rc_g\geq 0$, a positive lower bound on the asymptotic volume ratio $\AVR(g)$ implies a lower bound on \emph{all} volume ratios by volume comparison. Therefore Lemma \ref{SobfromAVR2} immediately yields the following corollary. 

\vspace{5pt}

\begin{cor}\label{SobfromAVRcor}
For any $n\geq 3$ and $C(n)$ as in Lemma \ref{SobfromAVR2}, the following holds. Let $(M^n,g)$, $n\geq 3$ be a complete Riemannian manifold (not assumed bounded curvature) such that $\Rc_{g}\geq 0$ and $\AVR(g)>0$. Then $\Sob(M, g)\leq [C(n)(\omega_n \AVR(g))^{-\frac{2}{n}}, 0]$.
\end{cor}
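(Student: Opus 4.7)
The plan is to reduce the global Sobolev inequality on $M$ to the local one already established in Lemma~\ref{SobfromAVR2}, using Bishop--Gromov volume comparison to transfer the hypothesis on the asymptotic volume ratio into a uniform lower bound on \emph{every} finite volume ratio. Concretely, since $\Rc_g\geq 0$, Bishop--Gromov tells us that the function $r\mapsto \Vol_g B_g(x,r)/(\omega_n r^n)$ is monotone non-increasing, and its limit as $r\to\infty$ equals $\AVR(g)$ independent of the basepoint $x$. In particular, for every $x\in M$ and every $r>0$ one has
\[
\frac{\Vol_g B_g(x,r)}{r^n}\geq \omega_n \AVR(g).
\]

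Next, given any compactly supported $u\in W^{1,2}(M)$, choose a basepoint $x\in M$ and a radius $r>0$ large enough that $\supp u\subset B_g(x,r)$. Set $v_{x,r}:=\Vol_g B_g(x,r)/r^n$, which is $\geq \omega_n\AVR(g)>0$ by the previous step. Applying Lemma~\ref{SobfromAVR2} with this exact value of the volume ratio yields
\[
\Sob(B_g(x,r))\leq \bigl[C(n)\,v_{x,r}^{-2/n},\,0\bigr],
\]
and since $v\mapsto v^{-2/n}$ is decreasing, the Sobolev constant is bounded by $C(n)(\omega_n\AVR(g))^{-2/n}$. Evaluating the inequality on our chosen $u$ (which lives inside $B_g(x,r)$) gives the desired bound for this particular function; letting $u$ range over all compactly supported functions in $W^{1,2}(M)$, and noting that each choice only requires picking $r$ sufficiently large, shows $\Sob(M,g)\leq [C(n)(\omega_n\AVR(g))^{-2/n},\,0]$.

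The proof is essentially routine once Lemma~\ref{SobfromAVR2} is in hand; the only conceptual point is recognizing that Bishop--Gromov converts the asymptotic hypothesis into a pointwise one, so there is no obstacle to pushing the local Sobolev inequality out to all of $M$. No new dimensional constants are introduced, which is why the constant $C(n)$ from Lemma~\ref{SobfromAVR2} is inherited verbatim. (Mild point to check: one should verify the application of Lemma~\ref{SobfromAVR2} is valid for the specific geodesic ball $B_g(x,r)$, which may not have smooth boundary; this is handled exactly as in the proof of that lemma by approximating from outside with smooth domains as in \cite[Lemma 2.6]{WangA} and passing to the limit, so no additional work is needed.)
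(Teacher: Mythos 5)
Your proof is correct and follows exactly the route the paper intends: Bishop--Gromov monotonicity converts $\AVR(g)>0$ into the uniform lower bound $\Vol_g B_g(x,r)/r^n\geq \omega_n\AVR(g)$ for all $x$ and $r$, after which Lemma \ref{SobfromAVR2} applied to arbitrarily large balls (with the exact volume ratio, then using that $v\mapsto v^{-2/n}$ is decreasing) gives the global inequality. The paper states this corollary as an immediate consequence with precisely this one-line justification, so your write-up matches its argument.
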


\vspace{5pt}

\section{Evolution of distances and curvature concentration along the Ricci flow}\label{sectiondistance}

\vspace{5pt}

In this section, we first give two results which describe how the distance function behaves under the Ricci flow. In particular, these results will allow us to conclude that the radius of geodesic balls stay uniformly controlled both forward and backward in time. The first is the well-known ``Shrinking Balls Lemma" {\cite[Corollary 3.3]{ST}} which controlls the rate at which distances can contract.

\vspace{5pt}

\begin{lem}[Shrinking Balls \cite{ST}]\label{SBL}
For any $n\in\N$ and $\beta(n)=8\sqrt{2/3}(n-1)\geq 1$, the following holds. Suppose $(M^n,g(t))$, $t\in [0,T]$ is a (not necessarily complete) $n$-dimensional Ricci flow such that $B_{g(0)}(x_0, r)\subset \subset M$ and $|\Rm|_{g(t)}\leq c_0/t$ on $B_{g(0)}(x_0,r)\cap B_{g(t)}(x_0,r-\beta\sqrt{c_0 t})$ for each $t\in (0,T]$ and some $c_0>0$. Then for any $t\in [0,T]$, there holds 
\[
B_{g(t)}(x_0, r-\beta\sqrt{c_0 t})\subset B_{g(0)}(x_0, r).
\]
\end{lem}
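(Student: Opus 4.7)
The plan is a standard continuity argument combined with Hamilton's distance distortion estimate, which is how Simon--Topping originally prove this. Set $\rho(t)=r-\beta\sqrt{c_0 t}$ and let $S\subset [0,T]$ be the set of times $t$ for which $B_{g(t)}(x_0,\rho(t))\subset B_{g(0)}(x_0,r)$. Clearly $0\in S$, and continuity of $t\mapsto d_{g(t)}(x_0,\cdot)$ and of $\rho$ makes $S$ closed. To finish it suffices to show $S$ is relatively open in $[0,T]$, equivalently to rule out a first failure time $t_*\in(0,T]$.

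At such a $t_*$, by a boundary extraction argument, there exists $z_*\in M\setminus B_{g(0)}(x_0,r)$ achieving $d_{g(t_*)}(x_0,z_*)=\rho(t_*)$: indeed, taking a minimizing $g(t_*)$-geodesic $\gamma$ from $x_0$ to any $z\in B_{g(t_*)}(x_0,\rho(t_*))\setminus B_{g(0)}(x_0,r)$, the first exit point of $\gamma$ from $B_{g(0)}(x_0,r)$ has $g(t_*)$-distance to $x_0$ at most $\rho(t_*)$ and, by definition of $t_*$, at least $\rho(t_*)$. The minimizing $g(t_*)$-geodesic $\gamma_*$ from $x_0$ to this extremal $z_*$ then lies entirely in the closure of $B_{g(t_*)}(x_0,\rho(t_*))$, and (by the definition of $t_*$ applied to $t$ slightly less than $t_*$, together with continuity) it also lies in $\overline{B_{g(0)}(x_0,r)}$; hence $\gamma_*$ is contained in the region where the hypothesis $|\Rm|_{g(t_*)}\le c_0/t_*$ is available.

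Apply Hamilton's distance distortion lemma with $r_0=\sqrt{c_0 t_*}$: since $|\Rc|_{g(t_*)}\le(n-1)c_0/t_*$ on $B_{g(t_*)}(x_0,r_0)\cup B_{g(t_*)}(z_*,r_0)$ (both balls lie in the good region, the second by volume-ball comparison along $\gamma_*$ extended a little), we obtain
\[
\liminf_{t\uparrow t_*}\frac{d_{g(t)}(x_0,z_*)-d_{g(t_*)}(x_0,z_*)}{t-t_*}\le \frac{2(n-1)}{3}\Big(\tfrac{2}{3}\tfrac{(n-1)c_0}{t_*}r_0+\tfrac{2}{r_0}\Big),
\]
which, after a small arithmetic simplification, reads $\dot d_-\le \tfrac{K(n)\sqrt{c_0}}{\sqrt{t_*}}$ in the barrier sense, where $K(n)$ is at most $\tfrac{1}{2}\beta(n)\sqrt{c_0}/\sqrt{c_0}=\tfrac{1}{2}\beta(n)$ by the stated choice $\beta(n)=8\sqrt{2/3}(n-1)$. (This is exactly the calibration that fixes the value of $\beta$.)

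Running the same Hamilton estimate along $\gamma_*$ for $t$ slightly less than $t_*$ (where the good-region hypothesis still holds by continuity and closedness of $S$), we integrate from $0$ to $t_*$ to get
\[
d_{g(t_*)}(x_0,z_*)\ge d_{g(0)}(x_0,z_*)-\beta\sqrt{c_0 t_*}.
\]
Since $z_*\notin B_{g(0)}(x_0,r)$, the right-hand side is $\ge r-\beta\sqrt{c_0 t_*}=\rho(t_*)$, with strict inequality unless $d_{g(0)}(x_0,z_*)=r$; but then the geodesic from $x_0$ to $z_*$ at $g(0)$ is contained in $\overline{B_{g(0)}(x_0,r)}$ and we may pass to an interior point to get a strict inequality, contradicting $d_{g(t_*)}(x_0,z_*)=\rho(t_*)$ and the minimality of $t_*$.

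The main technical obstacle is the first paragraph after the setup: verifying that the minimizing geodesic at time $t_*$ stays in the good curvature region, so that Hamilton's estimate is actually applicable. The hypothesis of the lemma is carefully written exactly to support this (control on $B_{g(0)}(x_0,r)\cap B_{g(t)}(x_0,\rho(t))$), and the continuity argument is tailored to ensure both containments simultaneously at the putative first failure time.
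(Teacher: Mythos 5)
The paper does not prove this lemma; it is imported verbatim from Simon--Topping \cite{ST} (Corollary 3.3 there), so there is no internal proof to compare against. Your outline is a reconstruction of the Simon--Topping argument (open--closed in time plus the Hamilton--Perelman distance-distortion estimate), which is the right strategy, but several steps as written do not go through.

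First, the distance-distortion inequality is stated in the wrong direction: to prevent the ball from shrinking too fast you need a \emph{lower} bound on the rate of change of distance, i.e.\ $\liminf$ of the difference quotient $\geq -C(n)\sqrt{c_0/t}$ in the barrier sense; an upper bound on $\dot d$ is vacuous here. Relatedly, your choice $r_0=\sqrt{c_0t_*}$ does not calibrate correctly: Hamilton's estimate gives a decay rate controlled by $2(n-1)\bigl(\tfrac{2}{3}Kr_0+r_0^{-1}\bigr)$ with $K=c_0/t_*$, and only the optimizing choice $r_0=\sqrt{3t_*/(2c_0)}$ makes both terms scale as $\sqrt{c_0/t_*}$; that choice is precisely what produces the constant $\beta(n)=8\sqrt{2/3}\,(n-1)$ after integrating $\int_0^{t}s^{-1/2}\,ds=2\sqrt{t}$. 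With $r_0=\sqrt{c_0t_*}$ the two terms scale as $c_0^{3/2}$ and $c_0^{-1/2}$ respectively, and the claimed bound $\tfrac12\beta(n)\sqrt{c_0/t_*}$ fails for $c_0$ small or large.

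Second, and more seriously, the applicability of Hamilton's lemma is not justified. The hypothesis only controls curvature on $B_{g(0)}(x_0,r)\cap B_{g(t)}(x_0,\rho(t))$, whereas the lemma needs a Ricci bound on full metric balls $B_{g(t_*)}(x_0,r_0)\cup B_{g(t_*)}(z_*,r_0)$. Your extremal point $z_*$ lies on the boundary of $B_{g(0)}(x_0,r)$, so the ball $B_{g(t_*)}(z_*,r_0)$ will in general leave the region where any curvature control is assumed; ``volume-ball comparison along $\gamma_*$'' is not a mechanism that fixes this. This is exactly the delicate point in \cite{ST}, where it is handled by running the estimate for points strictly inside the good region and only passing to the boundary at the end (and where care is also taken with existence of minimizing geodesics, since the flow is not assumed complete). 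Finally, the concluding ``contradiction'' is only the non-strict inequality $d_{g(t_*)}(x_0,z_*)\geq\rho(t_*)$, which is consistent with your setup; the correct observation is that this non-strict bound for \emph{every} $z\notin B_{g(0)}(x_0,r)$ already yields the containment of the open ball $B_{g(t_*)}(x_0,\rho(t_*))$, so no strictness upgrade is needed. As written, the proposal is a plausible sketch of the known proof rather than a complete argument.
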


\vspace{5pt}

Obtaining control in the other direction (i.e., the rate at which distances can expand) is historically more challening; If it is known that the flow enjoys a lower Ricci bound \emph{for all times}, then Simon \& Topping have proved an analogous ``Expanding Balls Lemma" {\cite[Lemma 3.1]{ST}}. The issue is however that lower Ricci curvature bounds are not respected under Ricci flow when $n\geq 4$. Therefore it is desirable to replace the lower Ricci bound assumption with a lower \emph{scalar curvature} bound (which is preserved) along with some volume estimates (see {\cite[Lemma 3.7]{He} and {\cite[Lemma 2.2]{LeeTam}} for such results of this flavour). The improvement that we offer here is uniform control on the rate of the expansion depending only on the quotient of volume ratios, albeit for a short amount of time. Importantly, the time interval does \emph{not} depend on the given volume bounds. 

\vspace{5pt}

\begin{lem}[Expanding Balls]\label{EBL}
For any $n \in \mathbb{N}$ and any $\alpha, \sigma > 0$, there exists $\widehat{T}(n, \alpha, \sigma) > 0$ such that the following holds. Let $(M^n, g(t))$, $t\in [0,T]$ be a complete bounded curvature Ricci flow such that for some $x_0\in M$, $b_0>0$, and $c_0>0$ there holds: 
\begin{enumerate}[(a)]
    \item $\inf_{x\in M}\sR_{g(0)}(x) \geq -\sigma$,
    \item $\sup_{x\in M}|\Rm|_{g(t)}(x) \leq \alpha t^{-1}$ for all $t\in (0,T]$,
    \item $\Vol_{g(0)}B_{g(0)}(x_0, 2) \leq b_0 $, and 
    \item $\Vol_{g(t)}B_{g(t)}(x, r) \geq c_0  r^n$ whenever $x\in B_{g(0)}(x_0, 1)$, $r\in (0,\frac{1}{2}]$, and $t\in (0,T]$.
\end{enumerate}
Then for all $t \in [0, T\land \widehat{T}]$, we have $B_{g(0)}(x_0, 1)  \subset B_{g(t)}(x_0, 2^{n+2} b_0c_0\inv)$.
\end{lem}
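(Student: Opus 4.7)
My plan is to prove the containment via a volume-packing argument in $(M, g(t))$, using preservation of the scalar curvature lower bound together with the Shrinking Balls Lemma to substitute for the absence of a lower Ricci bound (which would fail to be preserved under the flow for $n \geq 4$, ruling out the standard distance-distortion approach of Simon-Topping).

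First, the maximum principle applied to $\partial_t \sR = \Delta \sR + 2|\Rc|^2$ on the complete bounded-curvature flow preserves $\sR_{g(t)} \geq -\sigma$ for all $t \in [0, T]$, and integrating $\partial_t dV_{g(t)} = -\sR_{g(t)}\,dV_{g(t)}$ yields $\Vol_{g(t)}\Omega \leq e^{\sigma t}\Vol_{g(0)}\Omega$ for any measurable $\Omega$. I then restrict to $\widehat T = \widehat T(n, \alpha, \sigma)$ small enough that $e^{\sigma \widehat T} \leq 2$ and $\beta\sqrt{\alpha \widehat T} \leq 1/2$ (with $\beta = \beta(n)$ as in Lemma \ref{SBL}). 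For each $y \in B_{g(0)}(x_0, 1)$ and $t \leq T \land \widehat T$, the Shrinking Balls Lemma (with its ``$c_0$'' set equal to $\alpha$) then gives $B_{g(t)}(y,1/2) \subset B_{g(0)}(y, 1/2 + \beta\sqrt{\alpha t}) \subset B_{g(0)}(y,1) \subset B_{g(0)}(x_0, 2)$, and hypothesis (c) yields $\Vol_{g(t)} B_{g(0)}(x_0, 2) \leq 2b_0$.

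Next, I run a greedy $1$-separation: set $y_1 = x_0$ and inductively pick $y_{k+1} \in B_{g(0)}(x_0, 1) \setminus \bigcup_{i \leq k} B_{g(t)}(y_i, 1)$ until this set is empty. By construction $d_{g(t)}(y_i, y_j) \geq 1$ for $i \neq j$, so the balls $B_{g(t)}(y_i, 1/2)$ are pairwise disjoint; hypothesis (d) gives each of volume $\geq c_0 2^{-n}$, and summing using the previous containment gives $N c_0 2^{-n} \leq 2 b_0$. Hence the process terminates with $N \leq 2^{n+1} b_0 c_0^{-1}$, producing a cover $\{B_{g(t)}(y_i, 1)\}_{i=1}^{N}$ of $B_{g(0)}(x_0, 1)$. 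Connectedness of $B_{g(0)}(x_0, 1)$ forces the intersection graph of this cover to be connected (otherwise the cover would split $B_{g(0)}(x_0, 1)$ as a disjoint union of nonempty relatively open sets); since pairwise intersecting $1$-balls have centers within $g(t)$-distance $2$, the graph-diameter bound gives $d_{g(t)}(y_1, y_i) \leq 2(N-1)$. For any $y \in B_{g(0)}(x_0, 1)$, choosing $y_i$ with $y \in B_{g(t)}(y_i, 1)$ and recalling $y_1 = x_0$ yields $d_{g(t)}(x_0, y) < 2N \leq 2^{n+2} b_0 c_0^{-1}$, which is the claimed containment.

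The main technical point is to arrange that $\widehat T$ depends only on $(n, \alpha, \sigma)$, not on the volume parameters $(b_0, c_0)$; this is accomplished by funneling all $(b_0, c_0)$-dependence into the cardinality $N$ of the packing (and hence into the output radius), while $\widehat T$ is pinned down only by the two constraints $e^{\sigma t} \leq 2$ and $\beta\sqrt{\alpha t} \leq 1/2$.
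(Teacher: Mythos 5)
Your proof is correct and follows essentially the same strategy as the paper's: the same choice of $\widehat T=\min\{\log 2/\sigma,\,(4\beta^2\alpha)^{-1}\}$, the same use of the preserved scalar lower bound plus $\partial_t dV=-\sR\,dV$ to get $\Vol_{g(t)}B_{g(0)}(x_0,2)\leq 2b_0$, the same Shrinking-Balls containment $B_{g(t)}(y,1/2)\subset B_{g(0)}(x_0,2)$, and the same maximal $1$-separated packing giving $N\leq 2^{n+1}b_0c_0^{-1}$. The only (cosmetic) difference is the final chaining step -- you pass through connectedness of the intersection graph of the cover, whereas the paper chains the $g(0)$-minimizing geodesic through the cover; both yield $d_{g(t)}(x_0,y)\leq 2N$.
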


\begin{proof}
We begin by setting our maximal time interval:
\[
\widehat T= \min\left\{ \frac{\log 2}{\sigma}, \frac{1}{4\beta^2 \alpha}\right\}.
\]
Here $\beta=\beta(n)$ is from Lemma \ref{SBL}. Fix $\tau\in [0,T\land \widehat T]$ and define 
\[
R = \inf\{r>0 : B_{g(0)}(x_0, 1) \subset B_{g(\tau)}(x_0, r)\}.
\]
We will show that $R\leq 2^{n+2}b_0c_0\inv$ which will prove the result. By definition, there exists $y \in M$ such that $d_{g(0)}(x_0, y) = 1$ and $d_{g(\tau)}(x_0, y) = R$. Let $\gamma : [0, 1]\to M$ be a minimizing unit-speed $g(0)$-geodesic with $\gamma(0)=x_0$ and $\gamma(1)=y$. Notice that because $\widehat T\leq \frac{1}{4\beta^2 \alpha}$, we have that $B_{g(\tau)}(x, 1/2)\subset B_{g(0)}(x,1)$ for all $x\in M$ by Lemma \ref{SBL}. Thus by taking $z_1=x_0$, we can inductively choose some set of points $\{z_j\}_{j=1}^{k}\subset B_{g(0)}(x_0,1)$ such that $\{B_{g(\tau)}(z_j, \frac{1}{2})\}_{j=1}^k$ are mutually disjoint and
\[
\bigsqcup_{j=1}^{k} B_{g(\tau)}(z_j, 1/2) \subset B_{g(0)}(x_0,2)\;\; \text{ and } \;\; B_{g(0)}(x_0, 1) \subset \bigcup_{j=1}^{k} B_{g(\tau)}(z_j, 1).
\]

Then since $\gamma \subset \overline{B_{g(0)}(x_0, 1)}$, we can obtain a lower bound on $d_{g(\tau)}(x_0,y)$ by estimating the length of $\gamma$ with respect to the metric $g(\tau)$ as 
\be\label{EBLRboundedk}
R = d_{g(\tau)}(x_0, y) \leq \text{Length}_{g(\tau)}(\gamma)\leq 2k.
\ee

By our assumptions on the bounds of volumes and the assumed disjointness of $\{B_{g(\tau)}(z_j, \frac{1}{2})\}_{j=1}^k$, we can obtain an upper bound on $k$:
\be\label{EBLboundonk}
\begin{split}
k c_0  (1/2)^n &\leq \sum_{j=1}^k \Vol_{g(\tau)} B_{g(\tau)}(z_j, 1/2) \\&
\leq \Vol_{g(\tau)}(B_{g(0)}(x_0, 2)) \\&
\leq 2 \Vol_{g(0)}(B_{g(0)}(x_0, 2)) \leq 2 b_0.
\end{split}
\ee
In the third inequality, we have used the evolution of the volume form $\partial_t dV_{g(t)} = -\sR_{g(t)} dV_{g(t)}$, the lower scalar bound $\sR_{g(0)}\geq -\sigma$ (which is preserved along complete bounded curvature flows), and the condition on the existence time $\sigma \widehat T\leq \log 2$. By \eqref{EBLRboundedk} and \eqref{EBLboundonk}, we have the desired bound $R\leq 2^{n+2} b_0 c_0\inv$.
\end{proof}

\vspace{5pt}

\begin{remark}
Because local lower bounds on the scalar curvature $\sR_{g(0)}$ are semi-preserved along all Ricci flows with $\alpha/t$ curvature control \cite{BingLongChen}, it is clear that Lemma \ref{EBL} can be localized (i.e., for some compact region contained within a possibly incomplete and/or unbounded curvature flow), but the stated version will suffice for our purposes. 
\end{remark}

\vspace{5pt}

We end this section by giving a lemma demonstrating how the local curvature concentration evolves under the Ricci flow. The proof is essentially that of {\cite[Lemma 2.1]{CCL}}, but we provide a brief sketch to demonstrate explicit estimates on the coefficients.

\vspace{5pt}

\begin{lem}\label{curvconcevolutionlem}
Suppose $(M,g(t))$, $n\geq 3$, $t\in [0,T]$ is a complete (possibly unbounded curvature) Ricci flow and $\phi: M\times [0,T]\to \R$ is a smooth nonnegative function with compact support. If $\square \phi\leq 0$ (here $\square=\partial_t-\Delta^{g(t)}$ is the heat operator), then for any $\beta>0$ there holds
\begin{align*}
\frac{d}{dt}\int \phi^2&(|\Rm|_{g(t)}^2+\beta)^{\frac{n}{4}}\,dV_{g(t)}\leq 
-\frac{1}{3}\int  |\nabla(\phi (|\Rm|_{g(t)}^2+\beta)^{\frac{n}{8}})|^2 \,dV_{g(t)}\\&\;\;\;+
10n \int |\nabla^{g(t)} \phi|_{g(t)}^2 (|\Rm|_{g(t)}^2+\beta)^{\frac{n}{4}}\,dV_{g(t)}+5n\int \phi^2(|\Rm|_{g(t)}^2+\beta)^{\frac{n}{4}+\frac{1}{2}}\,dV_{g(t)}.
\end{align*}
\end{lem}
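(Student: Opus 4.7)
The plan is to differentiate $\int \phi^2 u\,dV_{g(t)}$ with $u := (|\Rm|^2 + \beta)^{n/4}$, control every resulting term using Hamilton's evolution identity for $|\Rm|^2$, and track the constants carefully. Set $\rho := (|\Rm|_{g(t)}^2 + \beta)^{1/2}$, so that $u = \rho^{n/2}$ and the target gradient on the right-hand side is $|\nabla(\phi u^{1/2})|^2 = |\nabla(\phi \rho^{n/4})|^2$. Using $\partial_t dV_g = -\sR_g\, dV_g$, the hypothesis $\square \phi \leq 0$ (so that $\phi_t \leq \Delta \phi$ and hence $2\phi u \phi_t \leq 2\phi u \Delta \phi$), together with $\partial_t u = \Delta u + \square u$, one integration by parts for each Laplacian yields the baseline
\[
\frac{d}{dt}\int \phi^2 u\, dV_g \leq -\int 2u|\nabla\phi|^2\,dV_g - \int 4\phi\, \nabla u\cdot\nabla\phi\,dV_g + \int \phi^2 \square u\,dV_g - \int \sR_g \phi^2 u\,dV_g.
\]

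The next step is to estimate $\square u$. Chain rule gives $\square u = \tfrac{n}{2}\rho^{n/2-1}\square \rho - \tfrac{n}{2}(\tfrac{n}{2}-1)\rho^{n/2-2}|\nabla\rho|^2$, and Hamilton's evolution inequality $\square |\Rm|^2 \leq -2|\nabla\Rm|^2 + 16|\Rm|^3$ together with the identity $\square \rho^2 = 2\rho\, \square \rho - 2|\nabla\rho|^2$ gives $2\rho\, \square\rho \leq -2|\nabla\Rm|^2 + 2|\nabla\rho|^2 + 16\rho^3$. Substituting this into the expression for $\square u$ and using Kato's inequality $|\nabla\rho|^2 \leq |\nabla\Rm|^2$ to trade one copy of $-|\nabla\Rm|^2$ for $-|\nabla\rho|^2$, the three $|\nabla\rho|^2$ terms collapse cleanly to
\[
\square u \leq -\tfrac{n(n-2)}{4}\rho^{n/2-2}|\nabla\rho|^2 + 4n\,\rho^{n/2+1}.
\]
Since $|\nabla u^{1/2}|^2 = \tfrac{n^2}{16}\rho^{n/2-2}|\nabla\rho|^2$, this reads $\square u \leq -\tfrac{4(n-2)}{n}|\nabla u^{1/2}|^2 + 4n\,\rho^{n/2+1}$, with a \emph{dimension-uniform} coefficient bounded below by $4/3$ for all $n\geq 3$. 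The scalar curvature term is handled crudely through $|\sR_g| \leq n|\Rm| \leq n\rho$, contributing at most $n\int\phi^2 \rho^{n/2+1}\,dV_g$.

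For the cross term, writing $w := \phi u^{1/2} = \phi\rho^{n/4}$ and using $\nabla u = 2u^{1/2}\nabla u^{1/2}$, I apply Cauchy--Schwarz with a parameter $\mu > 0$,
\[
\left|\int 4\phi\, \nabla u\cdot\nabla\phi\,dV_g\right| \leq \tfrac{4}{\mu}\int \phi^2 |\nabla u^{1/2}|^2\,dV_g + 4\mu \int u|\nabla\phi|^2\,dV_g,
\]
choosing $\mu$ (depending on $n$) so that the net coefficient $-C := -\tfrac{4(n-2)}{n}+\tfrac{4}{\mu}$ on $\int \phi^2|\nabla u^{1/2}|^2$ stays strictly below $-1/3$. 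I then convert $-C\int \phi^2|\nabla u^{1/2}|^2$ into $-\tfrac{1}{3}\int |\nabla w|^2\,dV_g + B\int u|\nabla\phi|^2\,dV_g$ via the algebraic bound $|\nabla w|^2 \leq (1+\delta)\phi^2|\nabla u^{1/2}|^2 + (1+\delta^{-1})u|\nabla\phi|^2$, tuning $\delta$ so that $(1+\delta)^{-1}C = 1/3$, which forces $B = C/(3C-1)$. Collecting constants (e.g.\ $\mu = 6$ when $n=3$, $\mu = 4$ when $n\geq 4$), the accumulated $\int u|\nabla\phi|^2$ coefficient $-2 + 4\mu + B$ stays below $10n$, and the $\int \phi^2\rho^{n/2+1}$ coefficient is exactly $4n + n = 5n$, as claimed.

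The expected main obstacle is extracting a dimension-independent coefficient on the good gradient term. A direct Kato bound on $-\tfrac{n}{2}\rho^{n/2-2}|\nabla\Rm|^2$ only gives $-\tfrac{8}{n}|\nabla u^{1/2}|^2$, which degenerates as $n \to \infty$ and would force $n$-dependent coefficients on the RHS. The saving observation is that the second chain-rule contribution $-\tfrac{n}{2}(\tfrac{n}{2}-1)\rho^{n/2-2}|\nabla\rho|^2$ carries an $n^2$ prefactor that exactly cancels the $n^{-2}$ in $|\nabla u^{1/2}|^2 = \tfrac{n^2}{16}\rho^{n/2-2}|\nabla\rho|^2$; after combining the two negative gradient contributions with the aid of Kato, the residue $\tfrac{4(n-2)}{n}$ is uniformly bounded below, producing the $n$-independent $-1/3$ in the statement.
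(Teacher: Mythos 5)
Your proposal is correct and follows essentially the same route as the paper's proof: differentiate under the integral, integrate by parts against the compactly supported $\phi$, invoke Hamilton's evolution inequality $\square|\Rm|^2\leq -2|\nabla \Rm|^2+16|\Rm|^3$ together with Kato's inequality, absorb the cross term by weighted Cauchy--Schwarz, and exploit the $n^2$ cancellation between the chain-rule Hessian term and $|\nabla u^{1/2}|^2$ to obtain the dimension-free coefficient $-\tfrac13$. The only difference is organizational (you compute $\square u$ directly for $u=\rho^{n/2}$, whereas the paper expands $\square(\phi^2(|\Rm|^2+\beta)^{n/4})$ into five terms), and your constant tracking lands on the same $10n$ and $5n$.
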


\begin{proof}
In this proof, we will denote $\alpha=\frac{n}{4}$ and we will not write the metric $g(t)$ in any terms except for the volume form $dV_t$. We begin by differentiating under the integral, integrating by parts (since $\supp\,\phi$ is compact), using the evolution of volume $\partial_t \,dV_t=-\sR\,dV_t$, and $|\sR|\leq n|\Rm|$ (see for instance {\cite[Proposition 7.28]{Lee}}) which gives
\begin{align*}
\frac{d}{ dt}\int \phi^2(|\text{Rm}|^2+\beta)^{\alpha}\,dV_t&\leq 
\int \square\left(\phi^2(|\text{Rm}|^2+\beta)^{\alpha}\right)\,dV_t+n\int \phi^2(|\text{Rm}|^2+\beta)^{\alpha+\frac{1}{2}}\,dV_t.
\end{align*}
For the $\square$ term, we calculate

\begin{align*}
\int \square&\left(\phi^2(|\text{Rm}|^2+\beta)^{\alpha}\right)\,dV_t=  
\int 2\phi \square\phi (|\text{Rm}|^2+\beta)^{\alpha}\,dV_t-\int 2|\nabla \phi|^2 (|\text{Rm}|^2+\beta)^{\alpha}\,dV_t\\&
+\int \alpha\phi^2 (|\text{Rm}|^2+\beta)^{\alpha-1}\square |\Rm|^2\,dV_t-\int 4\alpha (\alpha-1) \phi^2 (|\text{Rm}|^2+\beta)^{\alpha-2} |\Rm|^2 |\nabla |\Rm||^2\,dV_t \\&
-\int 8\alpha \phi \langle \nabla \phi, \nabla |\Rm|\rangle |\Rm|(|\Rm|^2+\beta)^{\alpha-1}\,dV_t= \textbf{I}+\textbf{II}+\textbf{III}+\textbf{IV}+\textbf{V}.
\end{align*}
Clearly $\textbf{II}\leq 0$ and since $\square \phi\leq 0$, we have $\textbf{I}\leq 0$ as well. Also, by the evolution of the curvature tensor and Kato's inequality $|\nabla |T||\leq |\nabla T|$, we have
\ba
\textbf{III}&\leq 
-2\alpha\int\phi^2 (|\text{Rm}|^2+\beta)^{\alpha-2} |\Rm|^2|\nabla|\Rm||^2\,dV_t+16\alpha\int \phi^2 (|\text{Rm}|^2+\beta)^{\alpha+\frac{1}{2}} \,dV_t=\textbf{IIIa}+\textbf{IIIb}.
\end{align*}
Applying Cauchy-Schwarz and Young's inequality gives
\ba
\textbf{V}&\leq 
\frac{4\alpha}{\ep}\int |\nabla \phi|^2 (|\text{Rm}|^2+\beta)^{\alpha}\,dV_t+4\alpha\ep \int \phi^2 (|\text{Rm}|^2+\beta)^{\alpha-2} |\Rm|^2 |\nabla |\Rm||^2\,dV_t=\textbf{Va}+\textbf{Vb}.
\end{align*}
Now gather up the terms \textbf{IV}, \textbf{IIIa}, \textbf{Vb}, and set $\ep=\frac{1}{8}$ so that the coefficient is $c(\alpha):=\frac{3}{2}\alpha+4\alpha (\alpha-1)$, which satisfies $-c(\alpha)\in [\frac{2}{3}\alpha^2, 4\alpha^2]$. Then using Cauchy-Schwarz again gives
\begin{align*}
c(\alpha)&\int  \phi^2 (|\text{Rm}|^2+\beta)^{\alpha-2}|\text{Rm}|^2|\nabla |\text{Rm}||^2 \,dV_t=
c(\alpha)\frac{1}{\alpha^2}\int  \phi^2 |\nabla (|\text{Rm}|^2+\beta)^{\frac{\alpha}{2}}|^2 \,dV_t\\&\leq
c(\alpha)\frac{1}{\alpha^2}\left(\frac{1}{2}\int  |\nabla \phi (|\text{Rm}|^2+\beta)^{\frac{\alpha}{2}}|^2 \,dV_t-\int  |\nabla \phi|^2 (|\text{Rm}|^2+\beta)^{\alpha}\,dV_t\right)\\&\leq 
-\frac{1}{3}\int  |\nabla \phi (|\text{Rm}|^2+\beta)^{\frac{\alpha}{2}}|^2 \,dV_t+4\int  |\nabla \phi|^2 (|\text{Rm}|^2+\beta)^{\alpha}\,dV_t.
\end{align*}
The result follows by gathering like terms and simplifying.
\end{proof}

\vspace{5pt}

\section{A bounded curvature Ricci flow existence result}\label{sectionboundedcurv}

\vspace{5pt}

As part of the proof of Theorem \ref{main2}, we will require a short-time Ricci flow existence result for bounded curvature metrics with the properties that the flow enjoys $1/t$ curvature control and preservation of the local curvature concentration (see Theorem \ref{shorttimeexistence}). This result is similar to {\cite[Theorem 3.1]{CHL}}, which in turn is only a slight generalization of {\cite[Theorem 1.2]{CCL}}. Because of these similarities, we may omit some of the details in the proof of Theorem \ref{shorttimeexistence} when appropriate and refer the reader to those works. Before stating that however, we first prove an elementary lemma which states that the local curvature concentration can only grow at most at some uniform rate under bounded curvature flows.

\vspace{5pt}

\begin{lem}\label{uniformincreasingconcentrationlem}
Let $(M^n, g(t))$, $t\in [0,T]$, $n\geq 3$ be a complete bounded curvature Ricci flow such that
\[
\sup_{x\in M} \int_{B_{g(0)}(x,r_1)}|\Rm|_{g(0)}^{\frac{n}{2}}\,dV_{g(0)}\leq \sigma
\]
for some $r_1>0$, $\sigma>0$. Then for any $r_2<r_1$ and $\ep>0$, there exists $\eta>0$ such that for each $t\in [0,\eta]$, there holds 
\[
\sup_{x\in M} \int_{B_{g(t)}(x,r_2)}|\Rm|_{g(t)}^{\frac{n}{2}}\,dV_{g(t)}\leq \sigma+\ep.
\]
\end{lem}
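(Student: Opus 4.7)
The plan is to apply Lemma~\ref{curvconcevolutionlem} to a cutoff function centered at each base point $y\in M$ and then close with a Gr\"onwall-type argument. Since $|\Rm|_{g(t)}\leq C_0$ on $M\times[0,T]$ for some bound $C_0$, the metrics are uniformly equivalent ($e^{-2(n-1)C_0 t}g(0)\leq g(t)\leq e^{2(n-1)C_0 t}g(0)$) and the distances are uniformly comparable, so there exist $\eta_0>0$ and an intermediate radius $r_3\in(r_2,r_1)$ (depending only on $n, C_0, r_1, r_2$) such that $B_{g(t)}(y,r_2)\subset B_{g(0)}(y,r_3)$ for every $t\in[0,\eta_0]$ and $y\in M$.

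For each $y$, one constructs a smooth cutoff $\phi_y\colon M\times[0,\eta_0]\to[0,1]$ with $\phi_y\equiv 1$ on $B_{g(0)}(y,r_3)$, $\supp \phi_y\subset B_{g(0)}(y,r_1)$, $|\nabla^{g(t)}\phi_y|_{g(t)}\leq L(n,C_0,r_1-r_3)$, and $\square\phi_y\leq 0$. The sign condition on $\square\phi_y$ is the only nontrivial point; it can be achieved by a Perelman-style barrier that composes a decreasing bump with the evolving distance $d_{g(t)}(y,\cdot)$ suitably shifted in $t$ to dominate the positive contributions of $\partial_t d-\Delta^{g(t)} d$, which are themselves controlled via Laplacian comparison and $|\Rc|\leq (n-1)C_0$. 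Alternatively, one may relax the hypothesis of Lemma~\ref{curvconcevolutionlem} to $|\square\phi_y|\leq K$, introducing an extra term $2K\int_{\supp \phi_y}(|\Rm|^2+\beta)^{n/4}\,dV_{g(t)}$ absorbed into the estimates below.

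With such a cutoff in hand, set $F(t):=\int \phi_y^2(|\Rm|_{g(t)}^2+\beta)^{n/4}\,dV_{g(t)}$; applying Lemma~\ref{curvconcevolutionlem}, dropping the nonpositive gradient term, and using $(|\Rm|^2+\beta)^{1/2}\leq (C_0^2+\beta)^{1/2}$ on the highest-order integrand yields
\begin{align*}
F'(t)\leq 10nL^2\int_{B_{g(0)}(y,r_1)}(|\Rm|^2+\beta)^{n/4}\,dV_{g(t)}+5n(C_0^2+\beta)^{1/2}F(t).
\end{align*}
The first term is uniformly bounded by some $M=M(n,C_0,r_1,\beta,L)$, using $|\Rm|\leq C_0$, $dV_{g(t)}\leq e^{n(n-1)C_0 t}\,dV_{g(0)}$, and Bishop-Gromov on $g(0)$ with lower Ricci bound $-(n-1)C_0$. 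Gr\"onwall then gives $F(t)\leq e^{5n\sqrt{C_0^2+\beta}\,t}(F(0)+Mt)$. Since $\phi_y\equiv 1$ on $B_{g(0)}(y,r_3)\supset B_{g(t)}(y,r_2)$ and $F(0)\leq\int_{B_{g(0)}(y,r_1)}(|\Rm|^2+\beta)^{n/4}\,dV_{g(0)}$, sending $\beta\to 0^+$ by dominated convergence and taking the supremum over $y$ yields
\begin{align*}
\sup_{y\in M}\int_{B_{g(t)}(y,r_2)}|\Rm|_{g(t)}^{n/2}\,dV_{g(t)}\leq e^{5nC_0 t}\sigma+M' t,
\end{align*}
which is at most $\sigma+\ep$ for $t\in[0,\eta]$ with $\eta$ chosen small enough depending on $n,C_0,r_1,r_2,\sigma,\ep$. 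The main obstacle is the cutoff construction uniform in the base point $y$ with $\square\phi_y\leq 0$; this is a routine but technical matter in the bounded-curvature setting, and everything else is essentially integration.
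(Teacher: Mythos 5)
Your proposal is correct and follows essentially the same route as the paper: a cutoff supported in $B_{g(0)}(\cdot,r_1)$, the evolution inequality for $\int\phi^2(|\Rm|^2+\beta)^{\frac{n}{4}}\,dV$, and the bounded-curvature hypothesis to make all constants uniform before integrating in time. The paper sidesteps the one delicate point you flag---producing a cutoff with $\square\phi\le 0$---by using a \emph{static} cutoff depending only on $d_{g(0)}$ and invoking the version of the evolution inequality from {\cite[Lemma 4.1]{CM}} that does not require $\phi$ to be a supersolution of the heat equation, which is precisely your second proposed workaround.
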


\begin{proof}
For the sake of brevity, we write $K=\sup|\Rm(x,t)|+1$ where the supremum is taken over all $(x,t)\in M\times [0,T]$. In what follows, we will write $C$ to be some constant possibly depending on $n, r_1, r_2, \sigma, K$ which may vary line-by-line. Let $\psi: \R\to \R$ be some smooth decreasing cutoff function such that $\psi\equiv 1$ on $(-\infty, 0]$, $\psi\equiv 0$ on $[1,\infty)$ and $\psi'\geq -2$. Now for any $x\in M$, let $\phi_x: M \to \R$ be defined as
\[
\phi_x(y)=\psi\left(\frac{2}{r_1-r_2}d_{g(0)}(x, y)-\frac{r_1+r_2}{r_1-r_2}\right).
\]
Then $\supp\,\phi_x\subset B_{g(0)}(x, r_1)$, $\phi_x \equiv 1$ on $B_{g(0)}(x, \frac{r_1+r_2}{2})$, and $|\nabla^{g(0)} \phi_x|^2_{g(0)}\leq C$. By the fact that the flow has bounded curvature, we may assume that $\eta>0$ is small enough so that for all $(x,t)\in M\times [0,\eta]$, we have 
\[
\begin{cases}
&B_{g(t)}(x,  r_2)\subset B_{g(0)}(x, \frac{r_1+r_2}{2}), \\
&|\nabla^{g(t)} \phi_x|^2_{g(t)}\leq 2|\nabla^{g(0)} \phi_x|^2_{g(0)}, \text{ and } \\
&\Vol_{g(t)}B_{g(0)}(x,r_1)\leq C.
\end{cases}
\]
Now for any $0<\beta\leq K$, we apply {\cite[Lemma 4.1]{CM}} to see that 
\[
\frac{d}{dt}\int \phi^2 (|\Rm|_{g(t)}^2+\beta)^{\frac{n}{4}}\,dV_{g(t)}\leq C\int \phi^2 (|\Rm|_{g(t)}^2+\beta)^{\frac{n}{4}+\frac{1}{2}}\,dV_{g(t)}+C\int |\nabla^{g(t)} \phi|^2_{g(t)} (|\Rm|_{g(t)}^2+\beta)^{\frac{n}{4}}\,dV_{g(t)}\leq C.
\]
Integrating in time then sending $\beta\to 0$, we can apply our properties of $\phi$ to obtain
\ba
\int_{B_{g(t)}(x_0,  r_2)}|\Rm|_{g(t)}^{\frac{n}{2}}\,dV_{g(t)}&\leq \int \phi^2|\Rm|_{g(t)}^{\frac{n}{2}}\,dV_{g(t)}\leq \int \phi^2|\Rm|_{g(0)}^{\frac{n}{2}}\,dV_{g(0)}+Ct\\&\leq \int_{B_{g(0)}(x_0, r_1)} |\Rm|_{g(0)}^{\frac{n}{2}}\,dV_{g(0)}+Ct. 
\end{align*}
The result follows by restricting $\eta\leq\ep/ C$. 
\end{proof}

\vspace{5pt}

\begin{thm}\label{shorttimeexistence}
For all $n\geq 3$, $\lambda\geq 1$, $v_0>0$, there exists $C(n)\geq 1$, $c(n)>0$, $\iota(n,\lambda, v_0)$, $\delta(n, \lambda)>0$, $\Lambda(n, \lambda )> 1$, and $\widetilde T(n, \lambda, v_0)>0$ such that the following holds. Suppose that $(M^n, g(t))$, $t\in [0,T]$ is a complete bounded curvature Ricci flow and the initial metric $g(0)$ satisfies the following for all $x\in M$: 
\begin{enumerate}[(a)]
\item$\Rc_{g(0)}(x)\geq -1$;\label{shorttimepropscalar}
\item $\Vol_{g(0)} B_{g(0)}(x,2r)\leq 2^{2n+1} \cdot \Vol_{g(0)} B_{g(0)}(x,r)\leq 2^{2n+2} \omega_n r^n$ for all $r\leq 4$; and \label{shorttimepropdoubling}
\item $\int_{B_{g(0)}(x,4)}|\Rm|_{g(0)}^{\frac{n}{2}}\,dV_{g(0)}\leq \delta(n,\lambda) v_0$.\label{shorttimepropconcentration}
\end{enumerate}
Moreover for each $x\in M$ and $r\in (0, 1]$, assume that 
\[
v_{xr}:=\frac{\Vol_{g(0)} B_{g(0)}(x,r)}{r^n}\geq v_0
\]
and that
\be\label{shorttimepropnu}
\Sob(B_{g(0)}(x,4r),g(0))\leq \left[\lambda v_{xr}^{-\frac{2}{n}},0\right] .
\ee
Then the flow $g(t)$ satisfies each of the following estimates uniformly for all $(x,t)\in M\times [0,\widetilde T\land T]$:
\begin{enumerate}[(i)]
\item $\sR_{g(t)}(x)\geq  -n$;  \label{shorttimethmconc4}
\item $|\Rm|_{g(t)}(x)\leq  \frac{1}{t}$;\label{shorttimethmconc1}
\item $\inj_{g(t)}(x) \geq \iota(n, \lambda, v_0) \sqrt{t}$; \label{shorttimethmconc2}
\item $\Vol_{g(t)}B_{g(t)}(x, r)\geq c(n) \lambda^{-\frac{n}{2}} v_0\, r^n$ for all $r\in (0,1]$;\label{shorttimethmconc3}
\item $\int_{B_{g(t)}(x,1)}|\Rm|_{g(t)}^{\frac{n}{2}}\,dV_{g(t)}\leq \Lambda(n,\lambda)\delta(n,\lambda) v_0$; and  \label{shorttimethmconc5}
\item $\Sob(B_{g(t)}(x,r),g(t))\leq \left[C(n)\lambda v_{xr}^{-\frac{2}{n}},1\right]$ for all $r\in (0,1]$. \label{shorttimethmconc6}
\end{enumerate}
In particular, the flow exists up until time $\widetilde T$.
\end{thm}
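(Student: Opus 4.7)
I would run a continuity/bootstrap argument in the spirit of \cite{CCL, CHL}. Fix dimensional constants $\Lambda(n,\lambda), C(n)\geq 1$ to be chosen, and let $T_*$ be the supremum of $\tau\in[0,T]$ on which the \emph{weakened} estimates
\begin{equation*}
|\Rm|_{g(t)}\le \tfrac{2}{t},\qquad \sup_{x}\int_{B_{g(t)}(x,1)}|\Rm|_{g(t)}^{n/2}\,dV_{g(t)}\le 2\Lambda\delta v_0,\qquad \Sob(B_{g(t)}(x,r),g(t))\le\bigl[2C\lambda v_{xr}^{-2/n},\,1\bigr]
\end{equation*}
hold for all $t\in[0,\tau]$, $x\in M$, $r\in(0,1]$. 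The bounded curvature of $g(0)$ combined with assumption (c) and Lemma \ref{uniformincreasingconcentrationlem} ensures $T_*>0$. I would then choose $\widetilde{T}(n,\lambda,v_0)>0$ explicitly so that, on $[0, T_*\wedge \widetilde{T}]$, the sharper conclusions (i)--(vi) in fact hold; by continuity this would contradict the definition of $T_*$ unless $T_*\ge \widetilde{T}\wedge T$.

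\textbf{Sobolev transfer along the flow.} The initial hypothesis \eqref{shorttimepropnu} together with (c) and Lemma \ref{nufromSob} yields a lower bound
\begin{equation*}
\nu\bigl(B_{g(0)}(x,4r),\,g(0),\,\tau_0\bigr)\;\ge\;-\tfrac{n}{2}\log\bigl(C_1\lambda v_{xr}^{-2/n}\bigr),
\end{equation*}
for an appropriately chosen $\tau_0>0$. The Shrinking Balls Lemma \ref{SBL} gives $B_{g(t)}(x,r)\subset B_{g(0)}(x,2r)\subset B_{g(0)}(x,4r)$ for $t\le \widetilde{T}$ and $r\in(0,1]$ once $\widetilde{T}$ is chosen appropriately small. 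Perelman--Wang monotonicity of the localized $\nu$-functional on the fixed domain $B_{g(0)}(x,4r)$ along the Ricci flow then transfers this bound to $\nu(B_{g(0)}(x,4r), g(t), \tau_0-t)$, and restricting test functions to the smaller time-$t$ ball together with Lemma \ref{Sobfromnu} returns the Sobolev conclusion (vi) with the refined constant $C(n)\lambda v_{xr}^{-2/n}$, which is strictly better than the bootstrap threshold.

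\textbf{Concentration preservation and $1/t$ decay.} For (v), apply Lemma \ref{curvconcevolutionlem} with a cutoff $\phi$ supported in $B_{g(0)}(x,3)$ and equal to $1$ on $B_{g(0)}(x,2)$, chosen so that $\square\phi\le 0$ (as in \cite{ST}). The dangerous term in Lemma \ref{curvconcevolutionlem} is $\int \phi^2|\Rm|^{n/2+1/2}\,dV$, which H\"older plus the Sobolev inequality just established bounds by
\begin{equation*}
\Bigl(\int_{\mathrm{supp}\,\phi}|\Rm|^{n/2}\,dV\Bigr)^{2/n}\cdot 2C\lambda v_0^{-2/n}\int\Bigl(|\nabla(\phi|\Rm|^{n/4})|^2+|\nabla\phi|^2|\Rm|^{n/2}\Bigr)\,dV.
\end{equation*}
The crucial cancellation $(2\Lambda\delta v_0)^{2/n}\cdot 2C\lambda v_0^{-2/n}$ is \emph{independent of $v_0$} and can be made $\le 1/6$ by choosing $\delta(n,\lambda)$ small enough; this lets the gradient piece be absorbed into the good $-\tfrac{1}{3}\int|\nabla(\phi|\Rm|^{n/4})|^2$ term on the right-hand side of Lemma \ref{curvconcevolutionlem}. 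A Gr\"onwall estimate on the resulting differential inequality then yields an improved concentration bound, strictly below the bootstrap threshold $2\Lambda\delta v_0$. Conclusion (ii) follows by a Moser iteration on the evolution inequality for $|\Rm|$ (cf.\ \cite{CCL, CHL}), using the Sobolev inequality to promote the small $L^{n/2}$ concentration to the pointwise bound $|\Rm|_{g(t)}\le 1/t$, again strictly better than the bootstrap.

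\textbf{Remaining estimates and principal obstacle.} Conclusion (i) is immediate from the maximum-principle comparison $\partial_t\sR\ge \Delta\sR + \tfrac{2}{n}\sR^2$ starting from $\sR_{g(0)}\ge -n$ (which itself follows from (a)); conclusion (iv) follows from (vi) via Lemma \ref{volfromSob}; and (iii) follows from Cheeger's inequality \cite{CGT} applied with (ii) and (iv). The main obstacle is that (ii), (v), and (vi) are mutually interdependent --- Sobolev preservation uses smallness of the concentration, the concentration evolution needs the Sobolev inequality, and the Moser iteration for the pointwise bound needs both --- so the constants $\Lambda(n,\lambda)$, $\delta(n,\lambda)$, and $\widetilde{T}(n,\lambda,v_0)$ must be chosen consistently with all three improvements happening simultaneously. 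This is precisely why the linear dependence $\delta(n,\lambda)v_0$ in hypothesis (c) is needed: the cancellation $v_0^{2/n}\cdot v_0^{-2/n}=1$ in the H\"older/Sobolev step makes the absorption $v_0$-independent, and without this scaling the bootstrap would fail at small $v_0$ (consistent with the discussion following Question \ref{question1}).
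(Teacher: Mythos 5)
Your architecture matches the paper's in its two main pillars: conclusion (vi) is obtained by converting the initial Sobolev inequality to a local $\nu$-functional bound (Lemma \ref{nufromSob}), transporting it along the flow by the almost-monotonicity of the localized entropy, and converting back (Lemma \ref{Sobfromnu}); and conclusion (v) is obtained from Lemma \ref{curvconcevolutionlem} by absorbing the $|\Rm|^{\frac{n}{2}+1}$ term via H\"older and the Sobolev inequality, using exactly the cancellation $(\Lambda\delta v_0)^{2/n}\cdot\lambda v_0^{-2/n}$ that you identify. The one place you genuinely diverge is conclusion (ii): you propose Moser iteration (the route of \cite{CCL, CHL}), whereas the paper deliberately replaces it with a limiting contradiction argument in the existence time, rescaling a putative first failure of the $\alpha/t$ bound and invoking Wang's pseudolocality \cite{WangB} after showing the rescaled initial data are almost Euclidean. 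The paper's route avoids tracking constants through the iteration at the cost of making $T_1$ (hence $\widetilde T$) non-explicit; your route, if the constants are tracked, would give an explicit $\widetilde T$, and the paper itself concedes it is viable. So this is a legitimate alternative rather than an error.

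Two concrete gaps remain. First, your bootstrap threshold $|\Rm|_{g(t)}\le 2/t$ is too weak to drive the Sobolev-transfer step: Cheng's almost-monotonicity of $\nu(B_{g(t)}(y,r),g(t),\cdot)$ requires a curvature bound of the form $|\Rm|\le \alpha/t$ with $\alpha$ comparable to $r^2$ (the paper uses $\alpha=3r_0/10$ with $r_0=r_0(n,\lambda)$ small, coming out of the Expanding Balls/covering step), so the quantity you propagate must be $\alpha(n,\lambda)/t$ for a specific small $\alpha$, not $2/t$; the final statement $|\Rm|\le 1/t$ is then weaker than what the bootstrap actually closes on. Second, your cutoff argument for (v) as described does not reach the stated conclusion: a Perelman-type cutoff with $\square\phi\le 0$ is built from $d_{g(t)}(\cdot,x_0)+C\sqrt{t}$ and is identically $1$ only on a \emph{small} time-$t$ ball (radius $1/4$ in the paper), not on $B_{g(0)}(x,2)$, so the Gr\"onwall step only controls $\int_{B_{g(t)}(x_0,1/4)}|\Rm|^{n/2}$. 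Upgrading this to unit time-$t$ balls requires covering $B_{g(t)}(y,3/2)$ by boundedly many such small balls, and bounding the multiplicity $N$ is where the volume doubling hypothesis, the Expanding Balls Lemma \ref{EBL}, and the actual definition $\Lambda(n,\lambda)=8N$ enter; without this step the constant $\Lambda$ in (v) is not pinned down and the bootstrap inequality is not closed.
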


\vspace{5pt}

\begin{remark}\label{scalarreplacericci}
The lower Ricci bound assumption \eqref{shorttimepropscalar} can be weakened to a lower bound on the scalar curvature $\sR_{g(0)}\geq -1$ in the precense of the volume doubling assumption \eqref{shorttimepropdoubling}, at the expense of possibly shrinking $\delta(n,\lambda)$. We choose to state the theorem like this because it is more natural in our setting, and because this stronger hypothesis allows us to substitute a limiting contradiction argument for the Moser Iteration argument (see Step 2 in the proof).
\end{remark}

\vspace{5pt}

\begin{proof}
Note that since lower scalar curvature bounds are preserved by complete bounded curvature flows, conclusion \eqref{shorttimethmconc4} is trivially satisfied (we have included it in the theorem statement for clarity later). We begin the proof of the other properties by fixing our constants. First, we set
\be\label{boundedcurvdefnofT}
\widetilde T :=\min\left\{T_1,C_4\inv, \frac{1}{16}\beta^{-2},\frac{1}{4}C_3\inv, \frac{\log 2}{10}, \frac{r_0^4}{10^4}, r_0^2 \widehat T\right\}.
\ee
Here $T_1=T_1(n,\lambda, v_0)>0$ is some positive time coming from Step 2 of the proof, $\beta(n)$ is from the Shrinking Balls Lemma \ref{SBL}, $\widehat T=\widehat T(n,1,1)$ is from the Expanding Balls Lemma \ref{EBL}, and $c_1(n)$, $C_3(n)$, $C_4(n,\lambda)$, $r_0(n,\lambda)$ are constants that will be defined within the proof. Also, we define
\be\label{smallnessdeltaassumptionfixed}
\delta(n,\lambda)= \Lambda(n,\lambda)\inv \left(\frac{1}{10^7 n^3 \lambda}\right)^{\frac{n}{2}},
\ee
where $\Lambda(n,\lambda)$ is defined in \eqref{boundonLambda}. We will see in Step 1 of the proof that we may take 
\[
C(n)= 1000n \;\;\text{ and } \;\; c(n) = (2^{n+4}\cdot 1000n)^{-\frac{n}{2}}.
\]
Finally, recall that condition \eqref{shorttimethmconc2} follows from the conclusions \eqref{shorttimethmconc1} and \eqref{shorttimethmconc3} by \cite{CGT}, so we will assume that the $\iota(n,\lambda, v_0)>0$ is the constant given there.\\

Now we may proceed with the proof. Let $T'$ be the maximal time in the interval $[0, T\land \widetilde T]$ such that for all $(x,t)\in M\times [0,T']$, there holds 
\be\label{locsmallconc}
\int_{B_{g(t)}(x,1)}|\Rm|_{g(t)}^{\frac{n}{2}}\,dV_{g(t)}\leq \Lambda(n,\lambda)\delta(n,\lambda) v_0.
\ee
Note that because $(M, g(t))$, $t\in [0,T]$ is a bounded curvature flow which satisfies \eqref{shorttimepropconcentration}, Lemma \ref{uniformincreasingconcentrationlem} guarantees that $T'>0$. In advance however, it appears as though $T'$ depends on the curvature bound of the flow $g(t)$, but we will show that this is not the case.  \\

Our proof is broken down into three steps: (1) Show we have a uniform local Sobolev inequality (i.e., conclusion \eqref{shorttimethmconc6}) along the flow on the interval $[0,T']$ assuming the time interval is possibly shortened so that a global $\alpha/t$ curvature bound holds, which in particular implies conclusion \eqref{shorttimethmconc3}; (2) Show that the assumption of the global $\alpha/t$ curvature bound assumption in Step 1 is implied as long as $T'\leq \widetilde T$ (this gives conclusions \eqref{shorttimethmconc1} and \eqref{shorttimethmconc2}); (3) Derive a contradiction in the event that $T'< T\land \widetilde{T}$. \\

\textbf{Step 1 (Sobolev inequality)}\\

Fix some $r_0(n,\lambda)\in (0,1)$ to be defined later (in hindsight, we could take $r_0=[2^{3n+6}(2^{n+4}1000n \lambda )^{\frac{n}{2}}]\inv$). Since $(M,g(t))$, $t\in [0,T]$ is a bounded curvature flow, we may assume that there is some $T_1>0$ such that 
\be\label{boundedcurvstep1curvbound}
|\Rm|_{g(t)}\leq \frac{3r_0/10}{t} \;\; \text{ for all } t\in (0,T_1].
\ee
In this step of the proof, we will restrict to the interval $[0,T_1\land T']$. We will show in Step 2 that it is necessarily true that $T_1$ can be taken to depend only on the variables $n,\lambda, v_0$. \\ 

Fix some $x\in M$ and $r\in [r_0, 1]$. By our smallness condition on $\delta(n,\lambda)$ in \eqref{smallnessdeltaassumptionfixed} and our assumptions \eqref{shorttimepropnu} and \eqref{shorttimepropconcentration}, as well as the fact that $v_{x,r}\geq v_0$, we may apply Lemma \ref{nufromSob} (with $a=0$) to obtain
\[
\nu(B_{g(0)}(x,4 r), g(0),2)\geq -\frac{n}{2}\log(C_1(n)\lambda v_{x,r}^{-\frac{2}{n}}).
\]
Now by our curvature assumption \eqref{boundedcurvstep1curvbound}, we apply the ``almost-monotonicity" of this local functional along the Ricci flow {\cite[Lemma 2.5]{Cheng}} (with the variables there being $A=\frac{3r}{10}, B=\frac{1}{4}$, $D=0$, $\tau_1=1$) to see that for each $y\in B_{g(0)}(x, r)$ and $t\in [0, T_1\land T']$ there holds 
\ba
\nu(B_{g(t)}(y, r), g(t), 1)&\geq \nu(B_{g(0)}(y, 3r), g(0), 2)-\left(\frac{2}{10(3r/10)^2 (1/4)^2}+e\inv\right)(e^{\frac{T'}{10(3r/10)^2 (1/4)^2}}-1)\\&
\geq \nu(B_{g(0)}(x, 4r), g(0), 2)-\frac{n}{2}\log 2
\geq -\frac{n}{2}\log(2C_1(n)\lambda v_{x,r}^{-\frac{2}{n}}).
\end{align*}
Note that in the second inequality, we have used the fact that $T'\leq \widetilde T\leq \frac{r_0^4}{10^4}$ so that 
\[
\left(\frac{2}{10(3r/10)^2 (1/4)^2}+e\inv\right)(e^{\frac{T'}{10(3r/10)^2 (1/4)^2}}-1)\leq \frac{100}{r^2}(e^{\frac{50  T'}{r^2}}-1)\leq 10^4 r_0^{-4} \widetilde T \leq 1< \frac{n}{2}\log 2.
\]
Since our smallness condition on $\delta$ in \eqref{smallnessdeltaassumptionfixed} in particular implies that 
\[
\delta(n,\lambda)\leq \Lambda(n,\lambda)\inv\left(\frac{1}{250 n\lambda}\right)^{\frac{n}{2}},
\]
then by Lemma \ref{Sobfromnu} and Remark \ref{productofconstantsbound}, we obtain (with $C_2(n)=1000n$) the uniform Sobolev inequality 
\be\label{shorttimesobineq}
\Sob(B_{g(t)}(y,r), g(t))\leq [C_2(n)\lambda v_{x,r}^{-\frac{2}{n}}, 1]
\ee
whenever $y\in B_{g(0)}(x,r)$, which gives conclusion \eqref{shorttimethmconc6}. Note that by Lemma \ref{volfromSob}, we can also say that 
\be\label{shorttimevolumestep1}
\Vol_{g(t)} B_{g(t)}(y,\rho) \geq c_1(n)\lambda^{-\frac{n}{2}} v_{x,r} \rho^n \;\;\text{ whenever } \;  y\in B_{g(0)}(x,r), \; \rho\in (0,r], \; t\in [0, T_1\land T'].
\ee
This gives conclusion \eqref{shorttimethmconc3} since $v_{x,r}\geq v_0$. Furthermore, note that $c_1(n)\geq (2^{n+4}\cdot C_2(n))^{-\frac{n}{2}}$. We also point out that although the work in this step was valid for any $r\in [r_0,1]$, we will only need to apply it with $r=r_0$ and $r=1$. This volume ratio estimate, together with the assumed curvature bound on $(0,T_1\land T']$ imply (by \cite{CGT}) the injectivity radius bound 
\be\label{shorttimeinjectivitystep1}
\inj_{g(t)}(x)\geq \iota(n,\lambda, v_0)\sqrt{t} \; \text{ for all } x\in M, \;t\in (0,T_1\land T'].
\ee
This gives conclusion \eqref{shorttimethmconc2}. \\

\textbf{Step 2 (Curvature bound)}\\

In this step of the proof, we will show that the flow $g(t)$ satisfies the curvature bound \eqref{boundedcurvstep1curvbound} as long as \eqref{locsmallconc} holds on the time interval $[0,T']$, assuming that $T'\leq T_1$ where $T_1$ depends only on $n,\lambda, v_0$. Just like the proof of {\cite[Proposition 2.1]{CCL}}, we could do this by applying Moser's iteration, which is well-known to experts but is a somewhat technical argument, especially in our setting where we must prove that the smallness of $\int |\Rm|^{\frac{n}{2}}$ need only depend linearly on $v$. Instead, we will argue by contradiction by employing a limiting contradiction argument with respect to the existence time. Thus we may assume that for some $n\geq 3$, $\lambda\geq 1$, $v_0>0$, there exists a sequence of times $T_i\to 0$ and associated bounded curvature Ricci flows $(M_i^n, g_i(t))$, $t\in [0, T_i]$ each satisfying the hypotheses of the theorem and such that 
\[
\int_{B_{g_i(t)}(x,1)}|\Rm|_{g_i(t)}^{\frac{n}{2}}\,dV_{g_i(t)} \leq \Lambda \delta v_0
\]
for all $(x,t)\in M_i\times [0,T_i]$, but which do not enjoy a $\alpha/t$ curvature bound on the interval $[0,T_i]$, where $\alpha=3r_0/10$. Since each $(M_i, g_i(t))$, $t\in [0, T_i]$ is assumed to be a bounded curvature flow, we may assume that there is some \emph{first time} $t_i\in [0,T_i]$ which fails our desired bound. In other words we assume that 
\[
|\Rm|_{g_i(t_i)}(x_i)= \frac{\alpha}{t_i}
\]
for some $x_i\in M_i$ and that 
\[
|\Rm|_{g_i(t)}\leq \frac{\alpha}{t} \; \text{ on } M_i\times (0, t_i]. 
\]
Moreover by Step 1, we know that the Sobolev inequality \eqref{shorttimesobineq} and volume ratio bound \eqref{shorttimevolumestep1} hold with $r=1$, $\rho=\sqrt{t}$, $v_{x,1}=v_0$. Recall this in turn gave the injectivity radius estimate \eqref{shorttimeinjectivitystep1} for each $g_i(t)$ on $(0,t_i]$.\\

 Now parabolically rescale the $g_i(t)$ so that $t_i\nearrow  1$, which is to say we define the sequence of complete flows $\tilde g_i(t)=\frac{1}{t_i}g_i(t_i t)$, each defined on $[0,1]$, which satisfy
\be\label{boundedcurvstep2props}
\begin{cases}
& \Rc_{\tilde g_i(0)}\geq -t_i,\; \; \sR_{\tilde g_i(t)}\geq -nt_i ,\\
&|\Rm|_{\tilde g_i(1)}(x_i)=\alpha,\\
&|\Rm|_{\tilde g_i(t)}\leq \frac{\alpha}{t} \; \text{ for all } t\in (0,1], \\
&\int_{\Omega_i}|\Rm|_{\tilde g_i(1)}^{\frac{n}{2}}\,dV_{\tilde g_i(1)} \leq \Lambda \delta v_0,\\ 
&\inj_{\tilde g_i(1)}(x_i)\geq \iota(n,\lambda, v_0), \text{ and }\\
&\Sob(\Omega_i, \tilde g(1))\leq [C_2(n)\lambda v_0^{-\frac{2}{n}}, t_i].
\end{cases}
\ee
Here we have denoted $\Omega_i=B_{\tilde g(1)}(x,t_i^{-\frac{1}{2}})$. Therefore we may apply Hamilton's Compactness Theorem \cite{Hamilton} and a diagonal argument to say that $(M_i, \tilde g_i(t), x_i)$ subconverges (in the pointed Cheeger-Gromov sense) to some complete Ricci flow $(N,g_\infty(t), x_\infty)$, $t\in (0,1]$, which by virtue of the properties \eqref{boundedcurvstep2props}, must satisfy
\[
\begin{cases}
&\sR_{g_\infty (t)}\geq 0 \;\text{ for all } t\in (0,1],\\
&|\Rm|_{g_\infty(1)}(x_\infty)=\alpha,\\
& \sup_{x\in N}|\Rm|_{g_\infty(t)}(x) \leq \frac{\alpha}{t}\;\text{ for all } t\in (0,1],\\
&\int_N|\Rm|_{g_\infty(1)}^{\frac{n}{2}}\,dV_{g_\infty(1)} \leq \Lambda \delta v_0, \;\text{ and }\\ 
&\Sob(N, g_\infty(1))\leq [C_2(n)\lambda v_0^{-\frac{2}{n}}, 0].
\end{cases}
\]
Because $C_2(n)=1000n$, our requirement on $\delta(n,\lambda)$ in \eqref{smallnessdeltaassumptionfixed} implies that
\be\label{smallerthanCMresult}
\delta(n,\lambda)\leq \Lambda(n,\lambda)\inv\left(\frac{\delta(n)}{C_2(n)\lambda}\right)^{\frac{n}{2}} 
\ee
where $\delta(n)\geq \frac{1}{10^4 n^2}$ is the dimensional constant from {\cite[Theorem 1.2]{CM}}. According to \cite{CM}, we may therefore extend the flow $g_\infty(t)$ to the interval $(0,\infty)$. Now by an identical blow-down argument as in the proof of Theorem \ref{main} from section \ref{proofOfMain}, along with the faster than $1/t$ curvature decay, we conclude that for each $t\in (0,1]$, there holds
\be\label{avralmostnonnegativericci}
\lim_{r\to \infty}\frac{\Vol_{g_\infty(t)}B_{g_\infty(t)}(x_\infty, r)}{r^n}\geq \omega_n.
\ee
Now we have the following claim. 
\begin{claim}
For any $A\geq 1$, there exists $N_A\in \N$ such that whenever $i\geq N_A$, there holds 
\[
\inf_{0<t\leq 1}\mu(B_{\tilde g_i(0)}(x_i, 20A\sqrt{t}), \tilde g_i(0), t)\geq -A^{-2}.
\]
\end{claim}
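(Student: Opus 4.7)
The plan is to use the scale invariance of Perelman's $\mu$-functional to reduce the claim to a fixed unit scale, then derive a contradiction by Cheeger-Colding compactness in the limit $i\to\infty$. Set $h_i^{(t)} := t^{-1}\tilde g_i(0)$. By the scale invariance $\mu(\Omega, cg, c\tau) = \mu(\Omega, g, \tau)$,
\[
\mu\bigl(B_{\tilde g_i(0)}(x_i, 20A\sqrt{t}), \tilde g_i(0), t\bigr) = \mu\bigl(B_{h_i^{(t)}}(x_i, 20A), h_i^{(t)}, 1\bigr),
\]
so the claim reduces to showing $\mu(B_{h_i^{(t)}}(x_i, 20A), h_i^{(t)}, 1) \geq -A^{-2}$ uniformly for $t\in(0,1]$ and all large $i$. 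From \eqref{boundedcurvstep2props}, the rescaled metrics $h_i^{(t)}$ satisfy, uniformly in $t\in(0,1]$: a lower Ricci bound $\Rc_{h_i^{(t)}}\geq -tt_i\geq -t_i$ (tending to $0$); volume doubling with lower volume ratio $v_0$ on balls of admissible radius $\leq 1/\sqrt{tt_i}$ (which grows unboundedly with $i$); the Sobolev inequality $\Sob(B(x,4r),h_i^{(t)})\leq [\lambda v_0^{-2/n},0]$ on the same range; and the scale-invariant bound $\int_{B(x,1)}|\Rm|_{h_i^{(t)}}^{n/2}\,dV_{h_i^{(t)}}\leq \Lambda\delta v_0$.

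Argue by contradiction: suppose there exist $A\geq 1$ and subsequences $i_k\to\infty$, $s_k\in(0,1]$ with $\mu(B_{H_k}(x_{i_k}, 20A), H_k, 1) < -A^{-2}$, where $H_k := h_{i_k}^{(s_k)}$. By Cheeger-Colding pointed measured Gromov-Hausdorff compactness (using almost-nonnegative Ricci, volume doubling, and non-collapsing), extract a subsequential limit $(Y, d_Y, \mathfrak{m}, y_\infty)$, an RCD$(0,n)$ space that inherits the Sobolev inequality with constant $\lambda v_0^{-2/n}$, the lower volume ratio $v_0$, and the scale-invariant curvature concentration smallness. A lower semicontinuity of the local $\mu$-functional under pointed mGH convergence yields $\mu(B_Y(y_\infty, 20A), Y, 1)\leq -A^{-2}$. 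On the other hand, the threshold \eqref{smallnessdeltaassumptionfixed} on $\delta(n,\lambda)$ was chosen precisely so that, after smoothing $Y$ by a short-time Ricci flow via \cite[Theorem 4.1]{CHL} (which preserves the Sobolev inequality and concentration bound up to dimensional constants), the hypotheses of \cite[Theorem 1.2]{CM} are satisfied; then the blow-down argument that produced \eqref{avralmostnonnegativericci} applies on the limit to give $\AVR(Y)=\omega_n$, so Bishop-Gromov rigidity forces $Y\cong(\R^n, g_E)$. Since $\mu\equiv 0$ on Euclidean space, this contradicts $\mu(B_Y(y_\infty, 20A), Y, 1)\leq -A^{-2} < 0$.

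The main obstacle is the combination of (a) the lower semicontinuity of the local $\mu$-functional under pointed mGH convergence, which is delicate because the defining infimum ranges over $u\in W^{1,2}_0(\Omega)$ with the strict compact support condition $\supp\,u\subset\subset\Omega$ that must be tracked through the convergence (via approximate lifts of near-minimizers on $Y$ back to $H_k$), and (b) the identification of the RCD$(0,n)$ limit $Y$ with $(\R^n, g_E)$ without circular recourse to the paper's main theorem. A cleaner alternative that bypasses synthetic RCD theory is to apply, for each large $k$, a short-time Ricci-DeTurck or heat regularization directly to $H_k$, obtaining a smooth bounded-curvature metric $\widetilde H_k$ whose Sobolev constant and curvature concentration differ from those of $H_k$ by at most a dimensional factor, and then apply \cite[Theorem 1.2]{CM} together with the Euclidean-identification argument already deployed in the proof of Theorem~\ref{main} to $\widetilde H_k$ directly. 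This approach is well-suited to the short-time bounded-curvature machinery being developed in Theorem~\ref{shorttimeexistence} and Lemma~\ref{uniformincreasingconcentrationlem}.
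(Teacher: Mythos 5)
Your opening reduction by scale invariance of the $\mu$-functional matches the paper's first step, but the core of your argument diverges from the paper's and contains two genuine gaps. First, the identification of the mGH limit $Y$ with $(\R^n,g_E)$ is essentially circular: $Y$ is a limit of the \emph{time-zero} metrics $s_k^{-1}\tilde g_{i_k}(0)$, which carry no uniform curvature bounds, so $Y$ is a priori a singular metric-measure space; \cite[Theorem 4.1]{CHL} produces a Ricci flow only from a smooth manifold, so you cannot ``smooth $Y$ by a short-time Ricci flow'' with the cited machinery, and concluding $Y\cong\R^n$ from nonnegative Ricci, a lower volume ratio bound $v_0$, and small curvature concentration is precisely the gap theorem whose proof this claim is embedded in. Second, the semicontinuity of the local $\mu$-functional under pointed mGH convergence --- needed to pass the strict inequality $\mu<-A^{-2}$ to the limit --- is asserted but not established; tracking compactly supported near-minimizers through a convergence of possibly singular spaces is a substantial result not available in the references the paper uses. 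Your ``cleaner alternative'' does not repair this: regularizing $H_k$ and applying \cite[Theorem 1.2]{CM} yields information about the flow \emph{forward} in time, whereas the claim concerns $\mu(\cdot,\tilde g_i(0),t)$, a functional of the initial metric, and the monotonicity of the local entropy runs in the wrong direction to recover time-zero bounds from later times.

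The paper's actual route avoids all of this. It invokes \cite[Lemma 4.10]{WangB}, which converts two pointwise hypotheses --- $\Rc_{\tilde g_i(0)}\geq-\xi$ everywhere and the almost-Euclidean volume ratio $\Vol_{\tilde g_i(0)}B_{\tilde g_i(0)}(x_i,r)\geq(1-\xi)\omega_n r^n$ for all $r\in(0,\xi]$ --- directly into the bound $\mu(B(x_i,20A),\cdot,1)\geq-A^{-2}$; uniformity in $t$ is then free because both hypotheses are preserved under the rescalings $t^{-1}\tilde g_i(0)$ with $t\leq1$. The Ricci condition is immediate since $\Rc_{\tilde g_i(0)}\geq-t_i\to0$. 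The volume condition is obtained by moving to a fixed positive time $s$, where the $\alpha/t$ curvature and injectivity bounds give \emph{smooth} convergence $\tilde g_i(s)\to g_\infty(s)$ and where \eqref{avralmostnonnegativericci} supplies an almost-Euclidean volume ratio at one large radius $R_0$; this is transferred back to $\tilde g_i(0)$ via the Shrinking Balls Lemma and the volume-form evolution under the scalar curvature lower bound, and then Bishop--Gromov monotonicity (valid for $\tilde g_i(0)$ by its almost-nonnegative Ricci curvature) propagates the almost-Euclidean ratio from the single large scale down to every smaller scale, in particular all $r\leq\xi$. That downward propagation by Bishop--Gromov is the mechanism your proposal has no substitute for.
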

This claim will be sufficient to derive a contradiction because we may take $A=A(\alpha/2, n)$ from {\cite[Theorem 4.5]{WangB}}, which, in combination with \eqref{boundedcurvstep2props}, would yield the absurd inequality 
\[
\alpha=|\Rm|_{\tilde g_i(1)}(x_i)\leq \frac{\alpha}{2}.
\]
We will now prove the claim. 
\begin{proof}[Proof of Claim]
For any given $A\geq 1$ define $\eta=A^{-2}$ and let $\xi(\eta,n,A)>0$ be the constant from {\cite[Lemma 4.10]{WangB}}. It will suffice to show that for all $i$ sufficiently large, we in fact have
\be\label{almostEuclideanconditionRc}
\begin{cases}
&\Rc_{\tilde g_i(0)}(x)\geq -\xi \;\text{ for all } x\in M; \\
&\Vol_{\tilde g_i(0)}B_{\tilde g_i(0)}(x_i, r)\geq (1-\xi) \omega_n r^n\;\text{ for all } r\in (0,\xi].
\end{cases}
\ee
Indeed, for any $t\in (0,1]$, the metric $\frac{1}{t}\tilde g_i(0)$ will then also satisfy these conditions which implies 
\[
-A^{-2}\leq \mu\left(B_{\frac{1}{t}\tilde g_i(0)} (x_i, 20A), \frac{1}{t}\tilde g_i(0), 1\right)=\mu\left(B_{\tilde g_i(0)} (x_i, 20A\sqrt t), \tilde g_i(0),t\right)
\]
where we have applied {\cite[Lemma 4.10]{WangB}} in the first inequality while the second follows from the scaling properties of the $\mu$-functional.\\

First let $i$ be large enough to ensure that $t_i\leq \xi$, which shows the the lower Ricci bound condition. For the volume condition, we begin by defining $\hat\xi>0$ by $(1-\hat \xi)^4=(1-\xi)$. Fix $s\in (0,1]$ small enough so that 
\[
R^n \geq (1-\hat \xi) (R+\beta \sqrt{\alpha s})^n \;\text{ for all } R\geq \xi\inv,
\]
where $\beta=\beta(n)$ from Lemma \ref{SBL}. Now by the convergence $\tilde g_i(s)\to g_\infty(s)$ and the asymptotic volume estimate for $g_\infty(s)$ as in \eqref{avralmostnonnegativericci}, we can fix some $R_0\geq \xi\inv$ such that 
\[
\Vol_{\tilde g_i(s)}B_{\tilde g_i(s)}(x_i, R_0)\geq (1-\hat \xi) \omega_nR_0^n.
\]
We also assume that $i$ is large enough so that $e^{n s t_i}\leq 1+\hat \xi$. Finally by Bishop-Gromov Volume Comparison, we can say that for all $i$ large enough and $r\in (0,R_0+\beta\sqrt{\alpha s}]$, there holds
\[
\frac{\Vol_{\tilde g_i(0)}B_{\tilde g_i(0)}(x_i, r)}{r^n}\geq (1-\hat\xi)\frac{\Vol_{\tilde g_i(0)}B_{\tilde g_i(0)}(x_i, R_0+\beta\sqrt{\alpha s})}{(R_0+\beta\sqrt{\alpha s})^n}.
\]
Then for any $r\in (0,\xi\inv]$, these conditions on the largeness of $i$ imply
\ba
r^{-n}\Vol_{\tilde g_i(0)}B_{\tilde g_i(0)}(x_i, r)& \geq(1-\hat \xi)(R_0+\beta\sqrt{\alpha s})^{-n}\Vol_{\tilde g_i(0)}B_{\tilde g_i(0)}(x_0, R_0+\beta\sqrt{\alpha s})\\&\geq 
(1-\hat \xi)^2 (R_0+\beta\sqrt{\alpha s})^{-n}\Vol_{\tilde g_i(s)}B_{\tilde g_i(0)}(x_0, R_0+\beta\sqrt{\alpha s})\\&\geq 
(1-\hat \xi)^3 R_0^{-n}\Vol_{\tilde g_i(s)}B_{\tilde g_i(s)}(x_0, R_0)\\&\geq 
(1-\hat \xi)^4\omega_n =(1-\xi) \omega_n.
\end{align*}
This shows that condition \eqref{almostEuclideanconditionRc} holds and completes the proof of the claim, and thus of Step 2 as well.
\end{proof}

\vspace{5pt}

\textbf{Step 3 (Uniform time)}\\

We have already seen in Steps 1 and 2 that conclusions \eqref{shorttimethmconc4}, \eqref{shorttimethmconc1}, \eqref{shorttimethmconc2}, \eqref{shorttimethmconc3}, and \eqref{shorttimethmconc6} all follow as long as conclusion \eqref{shorttimethmconc5} holds on $[0,T']$ if $T'\leq T\land \widetilde T$. We will assume that $T'< T\land \widetilde T$ then show that this contradicts the maximality of $T'$ which will complete the proof of the theorem.  \\

 Start by fixing some $x_0\in M$ and let $\phi:M\to \R$ be defined as in \cite{CCL}. That is $\phi(x,t)=e^{-10t} \varphi(\eta(x,t))$ where $\eta(x,t)=d_{g(t)}(x,x_0)+C_3(n)\sqrt{t}$ and $\varphi: \R\to \R$ is a smooth cutoff function with $\varphi\equiv 1$ on $(-\infty, \frac{1}{2}]$, $\varphi\equiv 0$ on $[1,\infty)$, $\varphi''\geq -10 \varphi$, $0\geq \varphi'\geq -10\sqrt{\varphi}$. Moreover $C_3(n)$ is chosen large enough so that $\square \phi\leq 0$. With this choice of $\phi$ in Lemma \ref{curvconcevolutionlem}, we have
\[
\begin{split}
\frac{ d}{ dt}\int_M &\phi^2( |\Rm|_{g(t)}^2+\beta)^{\frac{n}{4}}\,dV_{g(t)}\leq -\frac{1}{3}\int_M |\nabla^{g(t)} (\phi(|\Rm|_{g(t)}^2+\beta)^{\frac{n}{8}})|_{g(t)}^2 \,dV_{g(t)}\\&+
5n\int_M \phi^2 (|\Rm|_{g(t)}^2+\beta)^{\frac{n}{4}+\frac{1}{2}}\,dV_{g(t)}+
1000n\int_{\supp \,\phi} (|\Rm|_{g(t)}^2+\beta)^{\frac{n}{4}}\,dV_{g(t)}.
\end{split}
\]
Now to the terms on the RHS, we apply our Sobolev inequality \eqref{shorttimesobineq} (here $r=1$ and recall that $v_{x,1}\geq v_0$) to the first and H\"older's inequality to the second to see that 
\be\label{boundedcurvthme11}
\begin{split}
\frac{ d}{ dt}\int_M \phi^2&( |\Rm|_{g(t)}^2+\beta)^{\frac{n}{4}}\,dV_{g(t)}\leq -\frac{1}{3C_2(n)\lambda v_0^{-\frac{2}{n}}}\left(\int_{B_{g(t)}(x,1)} |\phi (|\Rm|_{g(t)}^2+\beta)^{\frac{n}{8}}|^{\frac{2n}{n-2}}\,dV_{g(t)}\right)^{\frac{n-2}{n}}\\&+
5n\left(\int_{\supp \phi}  (|\Rm|_{g(t)}^2+\beta)^{\frac{n}{4}}\,dV_{g(t)}\right)^{\frac{2}{n}}\left(\int_{B_{g(t)}(x,1)} |\phi (|\Rm|_{g(t)}^2+\beta)^{\frac{n}{8}}|^{\frac{2n}{n-2}}\,dV_{g(t)}\right)^{\frac{n-2}{n}}\\&+
1001n\int_{\supp \phi} (|\Rm|_{g(t)}^2+\beta)^{\frac{n}{4}}\,dV_{g(t)}.
\end{split}
\ee
Then integrate over $[0,t]$ on both sides and send $\beta\to 0$. Because $\supp \,\phi\subset B_{g(t)}(x_0, 1)$, our restriction on $\delta(n,\lambda)$ in \eqref{smallnessdeltaassumptionfixed} ensures that the sum of the two terms containing $\|\phi^2 |\Rm|^{\frac{n}{2}}\|_{L^{\frac{n}{n-2}}}$ in \eqref{boundedcurvthme11} is nonpositive. Therefore we have
\[
\int_M \phi |\Rm|_{g(t)}^{\frac{n}{2}}\,dV_{g(t)}\leq \int_M \phi |\Rm|_{g(0)}^{\frac{n}{2}}\,dV_{g(0)}+1001n\Lambda \delta v_0 t\leq 
\delta v_0+1001n\Lambda \delta v_0 t=:(1+C_4 (n, \lambda) t )\delta v_0.
\]
Now we multiply through by $e^{10t}$ and notice whenever $t\leq T'\leq \widetilde T$, the conditions in \eqref{boundedcurvdefnofT} imply $e^{10t}\leq 2$, $C_4(n,\lambda)t\leq 1$, and  $\phi\equiv 1$ on $B_{g(t)}(x_0, 1/4)$. Then for all such $t\in [0, T']$, there holds
\be\label{boundedcurvsmallinttimet}
\int_{B_{g(t)}(x_0, \frac{1}{4})} |\Rm|^{\frac{n}{2}}\,dV_t\leq 4\delta v_0.
\ee

Now fix some $(y,t)\in M\times (0, T']$ and let $\{x_j\}\subset B_{g(t)}(y,3/2)$ be some subset of points such that $\{B_{g(t)}(x_j, \frac{1}{8})\}_{j=1}^N$ are mutually disjoint, but so that 
\be\label{coveringclaime1}
\bigsqcup_{i=1}^N B_{g(t)}(x_i, 1/8)\subset  B_{g(t)}(y,7/4) \; \; \text{ and } \; \; B_{g(t)}(y, 3/2)\subset \bigcup_{i=1}^N B_{g(t)}(x_i, 1/4).
\ee
Then by the Shrinking Balls Lemma \ref{SBL}, since $T'\leq \widetilde{T}\leq \frac{1}{16\beta^2}$, we have
\be\label{tin0containmentbounded}
B_{g(t)}(y,7/4) \subset B_{g(0)}(y,2).
\ee
We would like to be able to say that we have a similar reverse containment as well. For this, we will be enlisting the help of Lemma \ref{EBL}. Recall we have a $\alpha/t$ curvature bound by Step 2, a lower volume bound \eqref{shorttimevolumestep1} by Step 1, and an assumed lower bound on scalar curvature \eqref{shorttimepropscalar}. In summary, for all $x\in M$, there holds
\be\label{unscaledpropertiesboundedcurv}
\begin{cases}
&\sR_{g(0)}(x)\geq  -n; \\
&|\Rm|_{g(t)}(x)\leq \frac{\alpha}{t} \; \text{ for all } t\in [0,T']; \\
&\Vol_{g(0)} B_{g(0)}(x,r_0) =: v_{x,r_0} r_0^n; \; \text{ and } \\
&\Vol_{g(t)} B_{g(t)}(y,r) \geq c_1(n)\lambda^{-\frac{n}{2}} v_{x,r_0} r^n\; \text{ for all } y\in B_{g(0)}(x,r_0),  r\in (0,r_0],\; t\in [0,T'].
\end{cases}
\ee

Now we parabolically rescale $g(t)$ by definining
\[
\tilde g(t)=r_0^{-2} g\left(r_0^2 t\right)
\]
where $r_0(n,\lambda)=\frac{c_1(n)}{2^{3n+6} \lambda^{\frac{n}{2}}}$. Note that this rescaling has scaled distances $r_0\to 1$ and $\frac{1}{8}\to \frac{2^{3n+3} \lambda^{\frac{n}{2}}}{c_1(n)}$. Therefore, if we write $T_2=r_0^{-2}  T'$, we can see from the properties in \eqref{unscaledpropertiesboundedcurv}, and our volume doubling estimate \eqref{shorttimepropdoubling}, that the flow $\tilde g(t)$, in particular, satisfies 
\[
\begin{cases}
&\sR_{\tilde g(0)}(x)\geq -1;\\
&|\Rm|_{\tilde g(t)}(x)\leq t\inv \; \text{ for all } t\in [0, T_2]; \\
&\Vol_{\tilde g(0)} B_{\tilde g(0)}(x,2) \leq 2^{2n+1}v_{x,r_0} ;\; \text{ and } \\
&\Vol_{\tilde g(t)} B_{\tilde g(t)}(y,r) \geq c_1(n) \lambda^{-\frac{n}{2}} v_{x,r_0} r^n\; \text{ for all } y\in B_{\tilde g(0)}(x,1),  r\in (0,1/2],\; t\in [0, T_2].
\end{cases}
\]
Then Lemma \ref{EBL} implies that $B_{\tilde g(0)}(x, 1)\subset B_{\tilde g(t)}(x,2^{n+2}2^{2n+1} \lambda^{\frac{n}{2}} c_1(n)\inv )$ for all $x\in M$ and $t\in [0,T_2\land \widehat T(n,1,1)]$. Scaling back and recalling the last condition on the smallness of $\widetilde T$ in \eqref{boundedcurvdefnofT}, we can see that for all $i\in \{1, \dots, N\}$ there holds
\[
B_{g(0)}(x_i,r_0) \subset B_{g(t)}(x_i,1/8).
\]
Now starting from $r=r_0$, we apply our volume doubling estimate \eqref{shorttimepropdoubling} $k$ times, where $k(n,\lambda)$ is the unique integer so that so that
\[
4\leq 2^kr_0<8.
\]
Recalling that $x_i\in B_{g(t)}(y, 3/2)$ and the containment \eqref{tin0containmentbounded}, the doubling estimate yields
\[
2^{(2n+1) k}\Vol_{g(0)}B_{g(0)}(x_i,r_0) \geq \Vol_{g(0)}B_{g(0)}(x_i,2^k r_0) \geq \Vol_{g(0)}B_{g(0)}(y, 2)\geq \Vol_{g(0)}B_{g(t)}(y, 7/4).
\]
On the other hand, the $\{B_{g(0)}(x_i,r_0)\}_{i=1}^N$ are disjoint and each is contained within $B_{g(t)}(y,7/4)$ by \eqref{coveringclaime1}. Therefore
\[
N 2^{-(2n+1) k}\Vol_{g(0)}B_{g(t)}(y, 7/4)\leq \sum_{i=1}^N \Vol_{g(0)}B_{g(0)}(x_i,r_0)\leq \Vol_{g(0)}B_{g(t)}(y,7/4)
\]
from which we can readily conclude that $N\leq 2^{(2n+1) k}$. So if we define $\Lambda$ by
\be\label{boundonLambda}
\Lambda(n,\lambda):=8N\leq 8\left(\frac{2^{3n+9} \lambda^{\frac{n}{2}}}{c_1(n)}\right)^{2n+1}\leq 8\left(2^{3n+9} \lambda^{\frac{n}{2}}(2^{n+4} 1000n)^{\frac{n}{2}}\right)^{2n+1},
\ee
then we can combine \eqref{boundedcurvsmallinttimet} and \eqref{coveringclaime1} to see that for all $(y,t)\in M\times [0, T']$, there holds 
\be\label{shorttimee1234}
\int_{B_{g(t)}(y, 3/2)} |\Rm|^{\frac{n}{2}}\,dV_t\leq \sum_{i=1}^N \int_{B_{g(t)}(x_i, \frac{1}{4})} |\Rm|^{\frac{n}{2}}\,dV_t\leq 4N\delta v_0=\frac{1}{2}\Lambda \delta v_0.
\ee
But since $(M,g(t))$ is a bounded curvature flow, condition \eqref{shorttimee1234} (via an application of Lemma \ref{uniformincreasingconcentrationlem}) is in contradiction to the maximality of $T'$ when $T'<T\land \widetilde T$. This completes the proof. 
\end{proof}

\section{Proof of Theorem \ref{main2}}\label{sectionproofofmain}

\vspace{5pt}

Before beginning the proof of Theorem \ref{main2}, we need to recall some of the details from \cite{CHL}. The following is essentially taken out of the proof of their main theorem, with only one modification (as we will describe after).

\begin{thm}[{\cite[Proof of Theorem 1.1]{CHL}}]\label{CHLmainthm}
Let $(M^n, g_0, x_0)$ be some complete pointed noncompact Riemannian manifold of dimension $n\geq 4$ (not assumed bounded curvature), which exhibits a smooth function $\rho: M\to \R$ such that for some $C>0$ and all $x\in M$, there holds: 
\be\label{unboundedcurvdistancelikefunctionproperties}
\begin{cases}
&C\inv d_{g_0}(x_0,x)-C \leq \rho(x)\leq d_{g_0}(x_0, x) ;\\
& |\nabla \rho(x)|^2+|\Delta \rho(x)|\leq C; \text{ and }\\
&\int_{B_{g_0}(x,1)} |\nabla^2 \rho|^n \,dV_{g_0} \leq C .
\end{cases}
\ee
Then there exists a sequence sets of $U_j\subset M$ exhausting $M$, and metrics $g_j$ on $U_j$ such that: each $(U_j, g_j)$ is compete with bounded curvature; for any compact $\Omega\subset M$, $g_j|_\Omega=g_0|_\Omega$ for all $j$ large enough; and for each $x\in U_j$, the metric $g_j$ satisfy the locally equivalent condition
\be\label{unboundedcurvuniformlyequivalent}
c_{x,j}^2 g_0\leq g_j\leq C_{x,j}^2 g_0 \text{ on } B_{g_j}(x, 1) \text{ for some } C_{x,j}\geq c_{x,j}\geq 1 \text{ satisfying } \left(\frac{C_{x,j}}{c_{x,j}}\right)^n\leq 2. 
\ee
Moreover, if $\inf_{z\in M}\Rc_{g_0}(z)>-\infty$ then for any $\lambda, L,v_0, \alpha$, we may assume that each of the following implications holds (by possibly deleting finitely many of the $g_j$): 

\begin{enumerate}[(a)]
\item If $\Rc_{g_0}(x)\geq -\lambda$ for all $x\in M$, then $\Rc_{g_j}(x)\geq -2\lambda$ for all $x\in U_j$;  \label{CHLmainthmproof1}
\item If $\Vol_{g_0} B_{g_0}(x,2r)\leq L \cdot \Vol_{g_0} B_{g_0}(x,r)\leq L v_0 r^n$ for all $x\in M$ and $r\leq 1$, then  $\Vol_{g_j} B_{g_j}(x,2r)\leq 2L^2 \cdot \Vol_{g_j} B_{g_j}(x,r)\leq 4L^2 v_0 r^n$ for all $x\in U_j$, $r\leq 1/2$; and  \label{CHLmainthmproof2}
\item If $\int_{B_{g_0}(x,1)}|\Rm|_{g_0}^{\frac{n}{2}}\,dV_{g_0}\leq \alpha$ for all $x\in M$, then $\int_{B_{g_j}(x,1)}|\Rm|_{g_j}^{\frac{n}{2}}\,dV_{g_j}\leq 4\alpha$ for all $x\in U_j$. \label{CHLmainthmproof4}
\end{enumerate}
\end{thm}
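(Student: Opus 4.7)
The plan is to build each $g_j$ as a conformal modification of $g_0$ on the sublevel sets $U_j := \{\rho < 2j\}$. Specifically, I would take $g_j = e^{2 u_j} g_0$ on $U_j$ for a carefully chosen smooth $u_j \colon U_j \to [0,\infty)$ that vanishes on $\{\rho \le j\}$ and blows up at $\{\rho = 2j\}$. The properties that $(U_j, g_j)$ is complete (distances to $\partial U_j$ diverge because $e^{u_j}$ diverges) and that $g_j|_\Omega = g_0|_\Omega$ on any fixed compact $\Omega \subset M$ for all $j$ large (since $\Omega \subset \{\rho \le j\}$ eventually) would then be immediate.

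To construct $u_j$, fix a smooth profile $\phi \colon \R \to [0,\infty)$ with $\phi \equiv 0$ on $(-\infty, 0]$, $\phi(s) \to \infty$ as $s \to 1^-$, and $\|\phi'\|_\infty \le \epsilon_0$ for a small constant $\epsilon_0 = \epsilon_0(n, C)$ to be fixed later; e.g.\ $\phi(s) = \epsilon_0 \max(0, -\log(1-s))$ smoothed near $s = 0$. A direct choice $u_j := \phi((\rho - j)/j)$ produces a complete metric agreeing with $g_0$ on a large interior region, but the conformal curvature formula involves $\nabla^2 u_j \sim \phi' \, \nabla^2 \rho /j$, and our hypotheses provide only $\nabla^2 \rho \in L^n_{\mathrm{loc}}$, not $L^\infty$. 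To remedy this, I would precede the conformal change by a smoothing step: replace $\rho$ by $\tilde\rho_j$ obtained from a short-time heat-kernel regularization on $(M,g_0)$ at a scale $\tau_j$ tuned to $j$, arranging that $|\nabla^2 \tilde\rho_j| \le K_j$ pointwise (the constant $K_j$ is allowed to depend on $j$ since we only need $g_j$ to have bounded curvature for each fixed $j$), while retaining $|\tilde\rho_j - \rho| \le 1$, $|\nabla \tilde\rho_j|^2 + |\Delta \tilde\rho_j| \le 2C$, and $\int_{B_{g_0}(x,1)} |\nabla^2 \tilde\rho_j|^n\, dV_{g_0} \le 2C$. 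Setting $u_j := \phi((\tilde\rho_j - j)/j)$ then gives a smooth conformal factor with pointwise controlled first and second derivatives, from which bounded curvature of $g_j$ follows via the conformal transformation formula $\Rc_{g_j} = \Rc_{g_0} - (n-2)(\nabla^2 u_j - du_j \otimes du_j) - (\Delta u_j + (n-2)|\nabla u_j|^2)\, g_0$ and its analogue for $|\Rm|$.

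The locally equivalent condition \eqref{unboundedcurvuniformlyequivalent} is checked by bounding the oscillation of $u_j$ on a $g_j$-unit ball: since $|\nabla u_j|_{g_j} = e^{-u_j}|\nabla u_j|_{g_0} \le \epsilon_0 \cdot 2C$, the oscillation is $\le 2\epsilon_0 C$, so $(C_{x,j}/c_{x,j})^n \le e^{2n\epsilon_0 C} \le 2$ upon choosing $\epsilon_0$ small. Preservation of (a)--(c) then proceeds as follows. On the interior $\{\rho \le j\}$, $g_j \equiv g_0$ and everything transfers verbatim; only the collar $\{j < \rho < 2j\}$ requires work. For (a), the Ricci correction is bounded by $(n-2)(|\nabla^2 u_j| + |\nabla u_j|^2) + |\Delta u_j|$, which is $O(\epsilon_0 \cdot (C + K_j))$ but can be absorbed into the $-2\lambda$ slack by shrinking $\epsilon_0$; crucially, taking $-\lambda$ on the left and $-2\lambda$ on the right leaves room of size $\lambda$, which can be matched by making $\epsilon_0$ small uniformly in $j$. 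For (b), \eqref{unboundedcurvuniformlyequivalent} ensures $g_j$-balls are sandwiched between $g_0$-balls at comparable scales, and the volume form differs by a factor $\le 2$, giving the stated $2L^2$ and $4L^2 v_0$ constants. For (c), the scale-invariant integrand $|\Rm|^{n/2}\, dV$ transforms with bounded multiplicative distortion under \eqref{unboundedcurvuniformlyequivalent}, and the additive correction from the conformal perturbation is controlled in $L^1$ on unit balls by $\int |\nabla^2 u_j|^n + |\nabla u_j|^{2n}$, which in turn is bounded by the assumed $L^n$ control on $\nabla^2 \tilde\rho_j$, yielding the constant $4\alpha$.

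The main obstacle is the smoothing in Step~2: producing $\tilde\rho_j$ such that (i) it admits pointwise second-derivative bounds (so that $g_j$ genuinely has bounded curvature) and (ii) all three quantitative hypotheses (a)--(c) degrade only by the stated absolute multiplicative factors, independently of $j$. This is precisely where the $L^n$ nature of the Hessian bound on $\rho$ is essential, since it is the scale-invariant norm matching the curvature concentration in (c); the tradeoff is that $K_j$ may depend on $j$, which is harmless because bounded curvature of $g_j$ is allowed to depend on $j$ in the conclusion.
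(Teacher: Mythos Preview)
Your overall strategy of taking $g_j=e^{2u_j}g_0$ on sublevel sets of the distance-like function is exactly the conformal construction from \cite{CHL} that the paper cites. However, there are two genuine gaps in the execution.

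First, your profile $\phi$ is impossible as stated: if $\|\phi'\|_\infty\le\epsilon_0$ then $\phi(s)\le\phi(0)+\epsilon_0 s\le\epsilon_0$ on $[0,1)$, so $\phi$ cannot diverge at $s=1^-$ and $(U_j,g_j)$ will not be complete. In the actual construction $\phi'$ is necessarily unbounded; control on quantities like $|\nabla u_j|_{g_j}=e^{-u_j}|\nabla u_j|_{g_0}$ and on the oscillation in \eqref{unboundedcurvuniformlyequivalent} must come from the specific relationship between the growth rates of $\phi$, $\phi'$, $\phi''$ (for a log-type profile one has $|\phi'|\lesssim e^\phi$ and $|\phi''|\lesssim e^{2\phi}$), together with the $1/j$ chain-rule factor --- not from $\phi'$ being small.

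Second, your argument for (a) does not close. You bound the Ricci correction by $O(\epsilon_0(C+K_j))$ and then assert this can be absorbed into the slack $\lambda$ by shrinking $\epsilon_0$ ``uniformly in $j$'', but $K_j$ depends on $j$ and you give no control on its growth, so a uniform choice of $\epsilon_0$ is not available. The paper's mechanism is different and is what explains the clause ``by possibly deleting finitely many of the $g_j$'' in the theorem: with the CHL conformal factor $F_R$ (here $R$ plays the role of $j$), the conformal change formula yields a Ricci correction of size $C_n R^{-2}$ relative to $g_{R,0}$, and this drops below $\lambda$ once $R$ is large. The smallness comes from sending $j\to\infty$, not from tuning an auxiliary $\epsilon_0$. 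In particular the heat-kernel smoothing step you introduce is not part of the construction --- and would in any case be difficult to justify, since $(M,g_0)$ has unbounded curvature and the Gaussian heat-kernel estimates needed to preserve the three quantitative bounds on $\rho$ are not available in that generality. You correctly flag this smoothing as ``the main obstacle'', but the proposal does not resolve it.
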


\vspace{5pt}

\begin{proof}
Properties \eqref{CHLmainthmproof2} and \eqref{CHLmainthmproof4} are taken directly from {\cite[Claims 4.2, 4.4]{CHL}}, where we have assumed the $\kappa$ there is small enough so that $(1+c_2\kappa)^n\leq 2$, which is valid because $c_2$ was an absolute constant. So it remains to see that implication \eqref{CHLmainthmproof1} holds in our setting. Note that in \cite{CHL}, property \eqref{CHLmainthmproof1} is replaced with the synonymous implication on scalar curvature $\sR_{g_0}\geq -\lambda \Rightarrow \sR_{g_j}\geq -2\lambda$. We now describe how this modification is acheived. 

\vspace{5pt}

Using the notation from \cite{CHL} of $g_j=g_{R,0}=e^{2F_R}g_0$, where $F_R=\mathfrak{F}(\frac{\rho}{R})$ for $R$ sufficiently large (depending on $g_0$), we can compute the Ricci curvature of $g_{R,0}=g_j$ in normal coordinates $x^i$ at the point $p$ (with respect to $g_0$) by applying the conformal change formula:
\ba
&\Rc_{g_{R,0}}(\partial_i ,\partial_i )=\Rc_{g_0}(\partial_i ,\partial_i )-(n-2)\left(\nabla^{g_0}_i \nabla^{g_0}_i F_R-\nabla^{g_0}_iF_R\nabla^{g_0}_i F_R\right)-\left(\Delta^{g_0} F_R+(n-2) |dF_R|_{g_0}^2\right) g_0(\partial_i ,\partial_i )\\&\geq 
-\lambda g_0(\partial_i,\partial_i)-C(n)\left[\frac{|\mathfrak{F}''|}{R^2}\left|\frac{\partial \rho}{\partial x^i}\right|^2+\frac{|\mathfrak{F}'|}{R}\left|\frac{\partial^2 \rho}{\partial x^i\partial x^i}\right|+\frac{|\mathfrak{F}'|^2}{R^2}\left|\frac{\partial \rho}{\partial x^i}\right|^2\right]-\left(\Delta^{g_0} F_R+(n-2) |dF_R|_{g_0}^2\right) g_0(\partial_i ,\partial_i )\\&\geq 
-\lambda g_0(\partial_i,\partial_i)-C(n)\left[\frac{|\mathfrak{F}''|}{R^2}|\nabla^{g_0} \rho|^2+ \frac{|\mathfrak{F}'|}{R}|\Delta^{g_0} \rho|+\frac{|\mathfrak{F}'|^2}{R^2}|\nabla^{g_0} \rho|^2\right] g_0(\partial_i,\partial_i)\\&\geq 
-\lambda g_0(\partial_i,\partial_i)-\frac{C(n,g_0)}{R}e^{2F_R} g_0(\partial_i,\partial_i)=\left(-e^{-2F_R}\lambda -\frac{C(n,g_0)}{R}\right) e^{2F_R}g_{0}(\partial_i,\partial_i)\geq -2\lambda g_{R,0}(\partial_i,\partial_i).
\end{align*}
Here we have assumed $R\geq C(n,g_0) \lambda$, and have used that $\mathfrak{F}\geq 0$, $\text{exp}(-k\mathfrak{F})\mathfrak{F}^{(k)}$ is uniformly bounded for $k=1,2$ (by an absolute constant), and that $|\nabla^{g_0} \rho|$, $|\Delta^{g_0} \rho|$ are uniformly bounded (by a constant depending on $g_0$). We direct the reader to \cite{CHL} for more details. 
\end{proof}

\vspace{5pt}

We will be applying Theorem \ref{CHLmainthm} with the underlying manifold $(M,g_0)$ having nonnegative Ricci curvature and positive asymptotic volume ratio. In order to even get started, we clearly we must first demonstrate the existence of a distance-like function $\rho$ with the desired properties.

\vspace{5pt}

\begin{lem}\label{existenceofdistancefunctionlem}
Let $(M^n,g,x_0)$, $n\geq 4$ be a complete pointed Riemannian manifold with $\Rc_g\geq 0$, $\AVR(g)>0$, and $|\Rm|_g\in L^{\frac{n}{2}}(M,g)$. Then there exists a smooth $\rho: M\to \R$ and a constant $C>0$ such that condition \eqref{unboundedcurvdistancelikefunctionproperties} holds uniformly for all $x\in M$.\\
\end{lem}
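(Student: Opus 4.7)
The plan is to construct $\rho$ as a smoothing of the distance function $b(x) := d_g(x_0, x)$ and then verify the three required properties separately. By the Laplacian comparison theorem under $\Rc_g \geq 0$, we have $\Delta b \leq (n-1)/b$ in the barrier sense, so $b$ is itself distance-like with a good Laplacian upper bound away from the cut locus. To produce a smooth function I would smooth $b$ by one of several standard techniques (Greene--Wu convolution in harmonic coordinates, or a heat semigroup smoothing $\rho := e^{-\tau \Delta_g} b$ for a fixed small $\tau$). The first two properties in \eqref{unboundedcurvdistancelikefunctionproperties} --- distance-like behavior $C^{-1} b - C \leq \rho \leq b$ and pointwise control of $|\nabla \rho|^2 + |\Delta \rho|$ --- then follow fairly routinely from standard properties of the chosen smoothing, combined with Laplacian comparison and the uniform non-collapsing volume estimate coming from $\AVR(g) > 0$.

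The technical heart is the last condition, the uniform $L^n$ bound on $|\nabla^2 \rho|$ on unit balls. I would attack this via the Bochner formula
\[
\Delta |\nabla \rho|_g^2 = 2|\nabla^2 \rho|_g^2 + 2 \langle \nabla \Delta \rho, \nabla \rho\rangle_g + 2 \Rc_g(\nabla \rho, \nabla \rho).
\]
Pair with a cutoff $\phi$ supported in $B_g(x,2)$ and equal to one on $B_g(x,1)$, integrate by parts, and use the already-established bounds on $|\nabla \rho|^2$ and $|\Delta \rho|$; the outcome is
\[
\int_{B_g(x,1)} |\nabla^2 \rho|_g^2 \, dV_g \leq C\left(1 + \int_{B_g(x,2)} |\Rm|_g \, dV_g\right),
\]
which is uniformly bounded using H\"older's inequality together with the hypothesis $|\Rm|_g \in L^{n/2}(M,g)$ and the Bishop--Gromov volume bound. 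To upgrade from $L^2$ to $L^n$, I would appeal to the global Sobolev inequality on $(M,g)$ supplied by Corollary \ref{SobfromAVRcor} and run an elliptic/Moser-type iteration on $|\nabla^2 \rho|_g$: differentiating the Bochner identity produces a differential inequality of the form $\Delta |\nabla^2 \rho|_g \geq -|\Rm|_g \cdot |\nabla^2 \rho|_g - C$, whose leading coefficient sits in $L^{n/2}$ with uniformly controlled norm on unit balls. Iteration then yields a uniform pointwise bound on $|\nabla^2 \rho|_g$ on $B_g(x,1)$, and combining with $\Vol_g B_g(x,1) \leq C$ from Bishop--Gromov gives the required uniform $L^n$ estimate.

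The main obstacle will be executing this last iteration cleanly: it requires either running Moser iteration with an $L^{n/2}$ coefficient --- where one genuinely uses that $\int_{B_g(x,1)} |\Rm|_g^{n/2}$ is small for $x$ far from some compact set (immediate from $|\Rm|_g \in L^{n/2}$) and otherwise uniformly bounded on the compact part by a covering argument --- or alternatively deducing a Calder\'on--Zygmund-style estimate for the Poisson equation $\Delta_g \rho = f$ with manifold corrections controlled in $L^{n/2}$. A secondary technical nuisance is that $b$ is only Lipschitz and non-smooth on the cut locus of $x_0$, but this issue is absorbed into the smoothing step so that every subsequent estimate is carried out on a manifestly $C^\infty$ function, with the Laplacian comparison bound for $b$ entering only through a comparison argument (e.g., via the maximum principle applied to $\rho - b$ against a barrier) used to establish the distance-like behavior of $\rho$.
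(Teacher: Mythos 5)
Your proposal takes a genuinely different route from the paper, and as written it has gaps that I do not think are repairable without importing the machinery the paper actually uses. The paper does not construct $\rho$ by hand: it observes that $\bar\nu(M,g)>-\infty$ (via Corollary \ref{SobfromAVRcor} and Lemma \ref{nufromSob}) and that $|\Rm|_g\in L^{n/2}(M)$ forces the \emph{local} curvature concentration $\sup_x\int_{B_g(x,r)}|\Rm|_g^{n/2}$ to be uniformly small at some scale $r\in(0,1]$ (this is {\cite[Lemma 4.2]{CHL}}); it then cites {\cite[Theorem 2.1]{CHL}} for the rescaled metric $r^{-2}g$ to obtain $\tilde\rho$, scales back, and upgrades the $L^n$ Hessian bound from $r$-balls to unit balls by a covering argument whose multiplicity is controlled by $\AVR(g)>0$ and Bishop--Gromov. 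The construction of a distance-like function with \emph{two-sided} Laplacian control and $L^n$ Hessian bounds is precisely the content of that cited theorem (whose proof itself runs a Ricci flow smoothing), so the heavy lifting you are attempting to do directly is exactly what is being outsourced.

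Two concrete gaps in your argument. First, the two-sided bound $|\Delta\rho|\leq C$ is not ``routine'': under $\Rc_g\geq 0$, Laplacian comparison gives only the upper bound $\Delta b\leq (n-1)/b$, and there is no pointwise lower bound on $\Delta b$ without curvature \emph{upper} bounds; a generic smoothing of $b$ inherits only the one-sided estimate. (This can be rescued for the heat-semigroup smoothing by combining the uniform total-variation bound $\int_{B(x,1)}|\Delta b|\leq C(n)$ --- which follows from the one-sided comparison plus the divergence theorem --- with Li--Yau heat kernel bounds, but that argument needs to be made and you have not made it.) Second, and more seriously, the differential inequality $\Delta|\nabla^2\rho|\geq -|\Rm|\,|\nabla^2\rho|-C$ driving your Moser iteration is unjustified: commuting $\Delta$ past $\nabla^2$ produces terms of the schematic form $\nabla\Rm*\nabla\rho$ and $\nabla^2\Delta\rho$, neither of which is controlled by the quantities you have established (you know $|\Delta\rho|\leq C$, not anything about its Hessian, and nothing about $\nabla\Rm$). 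Moreover your stated endpoint --- a uniform \emph{pointwise} bound on $|\nabla^2\rho|$ --- cannot be correct: $L^{n/2}$ is the critical exponent for which Moser iteration with potential $|\Rm|$ fails to reach $L^\infty$, and a pointwise Hessian bound for a distance-like function would morally require pointwise curvature bounds, which are not assumed (compare the Eguchi--Hanson-type examples the paper discusses). At best one could hope to climb from $L^2$ to $L^n$ in finitely many iteration steps, each of which requires quantitative smallness of the local $\|\Rm\|_{L^{n/2}}$ norm, and even that hinges on the unproven differential inequality. As it stands, the last third of your proposal does not go through.
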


\begin{proof}
First note by Corollary \ref{SobfromAVRcor} and Lemma \ref{nufromSob}, there holds 
\[
\bar\nu(M, g)\geq \log(C_1(n)\inv \AVR(g))=: -A>-\infty.
\]
Now since $|\Rm|_g\in L^{\frac{n}{2}}$, {\cite[Lemma 4.2]{CHL}} implies the existence of some $r\in (0,1]$ such that
\[
\sup_{x\in M }\int_{B_g(x,r)} |\Rm|_g^{\frac{n}{2}} \,dV_g \leq \delta(n, A, 1)^{\frac{n}{2}}
\]
where $\delta(n, A,1)$ is from {\cite[Theorem 2.1]{CHL}}. Then that result applied to the scaled up metric $\tilde g=r^{-2}g$ implies the existence of a smooth $\tilde g$-distance-like function $\tilde \rho$ which satisfies the properties in \eqref{unboundedcurvdistancelikefunctionproperties} relative to $\tilde g$ for some $C>0$. Then by taking $\rho=r\tilde \rho$ we can see that $\rho$ is a $g$-distance-like function satisfying \eqref{unboundedcurvdistancelikefunctionproperties} relative to $g$ for some (possibly different) $C>0$, albeit with the integral condition replaced by
\[
\int_{B_g(x,r)} |\nabla^2 \rho|^n \,dV_g \leq C.
\]
Now for any $x\in M$, let $\{x_i\}_{i=1}^N\subset B_g(x,1)$ be some set of points such that $\{B_g(x_i, r/2)\}_{i=1}^N$ are mutually disjoint but so that 
\[
\bigsqcup_{i=1}^N B_g(x_i, r/2)\subset B_g(x,2)\;\;\text{ and } \;\; B_g(x,1)\subset\bigcup_{i=1}^N B_g(x_i, r).
\]
Then by the assumption of $\Rc_g\geq 0$ and volume comparison, we have
\[
\omega_n 2^n\geq \Vol_g B_g(x,2)\geq \sum_{i=1}^N \Vol_g B_g(x_i, r/2)\geq N \omega_n \AVR(g) (r/2)^n.
\]
Therefore by the nature of the covering, there holds
\[
\int_{B(x,1)}|\nabla^2 \rho|^n \,dV_g \leq \sum_{i=1}^N \int_{B_g(x,r)} |\nabla^2 \rho|^n \,dV_g \leq  NC\leq 4^n r^{-n}\AVR(g)\inv C.
\]
This completes the proof of the lemma. 
\end{proof}

\vspace{5pt}

We therefore have the following approximation result with \emph{bounded curvature} metrics, from which we will readily apply our short-time existence result Theorem \ref{shorttimeexistence}. 

\vspace{5pt}

\begin{prop}\label{approximationunboundedthm}
For any $n\geq 4$ and $v>0$, there exists a dimensional constant $C(n)\geq 1$ such that the following holds. Let $(M^n, g)$ be a complete noncompact Riemannian manifold (not assumed bounded curvature) with $\Rc_g\geq 0$, $\AVR(g)\geq v$, and 
\[
\int_M|\Rm|^{\frac{n}{2}}_g\,dV_g\leq \alpha
\]
for some $\alpha>0$. Then there exists a sequence sets of $U_j\subset M$ exhausting $M$, and metrics $g_j$ on $U_j$ such that each $(U_j, g_j)$ is compete with bounded curvature and for any compact $\Omega\subset M$, $g_j|_\Omega=g|_\Omega$ for all $j$ large enough. Moreover each of the following conditions holds uniformly for every $x\in U_j$:
\begin{enumerate}[(i)]
\item $\Rc_{g_j}(x)\geq -1$;  \label{approximationunboundedthmcond1}
\item $\Vol_{g_j} B_{g_j}(x,2r)\leq 2^{2n+1}  \Vol_{g_j} B_{g_j}(x,r)\leq 2^{2n+2} \omega_n r^n$ for each $r\leq 4$;  \label{approximationunboundedthmcond2}
\item $\int_{B_{g_j}(x,4)}|\Rm|_{g_j}^{\frac{n}{2}}\,dV_{g_j}\leq 4\alpha$; \label{approximationunboundedthmcond3}
\item $\frac{\Vol_{g_j}B_{g_j}(x, r)}{r^n}\geq \frac{\omega_n}{2}v$ for each $r\leq 4$; and\label{approximationunboundedthmcond4a}
\item $\Sob(B_{g_j}(x,r),g_j)\leq \left[C(n)\left(\frac{\Vol_{g_j}B_{g_j}(x, r)}{r^n}\right)^{-\frac{2}{n}},0\right]$ for each $r\leq 4$.\label{approximationunboundedthmcond4b}
\end{enumerate}
\end{prop}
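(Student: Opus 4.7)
My plan is to combine Lemma \ref{existenceofdistancefunctionlem}, Theorem \ref{CHLmainthm}, and the Sobolev-from-volume results of Section \ref{sectionsobolev}, after a rescaling to match the scales of the conclusions. First, I would apply Lemma \ref{existenceofdistancefunctionlem}---its hypothesis $|\Rm|_g \in L^{n/2}$ is supplied by $\int_M |\Rm|_g^{n/2}\,dV_g \leq \alpha$---to obtain a smooth $\rho: M \to \R$ satisfying \eqref{unboundedcurvdistancelikefunctionproperties} relative to $g$. Since Theorem \ref{CHLmainthm} produces its estimates on unit balls whereas the Proposition demands them on balls of radius up to $4$, I would first pass to $\hat g := g/64$ and $\hat \rho := \rho/8$. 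A direct calculation (using Bishop-Gromov to upgrade the $L^n$-Hessian estimate from $g$-unit balls to $g$-balls of radius $8$) verifies that $\hat \rho$ is distance-like for $\hat g$ in the sense of \eqref{unboundedcurvdistancelikefunctionproperties}, and the remaining hypotheses $\Rc_{\hat g} \geq 0$, $\AVR(\hat g) \geq v$, and $\int |\Rm|_{\hat g}^{n/2}\,dV_{\hat g} \leq \alpha$ are scale invariant.

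Next, I would invoke Theorem \ref{CHLmainthm} on $(M, \hat g, \hat \rho)$ with parameters $\lambda$ small, $L = 2^n$, $v_0 = \omega_n$, and $\alpha$ as given; Bishop-Gromov supplies the doubling hypothesis of implication \eqref{CHLmainthmproof2}, since $\Vol_{\hat g} B_{\hat g}(x, 2r) \leq 2^n \Vol_{\hat g} B_{\hat g}(x, r) \leq 2^n \omega_n r^n$. This yields an exhaustion $\{U_j\}$ and bounded curvature metrics $\hat g_j$ on $U_j$, bi-Lipschitz equivalent to $\hat g$ on $B_{\hat g_j}(x, 1)$ with ratio $(C_{x,j}/c_{x,j})^n \leq 2$. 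Define $g_j := 64 \hat g_j$; then $g_j$ is bounded curvature, agrees with $g$ on compact sets for $j$ large, and the equivalence $c_{x,j}^2 g \leq g_j \leq C_{x,j}^2 g$ holds on $B_{g_j}(x, 8)$. Conditions \eqref{approximationunboundedthmcond1}, \eqref{approximationunboundedthmcond2}, \eqref{approximationunboundedthmcond3} then follow by translating implications (a), (b), (c) of Theorem \ref{CHLmainthm} back through the scaling $g_j = 64 \hat g_j$: doubling at $\hat g_j$-radii $\leq 1/2$ becomes doubling at $g_j$-radii $\leq 4$; the integral bound on a $\hat g_j$-unit ball becomes a bound on $B_{g_j}(x, 8) \supset B_{g_j}(x, 4)$; and the rescaled Ricci lower bound yields $\Rc_{g_j} \geq -1$ provided $\lambda$ is chosen small enough.

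For conditions \eqref{approximationunboundedthmcond4a} and \eqref{approximationunboundedthmcond4b}, the idea is to transfer estimates from $(M, g)$ to $(U_j, g_j)$ using the bi-Lipschitz equivalence on $B_{g_j}(x, 8) \supset B_{g_j}(x, 4)$. For \eqref{approximationunboundedthmcond4a}, Bishop-Gromov on $(M, g)$ yields $\Vol_g B_g(x, r) \geq \omega_n v r^n$ for all $x, r$, and the bi-Lipschitz inequality gives
\[
\Vol_{g_j} B_{g_j}(x, r) \geq c_{x,j}^n \Vol_g B_g(x, r/C_{x,j}) \geq (c_{x,j}/C_{x,j})^n \omega_n v r^n \geq \tfrac{\omega_n v}{2} r^n
\]
for any $r \leq 4$. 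For \eqref{approximationunboundedthmcond4b}, Lemma \ref{SobfromAVR2} applied to $(M, g)$ gives, for each $r \leq 4$, a Sobolev inequality on $B_g(x, r)$ with constant $C(n)(\Vol_g B_g(x,r)/r^n)^{-2/n}$ and no zeroth-order term. The bi-Lipschitz transfer, together with the comparability of $\Vol_g B_g(x, \cdot)$ and $\Vol_{g_j} B_{g_j}(x, \cdot)$ up to factors of $(C_{x,j}/c_{x,j})^n \leq 2$, produces the corresponding Sobolev inequality on $B_{g_j}(x, r)$ in $g_j$ with constant of the form $C(n)(\Vol_{g_j}B_{g_j}(x,r)/r^n)^{-2/n}$, after absorbing fixed dimensional factors into $C(n)$.

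The main obstacle I expect is careful bookkeeping of the rescaling and the bi-Lipschitz transfers, especially in confirming that the Sobolev constant in \eqref{approximationunboundedthmcond4b} can be rewritten purely in terms of $\Vol_{g_j}B_{g_j}(x,r)/r^n$ (rather than $\Vol_g B_g(x,r)/r^n$) with only a dimensional loss. A secondary nuisance is that the bi-Lipschitz equivalence of Theorem \ref{CHLmainthm} is stated on $B_{g_j}(x,1)$ centered at each point of $U_j$ individually, so transferring estimates on $g_j$-balls of radius $4$ requires exactly the rescaling trick above; without it, one would have to cover $B_{g_j}(x,4)$ by unit balls and patch together local equivalences. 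Once these technical points are handled, all five conclusions follow essentially mechanically.
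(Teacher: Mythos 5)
Your proposal is correct and follows essentially the same route as the paper: rescale so that the radius-$4$ conclusions correspond to the unit-ball scale of Theorem \ref{CHLmainthm}, feed in the distance-like function from Lemma \ref{existenceofdistancefunctionlem}, read off (i)--(iii) from implications (a)--(c) together with Bishop--Gromov, and obtain (iv)--(v) by transferring the volume lower bound and the Sobolev inequality of Lemma \ref{SobfromAVR2} through the local bi-Lipschitz equivalence \eqref{unboundedcurvuniformlyequivalent}. The only cosmetic difference is your choice of scaling factor ($64$ versus the paper's $32$) and that the paper tracks the comparability of $\Vol_{g_j}B_{g_j}(x,r)/r^n$ with $\Vol_{g_0}B_{g_0}(x,c_{x,j}^{-1}r)/(c_{x,j}^{-1}r)^n$ slightly more explicitly, but these do not change the argument.
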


\begin{proof}
By the scale invariance of the hypotheses and each of the conclusions save \eqref{approximationunboundedthmcond1}, we may prove the theorem with $g_0=\frac{1}{32}g$ which allows us to only require that conditions \eqref{approximationunboundedthmcond2}, \eqref{approximationunboundedthmcond3}, \eqref{approximationunboundedthmcond4a}, \eqref{approximationunboundedthmcond4b} only hold for scales $r\leq 1/2$ (or larger). Clearly Lemma \ref{existenceofdistancefunctionlem} allows us to apply Theorem \ref{CHLmainthm}, which gives the existence of the $(U_j, g_j)$. Moreover, properties \eqref{approximationunboundedthmcond1}, \eqref{approximationunboundedthmcond2}, \eqref{approximationunboundedthmcond3} follow immediately from Theorem \ref{CHLmainthm} as well because $\Rc_{g_0}\geq 0$ and volume comparison implies
\[
\frac{\Vol_{g_0} B_{g_0}(x,2r)}{(2r)^n}\leq \frac{\Vol_{g_0} B_{g_0}(x,r)}{r^n}\leq \omega_n
\]
for all $r>0$. Now fix some $j\in \N$, $x\in U_j$, $r\in (0,1]$ and write $\frac{\Vol_{g_j}B_{g_j}(x, r)}{r^n}=v_{xjr}$. By the locally equivalent condition \eqref{unboundedcurvuniformlyequivalent} and the fact that $\AVR(g_0)\geq v$, we obtain
\[
v_{xjr}\geq c_{x,j}^n \frac{\Vol_{g_0}B_{g_j}(x, r)}{r^n}\geq c_{x,j}^n \frac{\Vol_{g_0}B_{g_0}(x, C_{x,j}\inv r)}{ r^n}\geq  \frac{1}{2}\frac{\Vol_{g_0}B_{g_0}(x, C_{x,j}\inv r)}{(C_{x,j}\inv r)^n}\geq \frac{ \omega_n}{2} v,
\]
which proves \eqref{approximationunboundedthmcond4a}. Now it remains to show \eqref{approximationunboundedthmcond4b}. Just as above, we may obtain an upper volume ratio bound:
\[
v_{xjr}\leq C_{x,j}^n \frac{\Vol_{g_0}B_{g_j}(x, r)}{r^n}\leq C_{x,j}^n \frac{\Vol_{g_0}B_{g_0}(x, c_{x,j}\inv r)}{ r^n}\leq  2 \frac{\Vol_{g_0}B_{g_0}(x, c_{x,j}\inv r)}{(c_{x,j}\inv r)^n}.
\]
Because $\Rc_{g_0}\geq 0$ and $C_{x,j}\inv r \leq c_{x,j}\inv r$, we may once again use volume comparison to say that 
\[
 \frac{1}{2}\frac{\Vol_{g_0}B_{g_0}(x, c_{x,j}\inv r)}{(c_{x,j}\inv r)^n}\leq v_{xjr}\leq 2\frac{\Vol_{g_0}B_{g_0}(x, c_{x,j}\inv r)}{(c_{x,j}\inv r)^n} \; \iff \; \frac{\Vol_{g_0}B_{g_0}(x, c_{x,j}\inv r)}{(c_{x,j}\inv r)^n}\in \left[\frac{1}{2}v_{xjr}, 2v_{xjr}\right].
\]
Now by Lemma \ref{SobfromAVR2}, we obtain the Sobolev inequality 
\[
\Sob(B_{g_0}(x,rc_{x,j}\inv), g_0)\leq \left[C_1(n) \left(\frac{v_{xjr}}{2}\right)^{-\frac{2}{n}},0\right]
\]
for some dimensional constant $C_1(n)\geq 1$. Since $B_{g_j}(x,r)\subset B_{g_0}(x,r c_{x,j}\inv)$, we may again apply the locally equivalent condition \eqref{unboundedcurvuniformlyequivalent} to obtain Sobolev inequality for $g_j$:
\ba
\left(\int_{B_{g_j}(x,r)}|u|^{\frac{2n}{n-2}}\,dV_{g_j}\right)^{\frac{n-2}{n}}&\leq 
\left(\frac{C_{x,j}}{c_{x,j}}\right)^{n-2} C_1(n) \left(\frac{v_{xjr}}{2}\right)^{-\frac{2}{n}} \int_{B_{g_j}(x,r)} |\nabla^{g_j} u|^2_{g_j} \,dV_{g_j}\\&\leq 
2 C_1(n) v_{xjr}^{-\frac{2}{n}} \int_{B_{g_j}(x,r)} |\nabla^{g_j} u|^2_{g_j} \,dV_{g_j}
\end{align*}
for all $u\in W_0^{1,2}(B_{g_j}(x,r))$. This gives \eqref{approximationunboundedthmcond4b} and completes the proof.  
\end{proof}

\vspace{5pt}

\begin{proof}[Proof of Theorem \ref{main2}]
Given $n\geq 4$ and $v\in (0,1]$, let $(M^n,g)$ satisfy the hypotheses of Theorem \ref{main} with this choice of $n,v$ and fix some $x_0\in M$. For any $R\geq 1$, denote $g_R=R^{-2}g$, which still satisfies $\AVR(g_R)\geq v$ and 
\[
\int_M |\Rm|_{g_R}^{\frac{n}{2}}\,dV_{g_R}\leq \ep(n) v.
\]
Then by Proposition \ref{approximationunboundedthm}, we obtain a sequence of sets $U_{R,j}\subset  M$ exhausting $M$ and complete bounded curvature metrics $g_{R,j}$ on $U_{R,j}$ which satisfy $g_{R,j}=g_R$ on $B_{g_R}(x_0, 1)$. Moreover each of the following conditions holds uniformly for every $x\in U_{R,j}$:
\begin{enumerate}[(i)]
\item $\Rc_{g_{R,j}}(x)\geq -1$;  
\item $\Vol_{g_{R,j}} B_{g_{R,j}}(x,2r)\leq 2^{2n+1}  \Vol_{g_{R,j}} B_{g_{R,j}}(x,r)\leq 2^{2n+2} \omega_n r^n$ for each $r\leq 4$;  
\item $\int_{B_{g_{R,j}}(x,4)}|\Rm|_{g_{R,j}}^{\frac{n}{2}}\,dV_{g_{R,j}}\leq 4\ep(n) v$;
\item $\frac{\Vol_{g_{R,j}}B_{g_{R,j}}(x, r)}{r^n}\geq \frac{\omega_n}{2}v$ for each $r\in (0,1]$; and\label{proofofmain2cond4}
\item $\Sob(B_{g_j}(x,r),g_j)\leq \left[C_1(n)\left(\frac{\Vol_{g_j}B_{g_j}(x, r)}{r^n}\right)^{-\frac{2}{n}},0\right]$ for each $r\in (0,4]$\label{proofofmain2cond5}.
\end{enumerate}
Writing $v_0=\frac{\omega_n}{2}v$, we replace condition \eqref{proofofmain2cond4} with 
\[
v_{xRjr}:=\frac{\Vol_{g_{R,j}}B_{g_{R,j}}(x, r)}{r^n}\geq v_0 \;\text{ for each } r\in (0,1]\tag{iv'}, 
\]
and replace condition \eqref{proofofmain2cond5} with 
\[
\Sob(B_{g_j}(x,4r),g_j)\leq \left[4^{2}C_1(n)v_{xRjr}^{-\frac{2}{n}},0\right] \;\text{ for each } r\in (0,1]\tag{v'}.
\]
If we write $\lambda=4^2 C_1(n)$ and define
\be\label{epsilonboundedintermsofdelta}
\ep(n)= \frac{\delta(n,\lambda) \omega_n}{8},
\ee
then we may apply Theorem \ref{shorttimeexistence} to each metric $g_{R,j}$. Taking the diagonal scale normalized sequence $g_j=j^2 g_{j,j}$, the result is a sequence of flows $g_j(t)$, $t\in [0, j^2\widetilde T]$ such that $g_j(0)=g$ on $B_g(x_0,j)$ and for each $j\in\N$, all of the following are satisfied for every $(x,t)\in M\times [0,j^2\widetilde T]$:
\begin{enumerate}[(i)]
\item  $\sR_{g_j(t)}(x)\geq -nj^{-2} $;  
\item $|\Rm|_{g_j(t)}(x)\leq  \frac{1}{t}$;
\item $\inj_{g_j(t)}(x) \geq \iota(n,v) \sqrt{t}$; 
\item $\Vol_{g_j(t)}B_{g_j(t)}(x, r)\geq c_1(n) v\, r^n$ for all $r\in (0,j]$;
\item $\int_{B_{g_j(t)}(x,j)}|\Rm|_{g_j(t)}^{\frac{n}{2}}\,dV_{g_j(t)}\leq \Lambda(n,\lambda)\delta(n,\lambda) v_0$; and  
\item $\Sob(B_{g_j(t)}(x,j), g_j(t))\leq [C_2(n) \lambda v_0^{-\frac{2}{n}} ,\frac{1}{j^2}]$.
\end{enumerate}
Then just as in the proof of {\cite[Theorem 1.1]{CHL}}, we may apply {\cite[Corollary 3.2]{BingLongChen}}, Shi's modified interior estimates {\cite[Theorem 14.16]{Chowetal}}, the Arzel\`a-Ascoli Theorem, and a diagonal argument, we may conclude that some subsequence of $g_j(t)$ converges in $C^\infty_{loc}(M\times [0,\infty))$ to a long-time Ricci flow $g(t)$ with $g(0)=g$ satisfying the following for all $t\in [0,\infty)$:
\begin{enumerate}[(i)]
\item $\sR_{g(t)}\geq 0 $; 
\item $|\Rm|_{g(t)}\leq  \frac{1}{t}$;
\item $\inj_{g(t)} \geq \iota(n,v) \sqrt{t}$; 
\item $\Vol_{g(t)}B_{g(t)}(x_0, r)\geq c_1(n) v\, r^n$ for all $r\in (0,\infty)$;
\item $\int_M |\Rm|_{g(t)}^{\frac{n}{2}}\,dV_{g(t)}\leq \Lambda(n,\lambda)\delta(n,\lambda) v_0$; and  
\item $\Sob(M, g(t))\leq [C_2(n) \lambda v_0^{-\frac{2}{n}} ,0]$.
\end{enumerate}
Finally, obtaining the faster than $1/t$ curvature decay follows by applying {\cite[Theorem 1.2]{CM}} to the bounded curvature metric $g(1)$, which is validated by our smallness assumption on $\delta(n,\lambda)$ in \eqref{smallerthanCMresult}. This completes the proof of Theorem \ref{main2}. 
\end{proof}

\vspace{5pt}

\begin{remark}
By recalling our definitions of $\ep(n)$ in \eqref{epsilonboundedintermsofdelta}, of $\delta(n,\lambda)$ in \eqref{smallnessdeltaassumptionfixed}, and of $\Lambda(n,\lambda)$ in \eqref{boundonLambda}, as well as the fact that $\lambda\leq 32 (10n)^{3n}$ by Remark \ref{boundonSobfromvolconstant}, we obtain the explicit bound
\[
\ep(n)\geq \frac{\omega_n}{8}\left(\frac{1}{8\left(2^{3n+9} [32 (10n)^{3n}]^{\frac{n}{2}}(2^{n+4} 1000n)^{\frac{n}{2}}\right)^{2n+1}}\right) \left(\frac{1}{10^7 n^3 [32 (10n)^{3n}]}\right)^{\frac{n}{2}}\geq (10n)^{-10n^3}.
\]
\end{remark}

\vspace{5pt}

\textbf{Conflict of Interest:} The author declares that he has no conflict of interest.\\

\textbf{Ethics Approval:} The author declares that he has acted in accordance with the journal's ethical guidelines. \\

\textbf{Funding:} The author is partially supported by 4YF For PhD Students \#6456 at UBC.\\

\textbf{Data Availability:} Data sharing is not applicable to this article as no datasets were generated or analysed during
the current study.

\end{document}